\newtheorem{thm}{Theorem}
\newtheorem{lem}[thm]{Lemma}
\newtheorem{prop}[thm]{Proposition}
\newenvironment{assump}[1][Assumption]{\noindent\textbf{#1}}{\ \rule{0em}{0em} }
\numberwithin{thm}{section}
\numberwithin{equation}{section}
\DeclareMathOperator{\1}{{\mathds 1}}
\DeclareMathOperator{\R}{{\mathbb R}}
\DeclareMathOperator{\Z}{{\mathbb Z}}
\DeclareMathOperator{\N}{{\mathbb N}}
\DeclareMathOperator{\supp}{supp}
\providecommand{\abs}[1]{\lvert #1 \rvert}
\providecommand{\norm}[1]{\lVert #1 \rVert}
\providecommand{\bnorm}[1]{{\Bigl\lVert #1 \Bigr\rVert}}
\renewcommand{\le}{\leqslant}
\DeclareMathOperator{\Log}{Log}
\providecommand{\qsub}{  \!\!\!\!\!\!\!\! }
\providecommand{\qqsub}{ \qsub \qsub \qsub \qsub }
\providecommand{\FT}{{\cal F}}
\providecommand{\FTT}[1]{{\cal F} \left[ #1\right]}
\providecommand{\FI}{{\cal F}^{-1}}
\providecommand{\FII}[1]{{\cal F}^{-1} \left[ #1 \right]}
\DeclareMathOperator{\Deltaup}{{\mathrm{\Delta}}}
\begin{document}

\title{Efficient nonparametric inference for discretely observed compound Poisson processes}

\author{Alberto J. Coca \footnote{The author is grateful to Richard Nickl for the numerous discussions leading to the completion of this work, to the anonymous referees for their insightful remarks and to \textit{Fundaci\'on ``La Caixa''}, \textit{EPSRC} (grant EP/H023348/1 for the Cambridge Centre for Analysis) and \textit{Fundaci\'on Mutua Madrile\~na} for their generous support. } \\
\\ 
\textit{University of Cambridge \footnote{Statslab, Department of Pure Mathematics and Mathematical Statistics, University of Cambridge, CB30WB, Cambridge, UK. Email: a.coca-cabrero@maths.cam.ac.uk}} \\
\date{\today}}

\maketitle

\begin{abstract}

A compound Poisson process whose parameters are all unknown is observed at finitely many equispaced times. Nonparametric estimators of the jump and L\'evy distributions are proposed and functional central limit theorems using the uniform norm are proved for both under mild conditions. The limiting Gaussian processes are identified and efficiency of the estimators is established. Kernel estimators for the mass function, the intensity and the drift are also proposed, their asymptotic properties including efficiency are analysed, and joint asymptotic normality is shown.  Inference tools such as confidence regions and tests are briefly discussed.



\medskip

\noindent\textit{MSC 2010 subject classification}: Primary: 62G10; 
Secondary: 60F05, 
60G51, 62G05, 
62G15. 

\noindent\textit{Key words and phrases: uniform central limit theorem, nonlinear inverse problem, efficient nonparametric inference, compound Poisson process, jump distribution, discrete measure kernel estimator.}

\end{abstract}

\section{Introduction}\label{SecIntroduction}

Let $(N_t)_{t\geq 0}$ be a one-dimensional Poisson process with intensity $\lambda>0$ and let $Y_1, Y_2, \ldots$ be a sequence of independent and identically distributed real-valued random variables with common distribution $F$. Assume this sequence is independent of the Poisson process and let $\gamma \in \R$. Then
a one-dimensional compound Poisson process with jump distribution $F\!$, intensity $\lambda$ and drift $\gamma$ can be written as
\vspace{-0.05cm}
\begin{equation}\label{CPPFirst}
	X_t= \gamma t + \sum_{j=1}^{N_t} Y_j, \qquad t\geq 0,
	\vspace{-0.05cm}
\end{equation}
where an empty sum is zero by convention so, in particular, the process always starts at zero. 

Compound Poisson processes are used in numerous applications and in many of them some of the defining parameters, if not all, are unknown. Therefore statistical inference on them is of great interest. In some applications such as insurance or queuing theory the process $(X_t)_{t\geq 0}$ may be observed continuously up to some time $T$. Hence the jumps $Y_1, Y_2, \ldots, Y_{N_T}$, representing the size of customer claims or the size of groups arriving at a queue, are directly observed. Under this setting standard tools from statistics can be used to make inference on $F$ and $\lambda$. Conditional on $m=N_T$, the process $\sqrt{m} \, (F_m-F)$, where $F_m$ is the empirical distribution function of the jumps, is approximately an $F$-Brownian bridge in view of Donsker's theorem (cf. \cite{D99} and \cite{AvdVW96}). This allows construction of inference tools of practical importance such as global confidence bands around $F$, goodness-of-fit tests and two-sample tests. In other situations, $(X_t)_{t\geq 0}$ is not observed continuously but discretely. For example, in chemistry or storage theory only measurements every ${\Deltaup}>0$ amount of time may be available up to time $T$. Therefore we do not directly observe the jumps representing the impact of chemical reactions or the amount of a natural resource being lost or gained in a storage system. Instead we observe $X_{{\Deltaup}}, X_{2{\Deltaup}}, \ldots, X_{n{\Deltaup}}$, where $\Deltaup$ is fixed and $n=\lfloor T/\Deltaup \rfloor$. This is called the low-frequency observation scheme and when making inference on the parameters we are confronted with a statistical inverse problem.

In Section 3.2 of \cite{NR12}, Nickl and Rei{\ss} conjecture that in the second setting it should be possible to develop a result similar to Donsker's theorem, and this is the main contribution of our results. We construct kernel-based estimators $\widehat F_n$ and $\widehat{N}_n$ of $F$ and $N:=\lambda F$ and prove that as $n\to \infty$
\begin{equation*}
	\sqrt{n} \,  \Big(\widehat{F}_n - F \Big)  \to^{\mathcal{L}} \mathbb G^{F} \, \, \mbox{ in } \ell^{\infty}(\R) \quad \mbox{ and } \quad \sqrt{n} \, \Big(\widehat{N}_n - N \Big)  \to^{\mathcal{L}} \mathbb B^{N} \, \, \mbox{ in } \ell^{\infty}(\R),
\end{equation*}
where $\mathbb G^{F}$ and $\mathbb B^{N}$ are centred Gaussian Borel random variables in the Banach space of uniformly bounded functions on $\R$, and $\to^{\mathcal{L}}$ denotes convergence in law in that space. We identify their covariance structures and these coincide with the information-theoretic lower bounds developed by Trabs in \cite{T15}, so our estimators are asymptotically efficient. The inverse nature of the problem translates into convoluted expressions for the covariances. However, the limiting processes can be shown to be of Brownian bridge and of Brownian motion type, respectively. When $\lambda {\Deltaup}$ is small, the jumps $Y_1, Y_2, \ldots$ are more likely to be observed and, in particular, we show that the covariances can be approximated by
\begin{equation*}
	\frac{1}{\lambda {\Deltaup}} \big( F(\min\{s,t\}) - F(s) F(t)\big)  \quad \mbox{ and } \quad \frac{1}{{\Deltaup}} \, N(\min\{s,t\}), \quad s,t\in \R.
\end{equation*}
The first expression agrees with Donsker's theorem because $\lambda {\Deltaup} n$ is the expected number of jumps of the compound Poisson process up to time $T$. Intuitively, the second limiting process is of Brownian motion type due to the stochasticity $\widehat{N}_n$ carries through the lack of knowledge of $\lambda$. 


In our proofs $F$ may have an absolutely continuous component with a mild uniform logarithmic modulus of continuity. We also allow it to have a discrete component and assume it has no atom at the origin, which does not pose any modelling sacrifice and ensures identifiability of the model. In turn, it makes $X$ a L{\'e}vy process and guarantees there is a notion of efficiency as shown in \cite{T15}. We will therefore work with the increments $Z_k:=X_{k{\Deltaup}}-X_{(k-1){\Deltaup}}$, $k=1, \ldots, n$, and exploit that they are independent and identically distributed with the same distribution as $X_{\Deltaup}$. On the discrete component we further assume it is supported in $\varepsilon \times \mathbb{Z}\setminus\{0\}$ for some $\varepsilon>0$ fixed. Therefore, exact cancellations between jumps in the expression for $X_{\Deltaup}$ may occur: e.g. if $\gamma=0$, zero increments do not necessarily correspond to no jumps in the corresponding $\Deltaup$-time interval. This type of setting has not yet been considered in the related literature. The kernel estimators $\widehat F_n$ and $\widehat{N}_n$ include a novel and general way to estimate the mass function of the discrete component from which simple tests for the presence of each component immediately follow. 
We also propose new estimators for the intensity and the drift, analyse their asymptotic properties including efficiency and show joint asymptotic normality of all the estimators. 

We construct the estimators via the so-called spectral approach introduced by Belomestny and Rei{\ss} in \cite{BR06}. The characteristic function of the increments, $\varphi$, is related to the jump distribution $F$ through a nonlinear relationship. This can be inverted with the aid of a suitable band-limited kernel and the estimators follow by plugging-in the empirical counterpart of $\varphi$. As hinted at by \eqref{CPPFirst}, after linearisation the problem is of deconvolution type, where the law of the error is that of the increments themselves. Nickl and Rei{\ss} \cite{NR12} call it an auto-deconvolution problem following the work of Neumann and Rei{\ss} in \cite{NR09}, where the ill-posedness of the problem for general L\'evy processes was first studied. 
Then, in the context of compound Poisson processes the non-linear inverse problem is not ill-posed in terms of convergence rates because $\inf_{u\in \R} |\varphi(u) |>0$. This justifies why consistent estimators at rate $1/\sqrt{n}$ exist and it implies the bandwidth $h_n$ of the kernel can decay exponentially with $n$. The regularity of the problem is also reflected in the rate of convergence of our estimator of the drift $\gamma$, which equals $h_n/\sqrt{n}$. This rate may at first appear surprising but stems from the fact that for continuous observations we can perfectly learn the drift (it is the slope of the observed process in the region with no jumps), and that the low-frequency regime is only marginally ill-posed in terms of information loss.


The first to consider the problem above were Buchmann and Gr{\"u}bel in \cite{BG03}, who coined it \textit{decompounding}. They assumed the unknown process at hand has strictly positive jumps and no drift, and treated the case of discrete and general jumps separately, assuming $\lambda$ is unknown and known, respectively. They constructed estimators of the probability mass function and of $F$ starting from the convolution-type relationship relating the distributions of $X_{\Deltaup}$ and $Y$ arising from \eqref{CPPFirst} and showed that their estimators of $F$ satisfy functional central limit theorems under an exponentially weighted sup-norm. This is the usual sup-norm only when $\lambda \Deltaup<\log(2)\approx 0.69$ and, otherwise, the exponential weight translates into estimation of $F$ only in a bounded set around the origin, thus not addressing what occurs in the tail. As shown by Coca \cite{C16}, the limiting process in the discrete case is $\mathbb{G}^{F}$ whilst in the general case it is not and their general estimator is not efficient. 
They overcame some practical limitations of their discrete estimator in \cite{BG04}.

After the work in \cite{NR09} and based on the techniques introduced by Gin\'e and Zinn \cite{GZ84} and further developed by Gin\'e and Nickl \cite{GN08}, Nickl and Rei{\ss} \cite{NR12} proved a functional central limit theorem for a class of L{\'e}vy processes with Blumenthal--Getoor index zero. The associated low polynomial decay of the characteristic function of the increments means that this is effectively the largest class for which the ill-posedness of the inverse problem allows this type of theorem to hold in the low-frequency observation scheme. Due to the potential non-integrable singularity at the origin in the L{\'e}vy measure, $N$ is only well-defined in $\R^-$, so for $\R^+$ the integral of the right tail should be considered instead. Therefore, they estimate this generalised L{\'e}vy distribution 
in the norm $L^{\infty}((-\infty, -\zeta] \cup [\zeta,\infty))$, for some $\zeta>0$ fixed. Their limiting Gaussian process is optimal as shown in \cite{T15}, and their estimator is efficient. Their result applies to compound Poisson processes but does not address estimation around the origin. 
Nevertheless, their ideas are the starting point to prove our results, 
which imply that in the compound Poisson case an estimator based on $\varphi$ and not on any of its derivatives can estimate $F$ and $N$ efficiently. With this estimator we can also relax their finite second moment assumption to a mild logarithmic one. 

Other functional central limit theorems in related settings have been developed. In the high-frequency regime, ${\Deltaup}={\Deltaup}_n\to 0$ and $n {\Deltaup}_n \to \infty$, the inverse nature of the problem vanishes as the number of observations grows. Consequently, Nickl, Rei\ss, S\"ohl and Trabs \cite{NRST16} prove a Donsker type of theorem on $\R$ for functionals of the L{\'e}vy measure for a much larger class of L\'evy processes that may carry a diffusion component. They also derive goodness of fit tests complementing the work of Rei{\ss} \cite{R13} on testing L\'evy processes. Previously Buchmann \cite{B09} showed a functional central limit theorem with an exponentially weighted sup-norm using continuous, although perhaps incomplete, observations. When the assumption of exponentially distributed interarrival times in \eqref{CPPFirst} is dropped, B{\o}gsted and Pitts \cite{BP10} show that the results in \cite{BG03} still hold. 

Nonparametric estimation of the jump density of compound Poisson processes and superclasses of them has also received a great deal of attention recently. Estimation of the former was initiated by van Es, Gugushvili and Spreij \cite{VEGS07}, who construct an estimator by inverting the characteristic function of the increments when low-frequency observations are available. Using the same idea Gugushvili \cite{G09,G12} subsequently extends the results in several directions, including adding a diffusion component to the observed process. Chen, Delaigle and Hall \cite{CDH10} generalise the noise component to a symmetric stable process, also estimate the distribution function under the sup-norm and obtain minimax rates of the problem complementing those in \cite{NR09}. Using prices of financial options and assuming the logarithm of the prices of financial assets is a compound Poisson process with a diffusion component, \cite{BR06,CT04,S14,ST14} perform density estimation and develop confidence sets. Also using the spectral approach, Kappus \cite{K14} adaptively estimates the jump density of a L{\'e}vy process. Starting from the convolution relationship arising from \eqref{CPPFirst}, Duval \cite{D13a} performs adaptive density estimation for compound Poisson processes in the high-frequency setting, Comte, Duval and Genon-Catalot \cite{CDGC14} deal with low-frequency observations and Duval \cite{D14} studies the identifiability and estimation of these processes when ${\Deltaup}={\Deltaup}_n \to \infty$. Duval \cite{D13b} extends the first result to renewal reward process and Comte, Duval, Genon-Catalot and Kappus \cite{CDGCK15} extend it to mixed compound Poisson process. We refer the reader to \cite{BCGCMR15} for a recent and exhaustive account of inference on L\'evy processes. More recently, Gugushvili, van der Meulen and Spreij \cite{GMS15a,GMS15b} make Bayesian inference on the density for multidimensional compound Poisson processes from low- and high-frequency observations, respectively.

This paper is divided into two main blocks: Sections \ref{SecMainResults} and \ref{SecProofs}. The first is split into three subsections introducing the notation, the precise assumptions and the estimators, and the main results. The main results are divided into three subsections about marginal estimation, efficiency remarks and joint estimation. Section \ref{SecProofs} is devoted to the proofs starting with auxiliary results, of which some may be of independent interest, and ending with the proofs of the main results.

%
%
%
%
%
%

\section{Main results} \label{SecMainResults}

\subsection{Definitions and notation} \label{SecDefNot}

Denote by $(\Omega, \mathcal{A}, \Pr)$ the probability space on which all the stochastic quantities herein are defined. We define $P$ to be the law of $X_{\Deltaup}$ or, equivalently, the common law of the independent increments $Z_k:=X_{k{\Deltaup}}-X_{(k-1){\Deltaup}}$, $k=1, \ldots, n$. Let $P_n:=n^{-1} \sum_{k=1}^n \delta_{Z_k}$ be the empirical measure of the increments and denote by ${\cal F}$ and ${\cal F}^{-1}$ the Fourier (--Plancherel) transform and its inverse acting on finite measures or on $L^2(\R)$ functions (the reader is referred to \cite{F99} for the results from Fourier analysis and convolution theory used throughout). Then for all $u\in \R$
\begin{equation*}
	\varphi(u) := \FT P(u):= \int_{\R} e^{i u x}P(dx) \qquad \mbox{and} \qquad \varphi_n(u):=\FT P_n(u):= \frac{1}{n} \sum_{k=1}^n e^{i u Z_k}
\end{equation*}
are the characteristic function of $Z$ and its empirical counterpart. A simple calculation starting from the representation for $X_{\Deltaup}$ obtained from expression \eqref{CPPFirst} shows that
\begin{equation}\label{eqLevyKhintchineCPP}
	\varphi(u) =\exp\big({\Deltaup} \left( i\gamma u + \FT \nu (u) -\lambda \right) \big),
\end{equation}
where $\nu$ is a finite measure on $\R$ referred to as the L{\'e}vy measure satisfying that for any $t\in \R$
\begin{equation*}
	\int_{-\infty}^t \nu(dx) = \lambda \, F(t) =: N(t).
\end{equation*}
The precise assumptions on $\nu$ are included in the next section. For now let us mention we assume
\begin{equation}\label{LebDecNu}
	\nu=\nu_d + \nu_{ac},
\end{equation}
where $\nu_d$ and $\nu_{ac}$ are finite measures that are discrete and absolutely continuous with respect to Lebesgue's measure, respectively. The former satisfies that
\begin{equation}\label{Nud}
	\nu_d=  \sum_{j\in \mathcal{J}} q_j \delta_{J_j}, 
\end{equation}
where $\mathcal{J}$ is countable, $q_j \in [0,\lambda]$ for all $j\in \mathcal{J}$ 
and $q:= \sum_{j\in \mathcal{J} } q_j \in [0,\lambda]$. In order to work with the actual weights of the jump measure we define for all $j\in \mathcal{J}$
\begin{equation}\label{ps}
	p_j:= \frac{q_j}{\lambda} \in [0,1] \qquad \mbox{and} \qquad p:=\frac{q}{\lambda} \in [0,1].
\end{equation}

Throughout we write $A_c\lesssim B_c$ if $A_c\le C B_c$ holds with a uniform constant $C$ in the parameter $c$ and $A_c\thicksim B_c$ if $A_c\lesssim B_c$ and $B_c\lesssim A_c$. We use the standard notation $A_n = O(B_n)$ and $A_n = o(B_n)$ to denote that $A_n/B_n$ is bounded and $A_n/B_n$ vanishes, respectively, as $n\to \infty$. We write $\xi_n=O_{\Pr}(r_n)$ to denote that $\xi_n r_n^{-1}$ is bounded in $\Pr$-probability. The natural norm of the space $L^r(\R)$, $r\in [1,\infty]$, is denoted by $\norm{\cdot}_r$ and the total variation of signed measures by $\norm{\cdot}_{TV}$.

In Section \ref{SecCLT} convergence in distribution of real-valued random variables is denoted by $\to^d$. The space of bounded real-valued functions on $\R$ equipped with the supremum norm is denoted by $\ell^\infty(\R)$ and $\to^\mathcal L$ denotes convergence in law in it (cf. \cite{D99}, p.94). Convergence in law in the product space $\R^{\N} \times \ell^\infty(\R)^2$ is denoted by $\to^{{\mathcal L}^{\times}}$ (cf. Chapter 1.4 in \cite{AvdVW96}).

\subsection{Assumptions and estimators} \label{SecSettingEstimators}

\begin{assump} \label{Assumption1}
	On the unknown L{\'e}vy measure $\nu$ we make the following assumptions:
	\begin{enumerate}[label= \arabic*]
		\item its Lebesgue decomposition is as in \eqref{LebDecNu}, satisfying that \label{enum:th:ass1}
		\begin{enumerate} 
			\item $\nu_d$ is given by \eqref{Nud}, where for some $\varepsilon>0$ fixed, $J_j=\varepsilon j $ for all $j \in \mathcal{J}:= \Z \setminus \{0\}$
			; and, \label{enum:th:ass1a}
			\item for any $s,t\in\R$ distinct, $| \int_s^t \nu_{ac}(x)\, dx| \lesssim \min\{(\log(|s - t|^{-1}))^{-\alpha}, 1\}$ for some $\alpha>4$. And,\label{enum:th:ass1b}
		\end{enumerate}
		\item $\int_{\R} \log^{\beta}\big(\max\{|x|,e\}\big) \,\nu(dx)<\infty$ for some $\beta>2$.\label{enum:th:ass2}
	\end{enumerate}
\end{assump}

Note that $\nu$ may be fully discrete or fully absolutely continuous, which is determined by the (unknown) value of $p$ defined in \eqref{ps}. We assume $\nu$ has no atom at the origin as otherwise it merges with the intensity and neither can be identified. 
The assumption of $\supp(\nu_d)\subseteq \varepsilon \times \Z \setminus\{0\}$ 
can be relaxed to a regular non-equispaced support condition and the results herein follow simply by making cosmetic changes to our proofs. However,  we present it as above for notational simplicity and refer the reader to the end of Remark 3.2.1 in \cite{C16} for more details. Assumption \ref{enum:th:ass1b} is satisfied if $\nu_{ac}\in L^{1+\epsilon}(\R)$ for some $\epsilon>0$ with much to spare: by H\"older's inequality,
\begin{equation*}
	\left|\int_{s}^t \nu_{ac}(x)\, dx \right| \leq \min\left\{\norm{\nu_{ac}}_{1+\epsilon} \, |s-t|^{\epsilon/(1+\epsilon)},\lambda\right\} \quad \mbox{ for any } s,t \in \R.
\end{equation*}

A kernel function $K$ features in the estimators and we assume it is symmetric and it satisfies
\begin{equation}\label{conditionsK}
	\int_{\R}K(x) \, dx=1, \quad \supp({\cal F}K)\subseteq[-1,1] \quad \mbox{and}  \quad \abs{K(x)}\lesssim (1+\abs{x})^{-\eta}  \text{ for some } \eta>2.
	\vspace{-0.1cm}
\end{equation}
Therefore, the functions $K_{h_n}:= h_n^{-1} K(\cdot/ h_n)$, where $h_n$ is referred to as the bandwidth, are continuous, have Fourier transform $\FT K_{h_n}$ supported in $[-h_n^{-1},h_n^{-1}]$ and provide an approximation to the identity operator as $h_n\to 0$ when $n\to \infty$. 

We now heuristically motivate the form of our estimators using \eqref{eqLevyKhintchineCPP}. Taking logarithms and, formally, inverse Fourier transforms, and integrating against a regular enough function $f: \R \to \R$,
\begin{align}\label{eqHeuristicsEstimators}
	\frac{1}{{\Deltaup}} \! \int_{\R} \! f \, \FII{\Log (\varphi) } \!=\! \int_{\R} \! f \, d\big[ \!- \!\gamma  ( \delta_0)' \!-\! \lambda  \delta_0 \!+\! \nu \big] \!=\! \gamma f'(0) \!  - \! \lambda f(0) \!+\!\!\!\! \sum_{j\in  \Z\setminus\{0\}} \!\!\!\!q_j   f(J_j) \!+\! \int_{\R} \! f \, d\nu_{ac}.
	\vspace{-0.3cm}
\end{align}
Here, $\Log(\varphi)$ denotes the distinguished logarithm of $\varphi$, i.e. the unique continuous function satisfying $\exp \!\big(\!\Log(\varphi)(u)\big)\!=\!\varphi(u)$ for all $u \!\in\! \R$ (cf. Theorem 7.6.2 in \cite{C01} for its construction). 
Note that all of the summands on the right hand side are unknown, whilst $\varphi$ on the left hand side can be estimated by $\varphi_n$. Thus, the idea is first to write each parameter solely in terms of $\varphi$ by choosing $f$ such that the rest of the summands are zero or vanish asymptotically, and second to justify the substitution by $\varphi_n$. 
In the proofs, the role of $\delta_0$ is played by $K_{h_n}$ so, despite $(\delta_0)'$ is to be understood in a distributional sense, $f$ does not have to be smooth in practice; it suffices that it is differentiable at the origin and uniformly bounded. Then, in view of the last display, due to Assumption \ref{enum:th:ass1b} and taking $\zeta_n\to 0$ exponentially and sufficiently fast depending on $\alpha$,
\begin{align}\label{identitygammalambda}
	\gamma = \lim_{n\to \infty} \frac{1}{{\Deltaup}}  \int_{\R} x \1_{  |x|< \zeta_n } \FII{\Log (\varphi) }(dx), \quad \lambda= - \lim_{n\to \infty} \frac{1}{{\Deltaup}}  \int_{\R} \1_{ |x|< \zeta_n } \FII{\Log (\varphi) }(dx), 
\end{align}
\vspace{-0.3cm}
\begin{align}\label{identityqjs}
	q_j= \lim_{n\to \infty} \frac{1}{{\Deltaup}}  \int_{\R} \1_{ |x-J_j|< \zeta_n } \FII{\Log (\varphi) }(dx), \quad j \in \Z \setminus \{0\},
	\vspace{-0.3cm}
\end{align}
and
\begin{equation*}\label{identitynuac}
	\vspace{-0.1cm}
	\int_{-\infty}^t \nu_{ac}= \lim_{n\to \infty} \frac{1}{{\Deltaup}}  \int_{-\infty}^t \1_{|x| \geq  \zeta_n } \Big( 1  - \sum_{ j\in \Z \setminus \{0\} } \1_{   |x-J_j|< \zeta_n   }  \Big)  \FII{\Log (\varphi)}(dx), \quad t\in \R.
	\vspace{-0.1cm}
\end{equation*}
Noticing that $N(t)=\int_{-\infty}^t \nu$ and that $\lambda$ is the mass of $\nu$, we also have that
\begin{equation*}\label{identitynulambda}
	N(t) =  \lim_{n\to \infty} \frac{1}{{\Deltaup}} \int_{-\infty}^t \! \1_{|x| \geq \zeta_n }  \FII{\Log (\varphi)}(dx) \quad \mbox{and} \quad \lambda= \lim_{n\to \infty} \frac{1}{{\Deltaup}} \int_{\R}  \1_{|x| \geq \zeta_n }  \FII{\Log (\varphi)}(dx).
\end{equation*}
This is one of the advantages of the spectral approach: it is possible to untangle and isolate the influence that each parameter has on the characteristic function of the observations and to invert it. In turn, it also shows that the model lacks no identifiability. In particular, the truncation around the origin in the last three identities has an intuitive meaning: it kills the influence of $\gamma$ and $\lambda$ coming from the observations $Z_k$ that carry empty sums; it therefore corresponds to the practice found in the rest of the literature (in which $\gamma=0$) of throwing away the zero observations; remarkably, it does this automatically and, more importantly, it only throws away those coming from empty sums as shown by \cite{C16} in Section 2.4.1. Hence, this property is key in dealing with the novel setting we consider here in which $\nu_d$ may result in jumps that cancel each other and hide in the observations since, for the user, these are indistinguishable from the former. 

The next steps are to introduce $\varphi_n$ in place of $\varphi$ in the identities above, to give the final expressions for the estimators and to justify in what sense they are well-defined. We start with $\lambda$ and remark that, as shown in \cite{C16}, the estimators of it constructed from the last identity and from \eqref{identitygammalambda} are equal up to terms vanishing exponentially fast as $n\to \infty$ and all the conclusions herein do not depend on which is used. We construct it from the last display and propose
\begin{equation} \label{eqLambdahat}
	\hat \lambda_{n} := \frac{1}{{\Deltaup}} \int_{\R} f^{(\lambda)}_n(x) \, \FII{ \Log(\varphi_n) \FT K_{h_n}} (x) \, dx, \qquad f^{(\lambda)}_n(x):=\1_{  \varepsilon_n\leq |x|\leq H_n  },
\end{equation}
where $\varepsilon_n \to 0$ and $H_n \to \infty$ are determined later. For now we remark that we will need $h_n=o(\varepsilon_n)=o(n^{-\vartheta})$ for any $\vartheta>0$ so that $K_{h_n}$ converges to $\delta_0$ sufficiently fast for the arguments leading to the identities above to hold. In the implementation of $\hat \lambda_{n}$ and the estimators below, the truncation of the tails of the integral is natural, and, in the proofs, it allows us to control the errors under Assumption \ref{enum:th:ass2} instead of under a finite polynomial moment condition. The last display, and the rest of the estimators, is well-defined in sets of $\Pr$-probability approaching $1$ as $n\to \infty$ for the following reasons: in our proofs we show that, under Assumption \ref{enum:th:ass2} and if $h_n^{-1}$ does not grow too fast, $\sup_{u\in[-h_n^{-1},h_n^{-1}]} |\varphi_n(u) -\varphi(u)| \to 0$ in $\Pr$-probability; in view of \eqref{eqLevyKhintchineCPP},
\begin{equation}\label{eq:boundvarphi}
	0< e^{-2 \lambda {\Deltaup} }  \leq |\varphi(u)| \leq 1 \quad \mbox{ for all } u\in \R,
\end{equation}
so, in the above-mentioned sets, 
$\Log(\varphi_n) \FT K_{h_n}$ is well-defined and in $L^{r}(\R)$ for any $r\!\in\![1,\infty]$; furthermore, 
$f^{(\lambda)}_n \! \in \! L^{r}(\R)$ for any $r\!\in\![1,\infty]$, and by basic Fourier analysis arguments the estimator is well-defined in such sets. 
To estimate the drift we propose
\begin{equation*}\label{eqMuhat}
	\hat \gamma_n :=  \frac{1}{ {\Deltaup}} \int_{\R} f^{(\gamma)}_n (x) \, \FII{ \Log(\varphi_n) \FT K_{h_n}} (x) \, dx, \qquad f^{(\gamma)}_n (x) :=c^{-1} x \1_{  |x|< h_n },
\end{equation*} 
where $c:= 2 \, (\int_{[0,1]}K(x) \, dx - K(1) )$ is assumed to be distinct from zero. In view of \eqref{identitygammalambda}, here we have chosen $\zeta_n=h_n$, which is the speed at which $K_{h_n}$ converges to $\delta_0$. This equal speed results in the appearance of the unusual constant $c$ and, through the $x$ term in $f^{(\gamma)}_n$, in the rate of convergence $h_n/\sqrt{n}$ which can therefore not be improved upon as otherwise $K_{h_n}$ does not converge to $\delta_0$ fast enough. For any $j\in \Z \setminus\{0\}$ we estimate the $j$-th weight of $\nu_d$ by
\begin{equation*}\label{eqqhat}
	\hat q_{j,n} :=  \frac{1}{{\Deltaup}} \int_{\R} f^{(q_j)}_n(x) \, \FII{ \Log(\varphi_n) \FT K_{h_n}} (x) \, dx, \qquad f^{(q_j)}_n(x):=\1_{ |x-J_j|< \varepsilon_n },
\end{equation*} 
and thus the parameter $q$ can be estimated by
\begin{align*}
	\hat q_n := \sum_{\substack{|j| \leq \widetilde H_n/\varepsilon \\ j \neq 0}} \hat q_{j,n} = \frac{1}{{\Deltaup}} \int_{\R} f^{(q)}_n(x) \, \FII{ \Log(\varphi_n) \FT K_{h_n}} (x) \, dx, \quad f^{(q)}_n(x):=\sum_{\substack{ |j| \leq \widetilde H_n/\varepsilon \\ j \neq 0}} \1_{  |x-J_j| < \varepsilon_n   },
	\vspace{-0.2cm}
\end{align*}
where $\widetilde{H}_n=o(H_n)\to \infty$ for technical reasons. To estimate $N(t)$ it is tempting to take
\begin{align}\label{eqNhatAC}
	\frac{1}{{\Deltaup}} \int_{-\infty}^t   \1_{  \varepsilon_n\leq |x|\leq H_n  } \, \FII{  \Log(\varphi_n) \FT K_{h_n }}(x) \,  dx \quad \mbox{for any } t\in\R.
\end{align}
Due to the regularisation induced by $K$, this quantity is continuous in $t$ in the aforementioned sets. 
However, if $\nu_d$ is not null, the function $N$ is discontinuous. Hence, and given that we seek a limit theorem under the uniform norm, we need to make the estimator discontinuous. The general idea we propose is to estimate $N(t)$ by the last display everywhere except at increasingly small intervals around each of the potential jumps; in these, the estimate is kept constant as $t$ grows until it equals the potential atom; at that point, the estimate of its mass (equal to the estimated mass in the interval) is added and the estimator remains constant until the end of the interval. Assuming $|H_n- J_j|>\varepsilon_n$ for all $j\in \Z\setminus\{0\}$ and $\varepsilon_n<\varepsilon/2$ without loss of generality, 
we therefore propose the estimator
\begin{align*} 
	\widehat{N}_n (t) := \frac{1}{\Deltaup} \int_{\R}  f^{(N)}_{t,n} (x) \, \FI \big[ \Log(\varphi_n) \FT K_{h_n} \big] (x) \, dx,
\end{align*}
where, defining $J_0:=0$ for notational simplicity,
\begin{equation}\label{defftn}
	f^{(N)}_{t,n}= \1_{[-H_n, H_n] \setminus (-\varepsilon_n, \varepsilon_n)} \times \left\{  \begin{array}{lll}
		\1_{(-\infty, t]} & \mbox{if} \quad |t-J_j|>\varepsilon_n &  \mbox{for all } j \in  \Z, \\
		\1_{(-\infty, J_j-\varepsilon_n]}  &  \mbox{if}  \quad  J_j-\varepsilon_n \leq t < J_j  &  \mbox{for some } j \in  \Z, \\
		\1_{(-\infty, J_j+\varepsilon_n]}  &  \mbox{if}  \quad J_j \leq t \leq J_j+\varepsilon_n  &  \mbox{for some } j \in  \Z.
	\end{array}
	\right.
\end{equation}
This can be alternatively written as 
\begin{align} \label{eqNhat}
	\widehat{N}_n (t) \!:=\! \frac{1}{\Deltaup}\!  \int_{-\infty}^t \!\!\!\! \1_{   |x|\leq H_n   } \!\Big(  1 \!-\! \sum_{ j\in \Z} \!\1_{   |x-J_j|< \varepsilon_n   }\! \Big) \FI \big[ \Log(\varphi_n) \FT K_{h_n} \big] \!(x)  dx + \!\!\!\! \sum_{\substack{|j| \leq H_n/\varepsilon \\ j\leq t/\varepsilon, \, j \neq 0} } \!\!\!\! \hat q_{j,n},
	\vspace{-0.3cm}
\end{align}
where the first term is estimating the cumulative function of $\nu_{ac}$ and the second estimates the cumulative discrete component by adding a new estimate exactly at the value of each potential atom. It follows from our proofs that if one of the two components is zero, the corresponding term is asymptotically negligible uniformly in $t$. Thus, that term can be discarded if there is such a priori knowledge and the conclusions herein still follow under the same assumptions; nonetheless, if $\nu_d$ is null, it is simpler to use \eqref{eqNhatAC} directly. 
For further simplifications of the implementation of $\widehat{N}_n$, see Sections 4.3 and 4.4 in \cite{C16}, and for additional remarks about the transformation of \eqref{eqNhatAC} given by \eqref{defftn}, see Remark 3.2.1 in \cite{C16}. Finally, we define the rest of the estimators as
\begin{align}\label{eqRescEst}
	\widehat{F}_n  := \hat \lambda_n^{-1} \widehat{N}_n, \qquad \hat p_{j,n} := \hat \lambda_n^{-1} \hat q_{j,n}, \, \, \, j \in \Z\setminus\{0\}, \quad \mbox{ and } \quad \hat p_{n} := \hat \lambda_n^{-1} \hat q_{n}.
\end{align}
Note that, for $t$ negative enough $\widehat{F}_n(t)=0$ and, due to the expression for $\hat \lambda_n$ and the first one for $\widehat{N}_n$, $\widehat{F}_n(t)=1$ for $t$ large enough. We remark that, despite the appearance of distinguished logarithms and Fourier transforms, all the estimators are real valued for any $n$.

\subsection{Central limit theorems} \label{SecCLT}

Our main result concerns joint estimation of the parameters introduced above. However, to ease the exposition we first specialise it to each of them and interpret the results separately. 

\subsubsection{Marginal convergence of the estimators}

\begin{prop}\label{Proplambdamu}
	Suppose Assumptions \ref{enum:th:ass1} and \ref{enum:th:ass2} are satisfied for some $\alpha\!>\!4$ and $\beta\!>\!2$, and that $K\!$ satisfies \eqref{conditionsK}. Let $h_n \!\sim\! \exp(-n^{\vartheta_h})$,  $\varepsilon_n\! \sim\! \exp(-n^{\vartheta_\varepsilon})$ and $H_n \!\sim \!\exp(n^{\vartheta_H})$, where $1/\alpha\!<\!2\vartheta_\varepsilon\! \leq \!\vartheta_h \!<\! 1/4$ and $1/(2\beta) \!\leq \!\vartheta_H\!<\!\vartheta_h$. Then, under the notation at the end of Section \ref{SecDefNot}, we have that
	\begin{equation*}
		\sqrt{n} \, \Big( \hat \lambda_n - \lambda \Big) \to^d N\big(0, \sigma^2_\lambda \big) \quad \mbox{and} \quad \sqrt{n} \, h_n^{-1} \big( \hat \gamma_n - \gamma \big) \to^{\Pr} 0
	\end{equation*}
	as $n\to \infty$, where in what follows $N(0,\sigma^2)$ denotes a zero-mean normal distribution with variance $\sigma^2$ and, writing $f^{(\lambda)}:= \1_{\R\setminus \{0\}}$ and $\varphi^{-1}=1/\varphi$ throughout,
	\begin{align*}
		\sigma^2_\lambda:=  \frac{1}{{\Deltaup}^2}  \int_{\R}  \Big(  f^{(\lambda)} \! \ast \! {\cal F}^{-1} \left[ \varphi^{-1}(-\cdot) \right] (x)\Big)^2  P(dx).
	\end{align*}
\end{prop}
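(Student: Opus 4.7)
The strategy is to linearise the distinguished logarithm. On events $\Omega_n$ of $\Pr$-probability tending to one --- secured by the uniform estimate $\sup_{|u|\le h_n^{-1}}|\varphi_n(u)-\varphi(u)| = o_\Pr(1)$ obtained from standard concentration inequalities for the empirical characteristic function combined with Assumption \ref{enum:th:ass2} --- one can write
\begin{equation*}
\Log(\varphi_n) = \Log(\varphi) + (\varphi_n-\varphi)\varphi^{-1} + R_n, \qquad |R_n(u)|\lesssim |\varphi_n(u)-\varphi(u)|^2.
\end{equation*}
Substituting into \eqref{eqLambdahat} yields $\hat\lambda_n-\lambda = \mathrm{Bias}_n + \mathrm{Lin}_n + \mathrm{Rem}_n$. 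For $\mathrm{Bias}_n$ I would use the distributional identity $\Deltaup^{-1}\FI[\Log(\varphi)] = -\lambda\delta_0 - \gamma\delta_0' + \nu$ drawn from \eqref{eqLevyKhintchineCPP}: after convolution with $K_{h_n}$ and integration against $f^{(\lambda)}_n$, the $-\lambda\delta_0$ term is exponentially small (since $f^{(\lambda)}_n\equiv 0$ on $(-\varepsilon_n,\varepsilon_n)$ while $K_{h_n}$ concentrates at scale $h_n\ll\varepsilon_n$), the $-\gamma\delta_0'$ term vanishes identically by symmetry of $K$, and the $\nu$-term reduces to $\nu(\{\varepsilon_n\le|x|\le H_n\})$ up to smoothing errors. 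This differs from $\lambda$ by $\nu_{ac}((-\varepsilon_n,\varepsilon_n)) = O(n^{-\alpha\vartheta_\varepsilon}) = o(n^{-1/2})$ via Assumption \ref{enum:th:ass1b} and $2\vartheta_\varepsilon>1/\alpha$, plus the tail $\nu(\{|x|>H_n\}) = o((\log H_n)^{-\beta}) = o(n^{-\beta\vartheta_H}) = o(n^{-1/2})$ via Assumption \ref{enum:th:ass2} and $\vartheta_H\ge 1/(2\beta)$.

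The centrepiece is the linear term. A Plancherel manipulation rewrites
\begin{equation*}
\mathrm{Lin}_n = \frac{1}{n}\sum_{k=1}^n \bigl[\psi_n(Z_k)-\E\psi_n(Z_1)\bigr], \qquad \psi_n := \Deltaup^{-1}\bigl(f^{(\lambda)}_n \ast \FI[\varphi^{-1}(-\cdot)\FT K_{h_n}(-\cdot)]\bigr),
\end{equation*}
so that a Lindeberg CLT for triangular arrays applies once $\mathrm{Var}_P(\psi_n(Z_1))\to \sigma_\lambda^2$. This variance convergence follows by dominated convergence using the bound $|\varphi^{-1}|\le e^{2\lambda\Deltaup}$ from \eqref{eq:boundvarphi} together with the pointwise limits $\FT K_{h_n}\to 1$ and $f^{(\lambda)}_n\to f^{(\lambda)}=\1_{\R\setminus\{0\}}$; the Lindeberg condition is immediate from the uniform boundedness of $\psi_n$. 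The remainder is absorbed by the crude Parseval bound $|\mathrm{Rem}_n|\lesssim \|\FT K_{h_n}\|_1\,\sup_{|u|\le h_n^{-1}}|R_n(u)| = O_\Pr(h_n^{-1}\log(h_n^{-1})/n) = O_\Pr(n^{\vartheta_h-1}) = o(n^{-1/2})$, which is valid because $\vartheta_h<1/4$; Slutsky then delivers the first claim.

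The drift estimator is treated by the same decomposition with $f^{(\gamma)}_n$ replacing $f^{(\lambda)}_n$. Now the $-\lambda\delta_0$ piece vanishes exactly by antisymmetry of $x\1_{|x|<h_n}$; the $-\gamma\delta_0'$ piece equals $\gamma$ by the integration-by-parts identity $\int_{-h_n}^{h_n} x K_{h_n}'(x)\,dx=-c$, which is precisely how $c$ is defined; and the $\nu$-contributions are $O(h_n^2)$ using $|J_j|\ge\varepsilon\gg h_n$ and Assumption \ref{enum:th:ass1b}, giving a bias $o(h_n/\sqrt{n})$. The linear stochastic term is bounded by Chebyshev: exploiting the odd symmetry of $f^{(\gamma)}_n$ together with the band-limitation $\FT K_{h_n}$, the analogue $\psi_n^{(\gamma)}$ has $L^2(P)$-norm of strictly smaller order than $h_n$, so that its centred sample mean is $o_\Pr(h_n/\sqrt{n})$; the quadratic remainder is controlled as for $\mathrm{Rem}_n$. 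The main obstacle I foresee is the bookkeeping needed to orchestrate the four scales $h_n\ll\varepsilon_n\ll\widetilde{H}_n\ll H_n$ in the bias analysis under merely logarithmic assumptions on $\nu$ and $\nu_{ac}$, and the dominated-convergence identification of $\sigma_\lambda^2$, whose integrand depends on $n$ through both $f^{(\lambda)}_n$ and $\FT K_{h_n}$ inside a non-trivial convolution.
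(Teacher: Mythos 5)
Your overall strategy (linearise the distinguished logarithm, split into bias, linear and remainder terms, Lindeberg CLT for the linear part, dominated convergence to identify the variance) is exactly the paper's route, and the bias analysis and the identification of $\sigma^2_\lambda$ via the pointwise limit $f^{(\lambda)}_n \ast K_{h_n} \to \1_{\R\setminus\{0\}}$ are sound. However, your control of the remainder term contains a genuine error. You bound
$\abs{\mathrm{Rem}_n} \lesssim \norm{\FT K_{h_n}}_1 \sup_{|u|\le h_n^{-1}}|R_n(u)|$ and claim this is $O_{\Pr}(h_n^{-1}\log(h_n^{-1})/n)=O_{\Pr}(n^{\vartheta_h-1})$. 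But $h_n^{-1}\sim \exp(n^{\vartheta_h})$ is \emph{exponentially} large (it is $\log(h_n^{-1})$ that is $\sim n^{\vartheta_h}$), so the quantity $h_n^{-1}\log(h_n^{-1})/n$ diverges and the bound is vacuous. Moreover, the bound as written silently discards the factor $\FT f^{(\lambda)}_n(-u)$ in the integrand $\int \FT f^{(\lambda)}_n(-u)R_n(u)\FT K_{h_n}(u)\,du$, and $\sup_u|\FT f^{(\lambda)}_n(u)|$ is only bounded by $\norm{f^{(\lambda)}_n}_1\sim 2H_n\to\infty$, so it cannot be absorbed into a constant either.

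The fix — which is what the paper does — is to put the smallness into $\FT f^{(\lambda)}_n$ rather than $\FT K_{h_n}$: since $f^{(\lambda)}_n$ is a difference of indicators of intervals, $|\FT f^{(\lambda)}_n(u)|\lesssim \min\{H_n, |u|^{-1}\}$, whence $\int_{-h_n^{-1}}^{h_n^{-1}}|\FT f^{(\lambda)}_n(u)|\,du = O(\log h_n^{-1})$ (the paper's Lemma \ref{LemmaRemainderTerm}). Combined with $|\FT K_{h_n}|\le\norm{K}_1\1_{[-h_n^{-1},h_n^{-1}]}$ and $\sup_{|u|\le h_n^{-1}}|R_n(u)|=O_{\Pr}(n^{-1}(\log h_n^{-1})^{1+2\delta})$, this yields $\abs{\mathrm{Rem}_n}=O_{\Pr}(n^{-1}(\log h_n^{-1})^{2+2\delta})=O_{\Pr}(n^{(2+2\delta)\vartheta_h-1})=o_{\Pr}(n^{-1/2})$ for small $\delta$, and it is precisely here that the hypothesis $\vartheta_h<1/4$ is used — another indication that your bound, which would apparently only need $\vartheta_h<1/2$, cannot be the right one. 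A secondary, non-fatal imprecision: in the drift bias you claim the $\nu$-contributions are $O(h_n^2)$; the contribution of $\nu_{ac}$ is only $O((\log(h_n^{-1}))^{-\alpha})$ under Assumption \ref{enum:th:ass1b}, which is polynomially rather than exponentially small in $n$, but still $o(n^{-1/2})$ since $\vartheta_h>1/(2\alpha)$, so the conclusion survives.
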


We first show the finiteness of $\sigma^2_\lambda$ for a more general form of it so that the justification extends to similar expressions encountered later in this section. Notice that
\begin{equation}\label{eqFIPhim1nP}
	P=e^{-\lambda \Deltaup} \,  \delta_{\gamma \Deltaup} \ast  \sum_{k=0}^{\infty} \nu^{\ast k} \frac{{\Deltaup}^k}{k!}  \quad \mbox{ and } \quad \FII{\varphi^{-1}(-\cdot)} = e^{\lambda {\Deltaup}} \,  \delta_{\gamma \Deltaup} \ast \sum_{k=0}^\infty \bar \nu^{\ast k} \frac{(-{\Deltaup})^{k}}{k!},
\end{equation}
where $\bar{\nu}(A):=\nu(-A)$ for all $A \subseteq \R$ Borel measurable, $\bar \nu^{\ast k}$ denotes the $(k-1)$-fold convolution of $\bar{\nu}$ and $\bar \nu^{\ast 0}=\delta_0$ by convention. Therefore, $\FII{\varphi^{-1}(-\cdot)}$ is a finite signed measure and we remark that its series representation, and the fact that it has mass equal to $1$, follows by similar arguments to those used to justify the well-known representation of $P$ above (see Remark 27.3 in \cite{S99}). Consequently, for any bounded functions $f_1,f_2:\R\to \R$,
\begin{equation}\label{eqFinVar}
	\int_{\R} \Big( f_1 \ast {\cal F}^{-1}  \left[ \varphi^{-1}(-\cdot) \right] (x) \Big) \Big( f_2  \ast {\cal F}^{-1} \left[ \varphi^{-1}(-\cdot) \right] (x) \Big) \, P(dx) < \infty.
\end{equation}
Furthermore, if ${\cal F}^{-1}  \left[ \varphi_0^{-1}(-\cdot) \right]$ and $P_0$ are the measures above when $\gamma\!=\!0$, the last display is
\begin{align*}
	\int_{\R}  \Big( f_1& \ast  {\cal F}^{-1}  \left[ \varphi_0^{-1}(-\cdot) \right] \ast \delta_{\gamma \Deltaup} (x) \Big) \Big( f_2  \ast {\cal F}^{-1} \left[ \varphi_0^{-1}(-\cdot) \right] \ast \delta_{\gamma \Deltaup} (x)\Big)  P_0 \ast \delta_{\gamma \Deltaup} (dx)\\
	& = \int_{\R} \Big( f_1 \ast {\cal F}^{-1}  \left[ \varphi_0^{-1}(-\cdot) \right] (x) \Big) \Big( f_2  \ast {\cal F}^{-1} \left[ \varphi_0^{-1}(-\cdot) \right] (x) \Big) P_0(dx).
\end{align*}
Therefore, in all the limiting quantities below the drift does not play any role and in some remarks we thus assume $\gamma\!=\!0$. This agrees with the observation  in \cite{T15} that $\gamma$ has no influence on the information lower bounds of the problem and also with the null limiting quantity for $\gamma$ in Proposition \ref{Proplambdamu}. The latter and the $h_n/\sqrt{n}$-rate may seem surprising at first. When the path of the compound Poisson process at hand is perfectly observed, $\gamma$ can be \textit{exactly} computed as the slope of the process in the time intervals when it does not jump and thus has no influence on the estimation of the other parameters. Hence, these phenomena can be understood as a consequence and as further evidence of the marginal ill-posedness of our inverse problem.

In \cite{T15}, Trabs showed that if $\nu$ is absolutely continuous, the Cram{\'e}r--Rao bound for the limiting variance of a $1/\sqrt{n}$-consistent estimator of a functional $\int_{\R} f d\nu$ is \eqref{eqFinVar} with $f_1=f_2=f \1_{\R\setminus\{0\}}$. The truncation induced by the indicator is crucial: by basic Fourier analysis
\begin{equation*}
	\int_{\R} f \ast {\cal F}^{-1} \left[ \varphi^{-1}(-\cdot) \right] (x)  \, P(dx)=f(0) \quad \mbox{ for any $f$ uniformly bounded,}
\end{equation*}
and hence $f \1_{\R\setminus\{0\}}$ is in the range of the adjoint score operator of the problem. Recalling that $\lambda=\int_{\R} \nu$, we therefore achieve the lower bound in a larger model than that in \cite{T15} and conclude that the expression of the bound does not change in our model and that our estimator is efficient. This variance can be analysed further: after some algebra using the representations in \eqref{eqFIPhim1nP},
\begin{align}\label{decsigmalambda}
	\sigma^2_\lambda \!=\! \frac{P(\{0\}) \big( {\cal F}^{-1}  \left[ \varphi^{-1}(-\cdot) \right] (\{0\}) \big)^2\!-\!1}{{\Deltaup}^2} + \frac{1}{{\Deltaup}^2} \!\!\sum_{j\in \Z \setminus \{0\}} \!\! P(\{J_j\}) \Big( {\cal F}^{-1}  \left[ \varphi^{-1}(-\cdot) \right] (\{J_j\}) \Big)^2\!.
\end{align}
The estimator $\hat \lambda_n$ is able to decompound the observations $Z_k$ that are $\gamma \Deltaup$ as a result of exact cancellations between jumps arising from $\nu_d$ and the last display explicitly shows how this difficulty features in the complexity of estimating $\lambda$. Indeed, in the cases considered by existing literature, in which $\nu$ is absolutely continuous ($p\!=\!0$) or supported only in the positive real line, the infinite sum above is zero and $\sigma^2_\lambda$,  simplifies to $\sigma^2_\lambda = (e^{\lambda \Deltaup} - 1)/{\Deltaup}^2$.
This coincides with the asymptotic variance of the naive estimator $- \Deltaup^{-1} \log \left( n^{-1}\sum_{k=1}^n \1_{\{0\}} (Z_k) \right)$, which is generally used in these simpler settings. As shown in  Section 2.4.1 in \cite{C16}, there the two estimators are equal up to exponentially negligible terms. However, in our general setting the naive one is not even consistent due to its lack of ability to decompound. From the simplification of $\sigma^{2}_\lambda$ we notice that when $\lambda {\Deltaup}$, the expected number of jumps in a ${\Deltaup}$-observation window, is small, 
\begin{equation}\label{eqlambdahighfreq}
	\sqrt{n \lambda {\Deltaup}} \, \Big( \hat \lambda_n - \lambda \Big)  \approx N(0,\lambda^2).
\end{equation}
The limiting variance $\lambda^2$ is the inverse of the Fisher information for the problem of estimating an exponential distribution with parameter $\lambda$ from independent observations of it. Therefore, 
it agrees with the fact that in the continuous observations regime 
the intensity can be estimated from the interarrival times using the maximum likelihood estimator. The last display 
holds in our general setting in view of the following lemma.

\begin{lem}\label{lemmaAsympSigma}
	Let $f_1,f_2$ be bounded functions and let $\FII{\varphi^{-1}(-\cdot)} $ and $P$ be as in \eqref{eqFIPhim1nP}. Then,
	\begin{align*}
		\vspace{-0.3cm}
		& \! \! \int_{\R} \Big(f_1 \ast  {\cal F}^{-1} \left[ \varphi^{-1}(-\cdot) \right] (x)  \Big) \Big( f_2  \ast {\cal F}^{-1} \left[ \varphi^{-1}(-\cdot) \right] (x)\Big)\, P(dx) \notag \\
		& = \! f_1(0)  f_2(0) + \Deltaup \! \left( \int_{\R} \!f_1 f_2\, \nu - f_1(0) \, f_2 \ast \bar \nu \, (0) - f_2(0) \, f_1 \ast \bar \nu \, (0) + \lambda \, f_1(0)  f_2(0) \! \right) \!+ \! O\big((\lambda \Deltaup)^2\big).
		\vspace{-0.3cm}
	\end{align*}
	In particular, if $f_1(0)=f_2(0)=0$, the last display is $ \lambda \Deltaup \int_{\R} f_1(x) f_2(x) \, F(dx)  + O\big((\lambda \Deltaup)^2\big).$
\end{lem}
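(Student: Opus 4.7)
The plan is to start from the explicit series representations given in \eqref{eqFIPhim1nP}, truncate both $P$ and $M:=\FII{\varphi^{-1}(-\cdot)}$ at first order in $\Deltaup$, and then multiply out. By the drift-cancellation observation immediately following \eqref{eqFinVar}, I would first reduce to the case $\gamma=0$: the drift affects $P$ and $M$ only through the deltas $\delta_{\gamma\Deltaup}$ and $\delta_{-\gamma\Deltaup}$, which cancel pointwise in the integral, so both sides of the claimed identity are unchanged.

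Next I would write
\[
P = e^{-\lambda\Deltaup}\delta_0 + \Deltaup\, e^{-\lambda\Deltaup}\nu + R_P, \qquad M = e^{\lambda\Deltaup}\delta_0 - \Deltaup\, e^{\lambda\Deltaup}\bar\nu + R_M,
\]
where the geometric tail of the series and $|\nu|=\lambda$ give the total variation bounds $\|R_P\|_{TV}\le e^{-\lambda\Deltaup}(e^{\lambda\Deltaup}-1-\lambda\Deltaup)=O((\lambda\Deltaup)^2)$ and similarly $\|R_M\|_{TV}=O((\lambda\Deltaup)^2)$. Since $f_1,f_2$ are bounded, Young's inequality yields
\[
f_i\ast M = e^{\lambda\Deltaup}\bigl(f_i-\Deltaup\, f_i\ast\bar\nu\bigr) + \rho_i,\qquad \|\rho_i\|_\infty=O\bigl(\|f_i\|_\infty(\lambda\Deltaup)^2\bigr),
\]
and hence the product $(f_1\ast M)(f_2\ast M)$ agrees with $e^{2\lambda\Deltaup}(f_1-\Deltaup f_1\ast\bar\nu)(f_2-\Deltaup f_2\ast\bar\nu)$ uniformly up to an $O((\lambda\Deltaup)^2)$ error. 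Integrating this product against the truncated expansion of $P$ now produces only two non-negligible contributions: one from $e^{-\lambda\Deltaup}\delta_0$, evaluated at $0$, and one from $\Deltaup e^{-\lambda\Deltaup}\nu$; all mixed terms involving $R_P$ or $\rho_i$ are $O((\lambda\Deltaup)^2)$ by the total variation bounds and finiteness of $\nu$.

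Expanding $e^{\pm\lambda\Deltaup}=1\pm\lambda\Deltaup+O((\lambda\Deltaup)^2)$ and collecting terms of orders $\Deltaup^0$ and $\Deltaup^1$ then gives exactly
\[
f_1(0)f_2(0) + \Deltaup\Bigl(\int_{\R}\!f_1 f_2\,d\nu - f_1(0)(f_2\ast\bar\nu)(0) - f_2(0)(f_1\ast\bar\nu)(0) + \lambda f_1(0)f_2(0)\Bigr) + O((\lambda\Deltaup)^2),
\]
which is the stated identity. The specialization when $f_1(0)=f_2(0)=0$ is immediate upon substituting $\nu=\lambda F$. The main bookkeeping obstacle is making sure that every cross term that mixes one of $R_P,R_M,\rho_i$ with something else is truly $O((\lambda\Deltaup)^2)$ and not larger; the total variation control on the remainders combined with the uniform bounds on $f_i\ast M$ (which are $O(1)$ since both $\|M\|_{TV}=1$ and $\|f_i\|_\infty<\infty$) handles this uniformly, so no delicate cancellation is needed.
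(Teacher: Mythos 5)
Your proposal is correct and follows essentially the same route as the paper: expand the series representations of $P$ and $\FII{\varphi^{-1}(-\cdot)}$ from \eqref{eqFIPhim1nP}, isolate the $k=0$ and $k=1$ terms, and absorb everything else into $O\big((\lambda\Deltaup)^2\big)$ via the geometric tail bounds (you are in fact more explicit about the bookkeeping than the paper is). One small imprecision: $\FII{\varphi^{-1}(-\cdot)}$ is a signed measure with \emph{mass} $1$ but total variation norm only bounded by $e^{2\lambda\Deltaup}$, which is still $O(1)$ in the relevant regime, so your uniform bound on $f_i\ast M$ goes through unchanged.
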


The following result deals with estimation of the mass of each  potential atom in $N$ and $F$. Its conclusions already hint at some differences between estimating these two functions.

\begin{prop}\label{Proppsqs}
	Under the assumptions and notation of Proposition \ref{Proplambdamu} we have that
	\begin{equation*}
		\sqrt{n} \, \big( \hat q_{j,n} - q_{j} \big) \to^d N\big(0, \sigma^2_{q_j} \big) \quad \mbox{ and } \quad \sqrt{n} \, \big( \hat p_{j,n} - p_{j} \big) \to^d N\big(0, \sigma^2_{p_j} \big)
	\end{equation*}
	as $n\to \infty$ for any $j\in \Z\setminus\{0\}$, where, defining $f^{(q_j)} := \1_{J_j}$ and $f^{(p_j)} := \lambda^{-1} \big(f^{(q_j)} -p_j f^{(\lambda)} \big)$,
	\begin{align*}
		\sigma^2_{q_j}\! :=\! \frac{1}{{\Deltaup}^2 }\!\! \int_{\R}\!\! \Big(\! f^{(q_j)} \! \ast \! {\cal F}^{-1}\! \left[ \varphi^{-1}(-\cdot) \right] (x)\!\Big)^2\!\! P(dx) \, \, \mbox{ and } \, \,
		\sigma^2_{p_j}\! :=\! \frac{1}{{\Deltaup}^2}\!\! \int_{\R}\!\! \Big(\! f^{(p_j)}\! \ast\! {\cal F}^{-1}\! \left[ \varphi^{-1}(-\cdot) \right] (x)\!\Big)^2 \!\! P(dx). 
	\end{align*}
\end{prop}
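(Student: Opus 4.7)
The plan is to run the spectral linearisation strategy separately for $\hat q_{j,n}$ and $\hat p_{j,n}$, mirroring the approach used for $\hat\lambda_n$ in Proposition \ref{Proplambdamu}. First, for $\hat q_{j,n}$ I would write
\begin{equation*}
\hat q_{j,n}-q_j = \underbrace{\frac{1}{\Deltaup}\!\int_{\R} f^{(q_j)}_n(x)\,\FII{(\varphi_n-\varphi)\varphi^{-1}\FT K_{h_n}}(x)\,dx}_{L_n} + B_n + R_n,
\end{equation*}
where $B_n$ is the deterministic bias coming from the identity \eqref{identityqjs} together with the regularisation by $K_{h_n}$ and the truncation $\varepsilon_n$, and $R_n$ gathers the nonlinear terms of the Taylor expansion $\Log(\varphi_n/\varphi)=\sum_{k\ge 1}(-1)^{k+1}k^{-1}((\varphi_n-\varphi)/\varphi)^{k}$ applied on the set where $\sup_{|u|\le h_n^{-1}}|\varphi_n(u)-\varphi(u)|$ is small. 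With the rates $h_n\sim\exp(-n^{\vartheta_h})$, $\varepsilon_n\sim\exp(-n^{\vartheta_\varepsilon})$ and $\alpha>4$, $\beta>2$, Assumption \ref{enum:th:ass1b} gives $B_n=o(n^{-1/2})$ and the standard uniform bounds on $\varphi_n-\varphi$ (which appear in the proof of Proposition \ref{Proplambdamu}) yield $R_n=o_\Pr(n^{-1/2})$.

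For the leading term I would use Plancherel together with $\FI[\varphi_n-\varphi]=n^{-1}\sum_{k=1}^n(\delta_{Z_k}-P)$ to rewrite
\begin{equation*}
L_n = \frac{1}{n\Deltaup}\sum_{k=1}^n\bigl\{\psi_{j,n}(Z_k)-\E\psi_{j,n}(Z_1)\bigr\},\qquad \psi_{j,n}:=f^{(q_j)}_n \ast \FII{\varphi^{-1}(-\cdot)\FT K_{h_n}(-\cdot)},
\end{equation*}
and then show $\psi_{j,n}\to \psi_j:=f^{(q_j)}\ast \FII{\varphi^{-1}(-\cdot)}$ in $L^2(P)$ as $\varepsilon_n,h_n\to 0$, using that $\FII{\varphi^{-1}(-\cdot)}$ is a finite signed measure by \eqref{eqFIPhim1nP}. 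Since $\psi_j(x)=\FII{\varphi^{-1}(-\cdot)}(\{x-J_j\})$ is bounded, the classical Lindeberg CLT applies and gives $\sqrt{n}\,L_n\to^d N(0,\sigma_{q_j}^2)$; the centring vanishes asymptotically because $\int\psi_{j,n}\,dP\to f^{(q_j)}(0)=0$ (as $J_j\neq 0$), so the variance simplifies to the stated $\sigma_{q_j}^2$ and is finite by \eqref{eqFinVar}.

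For $\hat p_{j,n}=\hat\lambda_n^{-1}\hat q_{j,n}$ I would avoid invoking a bivariate CLT by directly linearising:
\begin{equation*}
\hat p_{j,n}-p_j = \frac{1}{\lambda}(\hat q_{j,n}-q_j)-\frac{p_j}{\lambda}(\hat\lambda_n-\lambda)+\frac{(\hat\lambda_n-\lambda)\,(p_j\hat\lambda_n-\hat q_{j,n})}{\lambda\hat\lambda_n},
\end{equation*}
and observe that the last summand is $o_\Pr(n^{-1/2})$ by Proposition \ref{Proplambdamu} together with the $\sqrt{n}$-tightness of $\hat q_{j,n}-q_j$ established above. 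Substituting the linearisations of both $\hat q_{j,n}-q_j$ and $\hat\lambda_n-\lambda$ obtained on the same empirical process $n^{-1}\sum_k(\delta_{Z_k}-P)$, the two leading terms collapse into a single empirical average of $\Deltaup^{-1}\lambda^{-1}(f^{(q_j)}-p_jf^{(\lambda)})\ast\FII{\varphi^{-1}(-\cdot)}=\Deltaup^{-1}f^{(p_j)}\ast\FII{\varphi^{-1}(-\cdot)}$ evaluated at the $Z_k$, so the CLT and \eqref{eqFinVar} deliver the asymptotic variance $\sigma_{p_j}^2$.

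The main technical obstacle is controlling the remainder $R_n$ uniformly in the interplay between the shrinking cut-off $\varepsilon_n$ and the bandwidth $h_n$: the $L^2(P)$ convergence $\psi_{j,n}\to\psi_j$ must be quantitative enough that products of $(\varphi_n-\varphi)$-type errors integrated against $f^{(q_j)}_n\ast\FI[\varphi^{-1}\FT K_{h_n}]$ vanish faster than $n^{-1/2}$ despite the singular support of $f^{(q_j)}_n$ around $J_j$. This forces the precise combination of rates required in the hypothesis $1/\alpha<2\vartheta_\varepsilon\le\vartheta_h<1/4$, and is essentially the same balancing act as in the proof of Proposition \ref{Proplambdamu}; I would therefore reduce the problem to the estimates already established there by noting that $f^{(q_j)}_n$ is a translate of a function of the same type as the indicator appearing in $f^{(\lambda)}_n$.
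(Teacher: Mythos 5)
Your proposal is correct and follows essentially the same route as the paper: linearise $\Log(\varphi_n/\varphi)$ into $(\varphi_n-\varphi)\varphi^{-1}$ plus a remainder of order $O_{\Pr}(n^{-1}(\log h_n^{-1})^{1+2\delta})$, control the bias via Assumption \ref{enum:th:ass1b} and the decay of $K$, identify the limit of the smoothed kernel $f^{(q_j)}_n\ast K_{h_n}$ as the point indicator $\1_{\{J_j\}}$ (whose contribution survives only through the atoms of $\FII{\varphi^{-1}(-\cdot)}$), and apply the Lindeberg CLT to the resulting triangular array — the paper packages these steps into Theorems \ref{ThmLinearisation}, \ref{ThmGenericCLT} and Lemmas \ref{Lemmaglambdagnu}--\ref{Lemmabiasterms} and then invokes the linear-combination result of Theorem \ref{ThmCLTEstimators}. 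Your treatment of $\hat p_{j,n}$ via the exact three-term expansion with a negligible cross term is algebraically equivalent to the paper's factorisation $\sqrt{n}(\hat p_{j,n}-p_j)=\hat\lambda_n^{-1}\sqrt{n}\big((\hat q_{j,n}-q_j)+p_j(\lambda-\hat\lambda_n)\big)$ followed by Slutsky's lemma, and both hinge on the joint linearisation on the same empirical process collapsing the leading terms into the single kernel $f^{(p_j)}$.
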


Similar to the expression for $\sigma_{\lambda}^2$ above, we have that for any $j\in \Z \setminus\{0\}$,
\begin{equation*}
	\sigma^2_{q_j} = \frac{1}{{\Deltaup}^2} \sum_{l\in \Z} P(\{J_l\}) \Big( {\cal F}^{-1}  \left[ \varphi^{-1}(-\cdot) \right] (\{J_l-J_j\}) \Big)^2, 
\end{equation*}
where we recall that $J_0:=0$ for notational simplicity, and
\begin{equation*}
	\sigma^2_{p_j} = \frac{1}{{\Deltaup}^2\lambda^2} \bigg( \sum_{l\in \Z} P(\{J_l\}) \Big( {\cal F}^{-1}  \left[ \varphi^{-1}(-\cdot) \right] (\{J_l-J_j\}) + p_j {\cal F}^{-1}  \left[ \varphi^{-1}(-\cdot) \right] (\{J_l\}) \Big)^2-p_j^2 \bigg).
\end{equation*}
Both expressions are zero when $\nu$ is absolutely continuous ($p=0$) because the probability of one or more increments $Z_k$ taking values in $\varepsilon \times \Z \setminus \{0\}$ is zero and there are no discrete jumps to be decompounded. 
We can gain more intuition when $\lambda \Deltaup$ is sufficiently small. By Lemma \ref{lemmaAsympSigma},
\begin{equation*}
	\sqrt{n \lambda {\Deltaup}} \, \big( \hat q_{j,n} - q_j \big)  \approx N\big(0, \lambda^2 p_j\big) \quad \mbox{and} \quad \sqrt{n \lambda {\Deltaup}} \, \big( \hat p_{j,n} - p_j \big)  \approx N\big(0, p_j(1-p_j)\big).
\end{equation*} 
In this regime most if not all of the jumps are observed directly so little or no decompounding is needed and the asymptotic variances do not depend on weights other than the one under consideration.  The dependence on $p_j$ of the first limiting variance hints at the Brownian motion type of limit already mentioned in the introduction: it shows how estimating $q_j$ carries the uncertainty of not knowing $\lambda$ agreeing with \eqref{eqlambdahighfreq}. Dividing its estimate by that of $\lambda$ to estimate $p_j$ removes this variability and the second limiting variance agrees with Donsker's theorem.


We now address the estimation of $N$ and $F$, the main contribution of this work.

\begin{thm}\label{ThmNF}
	Under the assumptions of Proposition \ref{Proplambdamu} and the notation of Section \ref{SecDefNot},
	\begin{equation*}
		\sqrt{n} \, \Big(\widehat{N}_n - N \Big)  \to^{\mathcal{L}} \mathbb B^N \, \, \mbox{ in } \ell^{\infty}(\R) \quad \mbox{ and } \quad \sqrt{n} \,  \Big(\widehat{F}_n - F \Big)  \to^{\mathcal{L}} \mathbb G^F \, \, \mbox{ in } \ell^{\infty}(\R)
	\end{equation*}
	as $n\to \infty$, where, defining for any $t\in\R$
	\begin{equation*}
		f^{(N)}_t \!:= \1_{(-\infty, t]}  \1_{\R \setminus \{0\}} \quad \mbox{ and } \quad f^{(F)}_t \! := \lambda^{-1} \big( f^{(N)}_t  - F(t) f^{(\lambda)} \big)=\lambda^{-1} \big( \1_{(-\infty, t]}  - F(t) \big) \1_{\R \setminus \{0\}}, 
	\end{equation*}
	$\mathbb B^N\!$ and $\mathbb G^F\!$ are tight centred Gaussian Borel random variables in $\ell^\infty(\R)$ with covariance structures 
	\begin{align}\label{eqcovN}
		\Sigma_{s,t}^{N}:= \frac{1}{{\Deltaup}^2}  \int_{\R}  \Big( f^{(N)}_s \ast {\cal F}^{-1} \left[ \varphi^{-1}(-\cdot) \right] (x) \Big) \Big( f^{(N)}_t \ast {\cal F}^{-1} \left[ \varphi^{-1}(-\cdot) \right] (x) \Big)  P(dx), \quad s,t\in \R,
		\vspace{-0.3cm}
	\end{align}
	and
	\begin{align}\label{eqcovF}
		\vspace{-0.3cm}
		\Sigma_{s,t}^{F}:= \frac{1}{{\Deltaup}^2}  \int_{\R}  \Big( f^{(F)}_s & \ast {\cal F}^{-1} \left[ \varphi^{-1}(-\cdot) \right] (x) \Big) \Big( f^{(F)}_t \ast {\cal F}^{-1} \left[ \varphi^{-1}(-\cdot) \right] (x) \Big)  P(dx), \quad s,t\in \R.
	\end{align}
\end{thm}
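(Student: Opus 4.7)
The plan is to reduce the theorem to a uniform CLT for an empirical process indexed by $t\in\R$, via linearisation of the distinguished logarithm. On the event $\Omega_n:=\{\sup_{|u|\le h_n^{-1}}|\varphi_n(u)-\varphi(u)|\le e^{-2\lambda\Deltaup}/2\}$, which has $\Pr$-probability tending to one by standard concentration of $\varphi_n-\varphi$ under Assumption \ref{enum:th:ass2} and the chosen growth of $h_n^{-1}$, a Taylor expansion of $\log(1+z)$ combined with \eqref{eq:boundvarphi} yields
\begin{equation*}
\Log(\varphi_n)-\Log(\varphi)=(\varphi_n-\varphi)/\varphi+R_n,\qquad |R_n|\lesssim|\varphi_n-\varphi|^2.
\end{equation*}
This produces a decomposition $\widehat N_n(t)-N(t)=S_n(t)+B_n(t)+\mathcal R_n(t)$, where $S_n$ is the linear stochastic term, $B_n$ is the deterministic bias from replacing $N(t)$ by its spectral approximation in \eqref{identitygammalambda}--\eqref{identityqjs}, and $\mathcal R_n$ gathers the $R_n$-contribution. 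The piecewise construction of $f^{(N)}_{t,n}$ in \eqref{defftn} aligns the truncation intervals $(J_j-\varepsilon_n,J_j+\varepsilon_n)$ precisely with the potential atoms of $\nu_d$; together with Assumption \ref{enum:th:ass1b} for $\nu_{ac}$ and the exponentially small $\varepsilon_n,h_n$, this gives $\sup_t|B_n(t)|=o(n^{-1/2})$ uniformly.

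For the stochastic term $S_n(t)$, apply the Fourier--Plancherel/convolution identity $\int f\cdot\FII{g\cdot h}\,dx=\int(f\ast\check\mu)\,d\FI[g]$ with $\mu=\FI[h]$, valid here because $\varphi^{-1}\FT K_{h_n}$ is bounded and compactly supported by \eqref{eq:boundvarphi} and \eqref{conditionsK}. This rewrites $S_n(t)$ as the centred empirical process $(P_n-P)(g_{t,n})$ with
\begin{equation*}
g_{t,n}:=\Deltaup^{-1}\bigl(f^{(N)}_{t,n}\ast\FII{\varphi^{-1}(-\cdot)\FT K_{h_n}(-\cdot)}\bigr).
\end{equation*}
Using the symmetry of $K$ and the series representation of $\FII{\varphi^{-1}(-\cdot)}$ as a finite signed measure in \eqref{eqFIPhim1nP}, the limiting integrand is $g_t:=\Deltaup^{-1}(f^{(N)}_t\ast\FII{\varphi^{-1}(-\cdot)})$, and the approximation $\FT K_{h_n}\to 1$ together with the bounded total variation of $\FII{\varphi^{-1}(-\cdot)}$ ensure $\sup_t\norm{g_{t,n}-g_t}_\infty\to 0$ fast enough that $\sqrt n\,(P_n-P)(g_{t,n}-g_t)\to 0$ uniformly in $t$, by a standard equicontinuity argument.

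Next, prove a uniform CLT for $(P_n-P)(g_t)$ over $t\in\R$: the class $\{g_t:t\in\R\}$ is the convolution of the VC-subgraph class $\{\1_{(-\infty,t]}\1_{\R\setminus\{0\}}:t\in\R\}$ with the fixed finite signed measure $\FII{\varphi^{-1}(-\cdot)}$, and is therefore uniformly bounded and of VC type with a square-integrable envelope, whence $P$-Donsker by standard entropy arguments in the spirit of \cite{NR12}. Tightness in $\ell^\infty(\R)$ and identification of the covariance with \eqref{eqcovN} give $\sqrt n\,(\widehat N_n-N)\to^{\mathcal L}\mathbb B^N$. For $\widehat F_n=\widehat N_n/\hat\lambda_n$, the algebraic identity
\begin{equation*}
\sqrt n(\widehat F_n(t)-F(t))=\hat\lambda_n^{-1}\sqrt n(\widehat N_n(t)-N(t))-F(t)\,\hat\lambda_n^{-1}\sqrt n(\hat\lambda_n-\lambda)
\end{equation*}
combined with the joint asymptotic normality of $(\widehat N_n,\hat\lambda_n)$, obtained by applying the same empirical-process argument to the augmented class $\{g_t\}\cup\{g^{(\lambda)}\}$ with $g^{(\lambda)}:=\Deltaup^{-1}(f^{(\lambda)}\ast\FII{\varphi^{-1}(-\cdot)})$ and the continuous mapping theorem, yields $\sqrt n(\widehat F_n-F)\to^{\mathcal L}\mathbb G^F$ with the covariance \eqref{eqcovF} matching the definition of $f^{(F)}_t$. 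The principal obstacle is uniform control of the quadratic remainder $\mathcal R_n(t)$: this requires sharp sup-norm bounds on $\varphi_n-\varphi$ over the expanding interval $[-h_n^{-1},h_n^{-1}]$, and it is precisely this estimate, together with the logarithmic moment bound in Assumption \ref{enum:th:ass2}, that forces the calibration of $\vartheta_h,\vartheta_\varepsilon,\vartheta_H$ in Proposition \ref{Proplambdamu}.
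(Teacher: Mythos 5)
Your overall architecture (linearisation of the distinguished logarithm, bias/linear/remainder decomposition, Plancherel rewriting of the linear term as $(P_n-P)\psi_{t,n}$, Slutsky for $\widehat F_n=\widehat N_n/\hat\lambda_n$) matches the paper's, and your treatment of the bias and of the quadratic remainder identifies the right ingredients. The genuine gap is in the step where you pass from the $n$-dependent class $\{g_{t,n}\}$ to the fixed class $\{g_t\}$: the claim $\sup_t\norm{g_{t,n}-g_t}_\infty\to 0$ is false. The measure $\FII{\varphi^{-1}(-\cdot)}$ contains the atom $e^{\lambda\Deltaup}\delta_{\gamma\Deltaup}$ (the $k=0$ term of its series in \eqref{eqFIPhim1nP}), so $g_{t,n}(x)-g_t(x)$ contains $e^{\lambda\Deltaup}\Deltaup^{-1}\big(f^{(N)}_{t,n}\ast K_{h_n}-f^{(N)}_t\big)(x-\gamma\Deltaup)$; evaluating at $x=t+\gamma\Deltaup$ this is $e^{\lambda\Deltaup}\Deltaup^{-1}(\int_0^\infty K-1)+o(1)=-e^{\lambda\Deltaup}/(2\Deltaup)+o(1)$, which does not vanish. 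The kernel smoothing error at the jump of $\1_{(-\infty,t]}$ is of order one on an $h_n$-neighbourhood of $t$ and survives convolution with any atomic component, so no sup-norm perturbation argument can work.

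What is true, and what the paper proves, is $\sup_t P\big(\psi_{t,n}-\Deltaup^{-1}f^{(N)}_t\ast\FII{\varphi^{-1}(-\cdot)}\big)^2\to 0$, i.e.\ $L^2(P)$-convergence; and controlling the empirical process of a class that changes with $n$ using only $L^2(P)$-smallness requires the triangular-array bracketing CLT (Theorem 2.11.23 in van der Vaart--Wellner), with a uniform-in-$n$ bracketing entropy bound obtained from the fact that all $\psi_{t,n}$ lie in a fixed ball of the space of bounded-variation functions. Moreover, the $L^2(P)$ control itself is the hardest part of the argument: one must split both $P$ and $\FII{\varphi^{-1}(-\cdot)}$ into discrete and absolutely continuous parts, use that the discrete parts charge only $\Z$ together with the alignment $|u(t,n)-j|\ge\varepsilon_n$ built into \eqref{defftn} to get bounds of order $(h_n/\varepsilon_n)^{\eta-1}$ at the atoms, and invoke Assumption \ref{enum:th:ass1b} with the calibration $2\vartheta_\varepsilon>1/\alpha$ to handle the absolutely continuous part. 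In your write-up these assumptions are charged only to the bias and the remainder; they are equally essential to the stochastic term, and without this analysis the "standard equicontinuity argument" you invoke has nothing to run on.
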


\subsubsection{Discussion of efficiency of Theorem \ref{ThmNF}}

Nickl and Rei{\ss} \cite{NR12} constructed an efficient estimator and showed a central limit theorem for the generalised L{\'e}vy distribution $N^{G}(t):=N(t)$ if $t<0$ and $N^{G}(t):=\int_{t}^{\infty}d\nu$ if $t>0$. Indeed, when $t<0$ the variance of $\mathbb B^N$ coincides with that of their limiting process; on the positive real line they do not agree simply because the quantities being estimated are different. As they remark and we further illustrate with our result on $F$, the limiting covariances are intuitively appealing when compared to the classical Donsker theorem. Furthermore, they agree with those in \cite{ST12}, where a functional central limit theorem for the classical deconvolution problem is shown.

The truncation at the origin induced by the indicator $\1_{\R\setminus\{0\}}$ in $f^{(N)}_t$ and $f^{(F)}_t$ guarantees that our estimators are efficient\footnote{In personal communication with M. Trabs, it was confirmed that the function $\1_{(-\infty,t]}$ in Corollary 4.5 in \cite{T15} should be $f^{(N)}_t$ for their results to be correct, and $\tilde \chi_\nu$, defined after the corollary, should equal $\lambda f_t^{(F)}$. We also confirmed the covariance of the limiting process in the general case considered in \cite{BG03} does not coincide with $\Sigma_{s,t}^{F}$.}: the conclusion for $\widehat{N}_n$ follows by the same arguments used right before \eqref{decsigmalambda} to conclude efficiency of $\hat{\lambda}_n$; and the justification of \eqref{eqcovF} being optimal requires some clarification which we believe gives further understanding of the efficiency of estimators of $F$, not addressed in \cite{T15}. There, it is shown that the lower bound for the asymptotic covariance when estimating $N$ with $\lambda$ known is $\lambda^2 \Sigma_{s,t}^{F}$ and, consequently, the lower bound for estimating $F$ in that setting must be $\Sigma_{s,t}^{F}$. Yet, the estimation problem when $\lambda$ is unknown is harder and $\Sigma_{s,t}^{F}$ must also be a lower bound in this more complex setting. Furthermore, because we are considering an even more involved framework in which $\nu$ may have a discrete component that results in exact cancellations, the Cram{\'e}r--Rao bound is $\Sigma_{s,t}^{F}$ and our estimator is efficient. Two conclusions can be drawn from this analysis: when $\lambda$ is unknown, the problems of estimating $N$ and $F$ are different from an information-theoretic point of view; and, when knowledge of $\lambda$ is available, our estimators in \eqref{eqRescEst} need not be modified. In particular, $\hat \lambda_n$ should not be substituted by $\lambda$ as efficiency is lost otherwise. In this setting $N$ should be estimated by $\lambda \widehat{F}_n$ and not by $\widehat{N}_n$. This way we input our knowledge of $\lambda$ and in view of the remarks we just made the estimator attains the information-theoretic lower bound developed in \cite{T15}. Clearly, $\widehat{N}_n$ would not attain it in this setting. Knowledge of $\gamma$ does not change any of the information bounds and thus $N$ contains all the information about the underlying process.

The main difference between \eqref{eqcovN} and \eqref{eqcovF} is the appearance of a negative extra summand in the latter. It implies that the first limiting process is of Brownian motion type whilst the second is of Brownian bridge type. This is best seen when $\lambda \Deltaup$ is small since, by Lemma \ref{lemmaAsympSigma}, 
\begin{equation*}
	\lambda \Deltaup \Sigma_{s,t}^{N} \approx  \lambda^2 F (\min\{ s,t\} ) \quad \mbox{ and } \quad  \lambda \Deltaup \Sigma_{s,t}^{F} \approx  F (\min\{ s,t\} )  - F(s) F(t).
\end{equation*}
The first approximation agrees with the results developed by Nickl et al. in \cite{NRST16}, and 
the second with Donsker's theorem because it implies that for $n$ large and $\lambda{\Deltaup}$ small, $\sqrt{\lambda  {\Deltaup}  n} \,  \big(\widehat{F}_n - F \big)$ is approximately an $F$-Brownian bridge. 
We emphasise that last display is another consequence of the appearance of $\1_{\R\setminus\{0\}}$ in $f^{(N)}_t$ and $f^{(F)}_t$, as it annihilates the leading term in Lemma \ref{lemmaAsympSigma}. 
In view of the remark in the paragraph right before \eqref{eqLambdahat}, it comes from our way of (automatically) discarding the observations with no jumps, which therefore only enter the estimators through $n$ or, equivalently, through the only useful information they contain: the number of them in a sample of size $n$. 
For the case of $\lambda\Deltaup$ general, the difference between $\mathbb B^N$ and $\mathbb G^F$ can also be concluded from their covariances: since all measures in $\Sigma_{t,t}^{N}$ and $\Sigma_{t,t}^{F}$ are finite, it can be argued that the limit of the variances as $t\to \pm \infty$ can be transferred to $f_t^{(N)}$ and $f_t^{(F)}$, and
\begin{equation*}
	\lim_{t\to -\infty} \Sigma_{t,t}^{N}=\lim_{t\to \pm \infty} \Sigma_{t,t}^{F}=0 \quad \mbox{and} \quad \lim_{t\to \infty} \Sigma_{t,t}^{N}=\sigma_{\lambda}^2. 
\end{equation*}
This has an intuitive explanation: from the expressions for $\widehat{N}_n(t)$ and $\hat \lambda_n$, $\lim_{t\to \infty} \widehat{N}_n(t) = \hat \lambda_n$ and, by Proposition \ref{Proplambdamu}, its difference with $\lim_{t\to \infty} N(t)=\lambda$ multiplied by $\sqrt{n}$ converges to a non-degenerate centred normal distribution as $n\to \infty$. This approximate normality as $t\to \infty$, and thus the stochasticity of $\widehat{N}_n$ arising from the lack of knowledge of $\lambda$, disappears when dividing $\widehat{N}_n$ by $\hat \lambda_n$. Hence, $\sqrt{n} \big(\widehat{F}_n-F\big)$ and its limit process are tied down to zero at infinity transforming the previous Brownian motion type of process into a Brownian bridge type of process. 

These ideas allow us to show that the general estimator of $F$ in \cite{BG03} is not efficient. It is constructed assuming knowledge of $\lambda$ and directly inputting its value. Recall that in \cite{BG03} they assume $\supp(F)\subseteq\R^{+}$. Taking $t\geq 0$, the variance of their limiting process is given by 
\begin{align*}
	\frac{1}{(\lambda \Deltaup)^2} \!\int_0^t \!\int_0^t \!\Big( G_0\big( (t\!-\!x) \!\wedge\! (t\!-\!y) \big) \!-\! G_0(t\!-\!x) G_0(t\!-\!y) \Big) \FII{\varphi^{-1}}(dx) \, \FII{\varphi^{-1}}(dy),
\end{align*}
where $G_0(x):=\int_{(0,x]} P$. This is zero at $t=0$ and, noting that $\lim_{x\to\infty}G_0(x)=1-P(\{0\})=1-e^{-\lambda \Deltaup}$ and recalling that $\FII{\varphi^{-1}(-\cdot)}$ has mass $1$, its limit as $t\to \infty$ is $e^{-\lambda \Deltaup} \left(1-e^{-\lambda \Deltaup}\right)/(\lambda \Deltaup)^2$. This is strictly positive and disagrees with the behaviour of $\Sigma_{t,t}^{F}$ as $t\to \infty$, so their estimator is not efficient. Indeed, the limiting process is of Brownian motion type and, as in our estimator of $F$, inputting the value of $\lambda$ when it is known leads to a suboptimal limiting process.

\subsubsection{Joint convergence of the estimators}

All the results included so far are particular cases of the following, which establishes joint convergence of all the estimators and is our main result. Prior to stating it let us introduce some notation. For any $\boldsymbol L\in \R^4 \times \Big(\R^{\Z\setminus\{0\}}\Big)^2 \times \big(\ell^{\infty}(\R)\big)^2 \simeq \R^{\N}\times \ell^{\infty}(\R)^2$ we denote its first four coordinates by $\boldsymbol L_{\lambda}, \boldsymbol L_{\gamma}, \boldsymbol L_{q}, \boldsymbol L_{p}$, its $j$-th coordinate within the first and second space $\R^{\Z\setminus\{0\}}$ by $\boldsymbol L_{q_j}$ and $\boldsymbol L_{p_j}$, respectively, and the evaluation at $t\in \R$ of its penultimate and last coordinate by $\boldsymbol L_{N}(t)$ and $\boldsymbol L_F(t)$, respectively. Finally, let $\boldsymbol q, \boldsymbol p \in \R_{+}^{\Z\setminus\{ 0\}}$ be row vectors with $j$-th entry equal to $q_j$ and $p_j$, respectively, and let $\hat{\boldsymbol{q}}_n$ and $\hat{\boldsymbol{p}}_n$ be their coordinate-wise estimators.

\begin{thm}\label{ThmMain}
	Let $\mathbb L$ be a tight centred Gaussian random variable on $ \R^{\N} \times \ell^{\infty}(\R)^2$ and note that it is fully characterised by its finite dimensional distributions. For any $c_{(\lambda)}, c_{(\gamma)}, c_{(q)}, c_{(p)} \in \R$, $M_\theta\in \N$, $\boldsymbol C_{\theta}\in \R^{M_{\theta}}$, where $\theta\in\{ q,p,N, F\}$, and any $i_1, \ldots, i_{M_q}, j_1, \ldots, j_{M_p} \in \N,  s_{1}, \ldots, s_{M_N},$ $t_1, \ldots, t_{M_F}\in \R$, assume $\mathbb L$ satisfies that
	\begin{equation*}
		\big(c_{(\lambda)}, c_{(\gamma)}, c_{(q)}, c_{(p)} \big) \!\left( \!
		\begin{array}{c}
			\mathbb{L}_{\lambda} \\
			\mathbb{L}_{\gamma} \\
			\mathbb{L}_{q} \\
			\mathbb{L}_{p} 
		\end{array}
		\! \right) 
		+\boldsymbol C_q\!\left(\!
		\begin{array}{c}
			\mathbb{L}_{q_{i_1}} \\
			\vdots \\
			\mathbb{L}_{q_{i_{M_q}}} 
		\end{array}
		\!\right) 
		+\boldsymbol C_p \! \left(\!
		\begin{array}{c}
			\mathbb{L}_{p_{j_1}} \\
			\vdots \\
			\mathbb{L}_{p_{j_{M_p}}} 
		\end{array}
		\!\right) 
		+\boldsymbol C_N\!\left(\!
		\begin{array}{c}
			\mathbb{L}_{N}(s_1) \\
			\vdots \\
			\mathbb{L}_{N}(s_{M_{N}}) 
		\end{array}
		\!\right) 
		+ \boldsymbol C_F \!
		\left(\!
		\begin{array}{c}
			\mathbb{L}_{F}(t_1) \\
			\vdots \\
			\mathbb{L}_{F}(t_{M_F})
		\end{array}
		\!\right) 
	\end{equation*}
	is a one-dimensional normal random variable with mean zero and variance
	\begin{align}\label{eqVarL}
		\sigma_{\mathbb{L}}^2:=\frac{1}{{\Deltaup}^2}  \int_{\R} \big( f^{(\mathbb{L})} \ast {\cal F}^{-1} \left[ \varphi^{-1}(-\cdot) \right] (x) \big)^2 \, P(dx),
	\end{align}
	where, writing $f^{(q)}:=\sum_{j\in\Z\setminus\{0\}} f^{(q_j)}$ and $f^{(p)}:=\sum_{j\in\Z\setminus\{0\}} f^{(p_j)}=\lambda^{-1}(f^{(q)} - pf^{(\lambda)})$, we define 
	\begin{equation*}
		f^{(\mathbb{L})}:=\big(c_{(\lambda)}, c_{(q)}, c_{(p)}\big) \! \left( \!
		\begin{array}{c}
			f^{(\lambda)} \\
			f^{(q)} \\
			f^{(p)} 
		\end{array}
		\!  \right)
		+ \boldsymbol C_q\! \left(\!
		\begin{array}{c}
			f^{(q_{i_1})} \\
			\vdots \\
			f^{(q_{i_{M_q}})} 
		\end{array}
		\!\right) 
		+ \boldsymbol C_p \!\left(\!
		\begin{array}{c}
			f^{(p_{j_1})} \\
			\vdots \\
			f^{(p_{j_{M_p}})} 
		\end{array}
		\!\right) 
		+ \boldsymbol C_{N} \!\left(\!
		\begin{array}{c}
			f^{(N)}_{s_1} \\
			\vdots \\
			f^{(N)}_{s_{M_{N}}}
		\end{array}
		\!\right) 
		+ \boldsymbol C_{F} \!\left(\!
		\begin{array}{c}
			f^{(F)}_{t_1} \\
			\vdots \\
			f^{(F)}_{t_{M_F}}
		\end{array}
		\!\right).
	\end{equation*}
	
	\begin{enumerate} 
		\item Under the assumptions of Proposition \ref{Proplambdamu} we have that as $n\to \infty$
		\begin{equation*}
			\sqrt{n} \left(
			\hat \lambda_n -\lambda, \,
			h_n^{-1}(\hat \gamma_n-\gamma),\,
			\hat{\boldsymbol{q}}_n-\boldsymbol q ,\,
			\hat{\boldsymbol{p}}_n-\boldsymbol p ,\,
			\widehat{N}_n - N ,\,
			\widehat{F}_n - F 
			\right) \to^{\mathcal{L}^{\times}} \mathbb{L}_{-q,-p},
		\end{equation*}
		where $\mathbb{L}_{-q,-p}$ denotes the same random variable as $\mathbb{L}$ without its third and fourth coordinates. 
		
		\item Suppose $\nu$ satisfies $\int_{\R}\! |x|^{\beta} \nu(dx)\!<\!\infty$ for some $\beta\!>\!1$ and Assumption \ref{enum:th:ass1} for some $\alpha\!>\!8\beta/(\beta\!-\!1)$. 
		Assume $K\!$ is as in \eqref{conditionsK} and let $h_n \!\sim \!\exp(-n^{\vartheta_h}\!), \varepsilon_n \!\sim\!\exp(-n^{\vartheta_\varepsilon}\!), H_n\! \sim\!\exp(n^{\vartheta_H}\!)$ and $\widetilde H_n\! \sim\! n^{\vartheta_{\widetilde{H}}}\!$, where $1/(2\beta)\!\leq\! \vartheta_{\widetilde{H}} \!<\! 1/2$, $ 2/\alpha\!<\!2\vartheta_{\varepsilon} \!\leq\! \vartheta_{h} \!<\!(1\!-\!2\vartheta_{\widetilde{H}})/4$ and $ 0\!<\!\vartheta_{H}\!<\! \vartheta_{h}$. Then, as $n\!\to\! \infty$,
		\begin{equation*}
			\sqrt{n} \left(
			\hat \lambda_n -\lambda,\,
			h_n^{-1}(\hat \gamma_n-\gamma),\,
			\hat q_n - q,\,
			\hat p_n - p,\,
			\hat{\boldsymbol{q}}_n-\boldsymbol q ,\,
			\hat{\boldsymbol{p}}_n-\boldsymbol p ,\,
			\widehat{N}_n - N ,\,
			\widehat{F}_n - F 
			\right)
			\to^{\mathcal{L}^{\times}} \mathbb{L}.
		\end{equation*}
	\end{enumerate}
\end{thm}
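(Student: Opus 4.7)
\textbf{Proof plan for Theorem \ref{ThmMain}.} The natural strategy is to deduce joint convergence in the product space $\R^{\N}\times \ell^{\infty}(\R)^2$ from two ingredients: (i) convergence of all finite-dimensional projections to the corresponding Gaussian projections of $\mathbb L$, and (ii) asymptotic tightness of the two $\ell^{\infty}(\R)$-valued marginals $\sqrt n(\widehat N_n-N)$ and $\sqrt n(\widehat F_n-F)$. By Cram\'er--Wold, (i) reduces to showing that every real linear combination of the form displayed in the theorem (indexed by the coefficients $c_{(\cdot)},\boldsymbol C_{\theta}$ and the points $i_{\bullet},j_{\bullet},s_{\bullet},t_{\bullet}$) is asymptotically $N(0,\sigma_{\mathbb L}^2)$. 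I would therefore first define $\mathbb L$ directly through the covariance induced by \eqref{eqVarL} and the polarisation formula, check consistency of these finite-dimensional distributions (a Kolmogorov extension), and note that once (ii) is established the restriction of $\mathbb L$ to the two $\ell^{\infty}(\R)$ components is a tight centred Gaussian.

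For the finite-dimensional step, the key device is a unified linearisation valid for all the estimators at once. Each estimator has the form $\Deltaup^{-1}\int_{\R} f_n(x)\,\mathcal F^{-1}[\Log(\varphi_n)\mathcal F K_{h_n}](x)\,dx$ for an appropriate truncated $f_n\to f^{(\theta)}$ (with $f^{(\theta)}\in\{f^{(\lambda)},f^{(q_j)},f^{(p_j)},f^{(N)}_t,f^{(F)}_t,\ldots\}$). Using \eqref{eq:boundvarphi} and the uniform control $\sup_{|u|\le h_n^{-1}}|\varphi_n(u)-\varphi(u)|\to 0$ in $\Pr$ (which holds under the chosen exponentially decaying $h_n$), one has $\Log(\varphi_n)-\Log(\varphi)=(\varphi_n-\varphi)/\varphi+R_n$ with $|R_n|\lesssim|\varphi_n-\varphi|^2$ uniformly on $[-h_n^{-1},h_n^{-1}]$. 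Plugging this in and applying Parseval yields, after rearrangement,
\begin{equation*}
\hat\theta_n-\theta \;=\;\frac{1}{n\Deltaup}\sum_{k=1}^{n}\Bigl(f^{(\theta)}\ast \mathcal F^{-1}[\varphi^{-1}(-\cdot)](Z_k)-\E\bigl[f^{(\theta)}\ast \mathcal F^{-1}[\varphi^{-1}(-\cdot)](Z)\bigr]\Bigr)\;+\;r_n(\theta),
\end{equation*}
where $r_n(\theta)=o_{\Pr}(n^{-1/2})$ uses a combination of: the remainder estimate $\|R_n\|_{L^2}=O_{\Pr}(1/\sqrt n)$ on the truncated frequency band, a standard deterministic kernel-approximation bound $f^{(\theta)}_n\ast K_{h_n}\to f^{(\theta)}$ governed by Assumption \ref{enum:th:ass1b} (which dictates $1/\alpha<2\vartheta_{\varepsilon}\le\vartheta_h$), and the tail truncations at $H_n$ and $\varepsilon_n$ controlled by Assumption \ref{enum:th:ass2}. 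Linearity in $\theta$ means a finite linear combination gives the same representation with $f^{(\theta)}$ replaced by $f^{(\mathbb L)}$, so the claim follows from the classical Lindeberg CLT applied to the i.i.d.\ bounded variables $g^{(\mathbb L)}(Z_k):=f^{(\mathbb L)}\ast\mathcal F^{-1}[\varphi^{-1}(-\cdot)](Z_k)/\Deltaup$, whose variance is exactly $\sigma_{\mathbb L}^2$ by definition. The drift $\hat\gamma_n$ is handled by the same linearisation, but because $f^{(\gamma)}_n(x)=c^{-1}x\1_{|x|<h_n}$ carries an extra factor $h_n$, the rescaling $h_n^{-1}$ produces a contribution whose variance vanishes; hence $\mathbb L_\gamma\equiv 0$ a.s., matching Proposition \ref{Proplambdamu}.

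The main obstacle, and the heart of part~1, is the tightness of $\sqrt n(\widehat N_n-N)$ (and hence of $\sqrt n(\widehat F_n-F)$ by the delta method in $\ell^{\infty}(\R)$ together with the already-proven consistency of $\hat\lambda_n$) as a process indexed by $t\in\R$. I would follow the Gin\'e--Zinn--Nickl--Rei\ss\ route used in \cite{NR12}: after the linearisation above, the stochastic part of $\sqrt n(\widehat N_n(t)-N(t))$ equals $n^{-1/2}\sum_k g_t(Z_k)$ modulo a uniform $o_{\Pr}(1)$ remainder, where $g_t=\Deltaup^{-1}f^{(N)}_{t,n}\ast \mathcal F^{-1}[\varphi^{-1}(-\cdot)]$. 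The function $f_{t,n}^{(N)}$ in \eqref{defftn} is an indicator of a half-line modified on $\varepsilon_n$-neighbourhoods of the atoms, and convolution with the finite signed measure $\mathcal F^{-1}[\varphi^{-1}(-\cdot)]$ in \eqref{eqFIPhim1nP} yields a class of uniformly bounded functions whose bracketing $L^2(P)$-entropy is $O(\log(1/u))$ as $u\downarrow 0$ (indicator half-lines convolved against a finite measure). Ossiander's bracketing CLT then gives asymptotic equicontinuity of the partial sum process, hence tightness in $\ell^{\infty}(\R)$; the uniform control of the remainder requires the same kind of kernel approximation estimates used for the marginals, upgraded to hold uniformly in $t$, which is where Assumption \ref{enum:th:ass1b} with $\alpha>4$ is needed. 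For $\widehat F_n$ the functional delta method for the map $(N,\lambda)\mapsto N/\lambda$ at $(N,\lambda)$, jointly with $\hat\lambda_n\to^{\Pr}\lambda$, transforms the limit to $\mathbb G^F$ and preserves tightness.

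For part~2 the only additional issue is the truncated sums $\hat q_n=\sum_{|j|\le\widetilde H_n/\varepsilon,\,j\ne 0}\hat q_{j,n}$ and $\hat p_n=\hat\lambda_n^{-1}\hat q_n$. One writes $\hat q_n-q=\sum_j(\hat q_{j,n}-q_j)-\sum_{|j|>\widetilde H_n/\varepsilon}q_j$; the deterministic tail is $O(\widetilde H_n^{-\beta})=o(n^{-1/2})$ by the stronger moment assumption $\int|x|^\beta\nu<\infty$ provided $\vartheta_{\widetilde H}\ge 1/(2\beta)$, and the stochastic sum fits into the same linearisation with $f^{(q)}_n=\sum_{|j|\le\widetilde H_n/\varepsilon}\1_{|x-J_j|<\varepsilon_n}\to f^{(q)}$. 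The relaxed bandwidth constraint $\vartheta_h<(1-2\vartheta_{\widetilde H})/4$ arises precisely because the $L^2$-norm of $f^{(q)}_n$ grows with $\widetilde H_n$, requiring a slightly slower bandwidth to keep the kernel-approximation error negligible. With this, the very same Cram\'er--Wold argument as above, extended to also include the two new scalar coordinates, completes the proof.
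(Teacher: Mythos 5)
Your proposal is correct and follows essentially the same route as the paper: reduce joint convergence in the product space to convergence of finite-dimensional projections (via Cram\'er--Wold and the spectral linearisation $\Log(\varphi_n/\varphi)=(\varphi_n-\varphi)\varphi^{-1}+R_n$ feeding a Lindeberg CLT) plus asymptotic tightness of the $\ell^\infty(\R)$ marginals obtained from a bracketing-entropy empirical-process argument, with Slutsky-type arguments absorbing the $\hat\lambda_n^{-1}$ factors in the $p$- and $F$-coordinates and the $\widetilde H_n$-truncation handled exactly as you describe for part (b). The only cosmetic differences are that the paper identifies $\mathbb L$ as the unique limit via Prohorov and a separating vector lattice rather than constructing it a priori by Kolmogorov extension, and it invokes Theorem 2.11.23 of van der Vaart--Wellner with a bounded-variation entropy bound rather than Ossiander's theorem.
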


A large number of inference procedures can be derived from this theorem either by applying bootstrap methods or by using `Studentisation' ideas. These include estimation, confidence regions, goodness of fit tests and two sample tests for single and several parameters at once. 
Since we assume that $\nu$ has at most two components, tests of the presence of each of them can be easily derived from the central limit theorems for $q$ or $p$. We refer the reader to Chapter 4 in \cite{C16} for more details of the construction, implementation and practical performance of all of these procedures. There, it is emphasised that, even though the distinguished logarithm featuring in the estimators can be implemented, the simpler and faster-to-compute principal branch of the complex logarithm can be used in most realistic cases when the value of $\lambda$ is moderate and $\gamma=0$.

We require the stronger tail assumption of part (b) to show convergence of the finite-dimensional distributions and hence regard it as purely technical: in part (a), we need to control a finite fixed number of projections of the quantity considered therein but, in part (b), we effectively have to control a growing number of projections due to the expressions for $\hat q_n$ and $\hat p_n$. This means $\widetilde H_n$ cannot grow exponentially but polynomially and, in turn, implies a finite moment condition is needed to control bias terms. The latter results in a parameter-free lower bound for $\vartheta_{H}$ and we remark that if such condition is imposed on part (a) or if Assumption \ref{enum:th:ass1b} is strengthened to H{\"o}lder regularity of order $\alpha>0$, the lower bounds on $\vartheta_{H}$ in part (a) or on $\vartheta_{\varepsilon}$ in both parts, respectively, simplify to $0$; if one assumes such regularity and tail decay are present in $F$, none of the bounds for the hyperparameters, including that for the bandwidth, depend on the unknown parameters $\alpha$ and $\beta$. This also applies to the bounds in the previous propositions.

\section{Proofs} \label{SecProofs}

Joint convergence as described in Theorem \ref{ThmMain} follows from convergence of each of the coordinates of the infinite vectors therein together with  joint convergence of the one-dimensional parameters and the finite-dimensional distributions of the infinite-dimensional parameters. Therefore we need to prove joint convergence of finitely and infinitely many one-dimensional distributions. This is the content of Section \ref{SecProofOneDimDns} and it includes several results that may be of independent interest. Section \ref{SecProofProps} is devoted to proving Propositions \ref{Proplambdamu} and \ref{Proppsqs}, which follow from the results of Section \ref{SecProofOneDimDns}, and in Sections \ref{SecProofNF} and \ref{SecProofJointConv} we prove Theorems \ref{ThmNF} and \ref{ThmMain}, respectively. Lastly, we include the proof of Lemma \ref{lemmaAsympSigma}. From now on we take $\varepsilon=1$ in Assumption \ref{enum:th:ass1a} for notational simplicity.

\subsection{Joint convergence of one-dimensional distributions} \label{SecProofOneDimDns}

The last and main result of this section requires to identify the asymptotic limit of some stochastic quantities and to control some non-stochastic quantities. We follow this order throughout and before introducing the generic central limit theorem dealing with the stochastic quantities, namely Theorem \ref{ThmGenericCLT}, we develop several auxiliary results. 

Note that the observations of the compound Poisson process appear in our estimators through the empirical characteristic function $\varphi_n$. Since it is always multiplied by $\FT K_{h_n}$, supported in $[-h_n^{-1}, h_n^{-1}]$, we have to control the uniform norm of $|\varphi_n(u) - \varphi(u)|$ when $u$ varies over these sets. Theorem 4.1 in \cite{NR09} gives sufficient conditions to control this quantity and derivatives of it. In particular, it guarantees that if $\int |x|^{\beta} \nu (x) dx< \infty$ for some $\beta>0$ then, under the notation at the end of Section \ref{SecDefNot},
\begin{equation}\label{PhintophiIdent}
	\sup_{\abs{u}\le h_n^{-1}}\abs{\varphi_n(u)-\varphi(u)}=O_{\Pr}\big(n^{-1/2}\log^{1/2+\delta} (h_n^{-1}) \big) \quad \mbox{ for any $\delta>0$.}
\end{equation}
The assumption for this particular case of the result can be slightly refined as follows.

\begin{thm} \label{ThmLogMoment}
	Suppose that $\xi_1, \xi_2, \ldots$ are independent and identically distributed real-valued random variables satisfying $\mathbb{E}[\log^\beta ( \max\{|\xi_1|,e\})]<\infty$ for some $\beta>1$ and let $w: \R \to [0,\infty)$ be a weight function with $w(u) \leq \log^{-1/2-\delta}(e+|u|)$ for some $\delta>0$. Then we have that
	\begin{equation}\label{PhinPhiIdent}
		\sup_{n \geq 1} \, \mathbb{E}  \Big[  \, \sup_{u\in \R} \left\{ \sqrt{n}\, |\varphi_n(u) - \varphi(u)| \,  w(u) \right\} \Big] <\infty,
	\end{equation}
	and, from Markov's inequality, \eqref{PhintophiIdent} holds.
\end{thm}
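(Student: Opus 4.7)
The approach is to control the weighted empirical characteristic process via a chaining argument based on a modulus of continuity for $\varphi$ that is derived from the logarithmic moment assumption.

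\textbf{Step 1 (Modulus of continuity of $\varphi$).} I would first establish that
\[
\|e^{iu\xi_1} - e^{iv\xi_1}\|_{L^2(\Pr)}^2 = 2(1 - \Re \varphi(u-v)) \lesssim \log^{-\beta}\!\big(e+|u-v|^{-1}\big).
\]
Indeed, for any truncation level $M \ge e$, the bound $1-\cos(h\xi_1) \le \min(h^2\xi_1^2,2)$ gives
\[
1 - \Re\varphi(h) \le h^2 M^2 + 2\Pr(|\xi_1|>M) \le h^2 M^2 + C/\log^\beta M
\]
by Markov's inequality applied to the log-moment. Optimising in $M$ yields the displayed bound.

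\textbf{Step 2 (Entropy of the weighted class).} Splitting into real and imaginary parts, it suffices to bound $\mathbb E\sup_u w(u)\sqrt n |\Re(\varphi_n(u)-\varphi(u))|$ and the analogous quantity for $\Im$. Consider the uniformly bounded, centred class $\mathcal G := \{x \mapsto w(u)(e^{iu x} - \varphi(u)) : u \in \R\}$ with envelope $\le 2w(0)$. By the triangle inequality,
\[
\|g_u - g_v\|_{L^2(\Pr)} \lesssim |w(u)-w(v)| + w(u\vee v) \log^{-\beta/2}\!\big(e+|u-v|^{-1}\big).
\]
I would decompose $\R$ into dyadic shells $B_j = \{u : 2^{j-1}\le|u|<2^j\}$ (with $B_0 = [-1,1]$). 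Only the shells with $w_j := \sup_{u \in B_j} w(u) \gtrsim (j+1)^{-1/2-\delta} \ge \varepsilon$ need to be covered (the remaining ones are $\varepsilon$-covered by the zero function, since $\|g_u\|_{L^2} \le w(u) < \varepsilon$), giving at most $J_\varepsilon \asymp \varepsilon^{-2/(1+2\delta)}$ active shells. Within each active shell, the intra-shell entropy inherited from Step~1 contributes $\log N_j(\varepsilon) \lesssim j + (w_j/\varepsilon)^{2/\beta}$, whose maximum over $j \in [0, J_\varepsilon]$ is achieved at an endpoint and is $\lesssim \varepsilon^{-2/\beta}+\varepsilon^{-2/(1+2\delta)}$. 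The union bound over shells then gives
\[
\log N(\varepsilon, \mathcal G, L^2(\Pr)) \lesssim \varepsilon^{-2/\beta} + \varepsilon^{-2/(1+2\delta)}.
\]

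\textbf{Step 3 (Chaining).} Applying standard symmetrisation and Dudley's entropy bound for uniformly bounded, centred empirical processes (see, e.g., Chapter~2.5 in \cite{AvdVW96}),
\[
\sup_{n\ge 1}\mathbb E\!\left[\sup_u w(u)\sqrt{n}|\varphi_n(u)-\varphi(u)|\right] \lesssim \int_0^{2w(0)}\!\! \sqrt{\log N(\varepsilon,\mathcal G,L^2(\Pr))}\,d\varepsilon \lesssim \int_0^{2w(0)}\!\!\big(\varepsilon^{-1/\beta} + \varepsilon^{-1/(1+2\delta)}\big) d\varepsilon.
\]
This last integral is finite precisely when $\beta>1$ and $\delta>0$, which are the hypotheses. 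This establishes \eqref{PhinPhiIdent}. For \eqref{PhintophiIdent}, apply Markov's inequality together with $\inf_{|u|\le h_n^{-1}} w(u) \gtrsim \log^{-1/2-\delta}(h_n^{-1})$.

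\textbf{Main obstacle.} The heart of the proof is Step~2: matching the decay rate of $w$ at infinity against the very weak (merely poly-logarithmic) modulus of continuity of $\varphi$. The two exponents $2/\beta$ and $2/(1+2\delta)$ in the entropy bound come from these two independent sources, and the assumptions $\beta>1,\delta>0$ are exactly what make the Dudley integral converge; any weakening would break the argument.
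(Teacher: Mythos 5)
Your proposal is in substance the same argument as the paper's: the paper follows Neumann and Rei{\ss} \cite{NR09} and Yukich \cite{Y85}, whose entropy bound is driven by exactly the quantity you compute in Step 1 — the truncation level $M$ at which $\Pr(|\xi_1|>M)\le \epsilon^2$, bounded via Markov's inequality by $\exp\big(\epsilon^{-2/\beta}\,\E[\log^\beta(\max\{|\xi_1|,e\})]^{1/\beta}\big)$ — and the conclusion is that the resulting entropy integral converges precisely when $\beta>1$. The paper only modifies this single step of \cite{NR09} and outsources the treatment of the weight $w$ entirely to that reference, whereas you reconstruct the whole entropy computation, including the dyadic-shell handling of $w$ (which correctly produces the second exponent $2/(1+2\delta)$ and explains the role of $\delta>0$); your version is therefore more self-contained but not different in strategy. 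One step needs repair: the maximal inequality $\E\sup_{\mathcal G}|\sqrt n(P_n-P)g|\lesssim\int_0^\sigma\sqrt{\log N(\epsilon,\mathcal G,L^2(P))}\,d\epsilon$ is false with plain $L^2(P)$ covering numbers — after symmetrisation the Rademacher process is sub-Gaussian for the \emph{empirical} $L^2(P_n)$ metric, not $L^2(P)$ — and the correct tool is the bracketing version (Corollary 19.35 in \cite{AvdV98}, which is what \cite{NR09} and the paper invoke), with $N_{[\,]}(\epsilon,\mathcal G,L^2(P))$. Fortunately your cover converts directly into brackets: partitioning each active shell into intervals $I$ and using the pointwise oscillation bound $\sup_{u,v\in I}|e^{iux}-e^{ivx}|\le\min\{|I|\,|x|,2\}$ (rather than the two-point identity $2(1-\Re\varphi(u-v))$) yields brackets whose $L^2(P)$ size obeys the same estimate $|I|^2M^2+\Pr(|\xi_1|>M)$, so the entropy bounds and the convergence condition $\beta>1$, $\delta>0$ are unchanged. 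A second, minor point: $w$ is not assumed continuous, so the term $|w(u)-w(v)|$ in your Step 2 is not controllable in general; simply replace $w$ by the dominating weight $\log^{-1/2-\delta}(e+|\cdot|)$ at the outset, which can only increase the supremum in \eqref{PhinPhiIdent}.
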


Taking $\xi_k\!=\!Z_k$, $k=1, \ldots, n$, the assumption of the theorem is that $\int_{\R} \log^\beta (\max\{|x|, e\})  P(dx)\!<\!\infty$ for some $\beta>1$. By Theorem 25.3 and Proposition 25.4 in \cite{S99} it is equivalent to assuming the same finite moment condition on $\nu$ and it is thus satisfied under Assumption \ref{enum:th:ass2}. 
We show this theorem by slightly refining one of the steps in the proof of Theorem 4.1 in \cite{NR09}. Therefore, we first give a brief overview of their ideas and take no credit for them. 

\begin{proof} 
	The strategy Neumann and Rei{\ss} \cite{NR09} adopt to prove the result is to control the expectation of the supremum over $u \in \R$ of the empirical process $\sqrt{n} \exp(i\cdot u) w(u) (P_n-P)$ by a maximal inequality from empirical process theory. Namely, they make use of Corollary 19.35 from \cite{AvdV98} and thus they have to control the $L^2$-bracketing number of a certain class of functions. Inspired by Yukich \cite{Y85} they bound this bracketing number by an expression depending on the quantity 
	\begin{equation*}
		M=M(\epsilon, k):= \inf\left\{ m\geq 1:  \mathbb{E}[\xi_1^{2k} \mathds{1}_{|\xi_1|>m}] \leq \epsilon^2 \right\},
	\end{equation*}
	where $k\in \N$ is fixed and we are only concerned with the case $k=0$. By the lemma after Theorem 2 in \cite{Y85}, the theorem follows if 
	\begin{equation}\label{MCondLogMoment}
		\int_0^1 \sqrt{\log M(\epsilon,k)} \, d\epsilon <\infty.
	\end{equation}
	For $k=0$ the expectation in the definition of $M$ simplifies to $P(|\xi_1|>m)$. Then, trivially $|y|\leq \max\{|y|,e\}$ for any $y\in \R$ and, by Markov's inequality, we have for any $\beta>1$
	\begin{equation*}
		P(|\xi_1|>m) \leq \frac{\mathbb{E}[ \log^\beta( \max\{|\xi_1|, e\} ) ]} {\log^{\beta}( m)}.
	\end{equation*}
	Once an upper bound of this form is derived, the result follows because
	\begin{equation*}
		M(\epsilon, 0) \leq  \inf\left\{ m\geq 1:  \frac{\mathbb{E}[ \log^\beta( \max\{|\xi_1|, e\} ) ]}{\log^{\beta}( m)} \leq \epsilon^{2} \right\} 
	\end{equation*}
	and hence
	\begin{equation*}
		\log M(\epsilon,0) \leq \epsilon^{-2/\beta} \, \mathbb{E}[ \log^\beta(\max\{|\xi_1|, e\} ) ]^{1/\beta}.
	\end{equation*}
	Therefore, \eqref{MCondLogMoment} is satisfied when assuming $\mathbb{E}[ \log^\beta(\max\{|\xi_1|, e\} ) ]<\infty$ for some $\beta>1$. 
\end{proof}

The quantity $\Log (\varphi_n/\varphi)$ plays a central role in the proofs as it carries all the stochasticity. To apply standard tools such as the central limit theorem for triangular arrays or tools from empirical process theory we need to linearise it, in the sense of writing it as a function applied to $P_n-P$. The next result guarantees its decomposition into a linear part and a `remainder term'. 

In line with the notation introduced at the end of Section \ref{SecDefNot}, we write $\xi_n=o_{\Pr}(r_n)$ to denote that $\xi_n r_n^{-1} $ vanishes as $n\to \infty$ in $\Pr$-probability and $\xi_n=^{\Pr} \xi_n'$ denotes equality in sets of $\Pr$-probability approaching $1$ as $n\to \infty$.

\begin{thm}\label{ThmLinearisation}
	Let $\xi_k$, $k=1, \ldots, n$, be independent and identically distributed real-valued random variables with characteristic function $\varphi$ such that $\inf_{u\in \R} |\varphi(u)|>0$. Let $\varphi_n$ be the empirical version of it and take $h_n\to 0$ such that  $\log (h_n^{-1})/n^{1/(1+2\delta)} \to 0$ for some $\delta>0$. If $\mathbb{E}[\log^{\beta} (\max\{|\xi_1|, e\})]< \infty$ for some $\beta>1$ the quantity $\Log (\varphi_n(u)/\varphi(u)) \1_{|u|\leq h_n^{-1}}$ is well-defined and finite in sets of $\Pr$-probability approaching one as $n\to \infty$. Furthermore, in these sets
	\begin{equation}\label{eqStochIdent}
		\Log \frac{\varphi_n(u)}{\varphi(u)} =^{\Pr} \left( \varphi_n(u) -\varphi(u) \right) \varphi^{-1}(u) +R_n(u), \qquad \quad u \in [-h_n^{-1},h_n^{-1}], 
	\end{equation}
	where the remainder term is given by
	\begin{equation*}
		R_n(u):=R(z)|_{z=\varphi_n(u)/\varphi(u)} \qquad \mbox{with} \qquad R(z)=\int_1^z \frac{s-z}{s^2}ds
	\end{equation*}
	and it is such that $\sup_{\abs{u}\le h_n^{-1}}\abs{R_n(u)} = O_{\Pr}(n^{-1}(\log h_n^{-1})^{1+2 \bar \delta})$ for any $\bar \delta\in (0,\delta]$.
\end{thm}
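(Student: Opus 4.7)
The plan is to combine three ingredients: (i) a uniform concentration bound for $\varphi_n-\varphi$ on $[-h_n^{-1},h_n^{-1}]$, giving us that $\varphi_n/\varphi$ stays uniformly close to $1$ with probability $\to 1$; (ii) an elementary identification of the distinguished logarithm of $\varphi_n/\varphi$ with the principal branch on this small neighbourhood of $1$; and (iii) a Taylor remainder bound for $\log z$ around $z=1$ encoded in the integral $R(z)$.

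First, I would invoke Theorem \ref{ThmLogMoment} (or the displayed consequence \eqref{PhintophiIdent}) with $\xi_k$ the given i.i.d.\ variables: since $\mathbb{E}[\log^{\beta}(\max\{|\xi_1|,e\})]<\infty$ for $\beta>1$, we have
\begin{equation*}
\sup_{|u|\le h_n^{-1}}|\varphi_n(u)-\varphi(u)|=O_{\Pr}\!\big(n^{-1/2}\log^{1/2+\bar\delta}(h_n^{-1})\big)
\end{equation*}
for any $\bar\delta\in(0,\delta]$. Combining with the standing hypothesis $\inf_u|\varphi(u)|>0$ yields $\sup_{|u|\le h_n^{-1}}|\varphi_n(u)/\varphi(u)-1|=O_{\Pr}(n^{-1/2}\log^{1/2+\bar\delta}(h_n^{-1}))$, which tends to zero by the assumption $\log(h_n^{-1})/n^{1/(1+2\delta)}\to0$. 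Let $\mathcal{E}_n$ denote the event that this supremum is bounded by, say, $1/2$; then $\Pr(\mathcal{E}_n)\to 1$, and on $\mathcal{E}_n$ the map $u\mapsto \varphi_n(u)/\varphi(u)$ is continuous, takes values in the disk $\{|w-1|\le 1/2\}\subset \C\setminus(-\infty,0]$, and equals $1$ at $u=0$; so its distinguished logarithm is well defined and, by uniqueness of continuous lifts valued in a simply connected neighbourhood of $1$, coincides with the pointwise principal logarithm $\log(\varphi_n/\varphi)$.

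The second step is a direct calculus identity. For any $z$ in the slit plane,
\begin{equation*}
\int_1^z\frac{s-z}{s^2}\,ds=\big[\log s+ z/s\big]_1^z=\log z+1-z,
\end{equation*}
so $\log z=(z-1)+R(z)$. Evaluating at $z=\varphi_n(u)/\varphi(u)$ gives \eqref{eqStochIdent} on $\mathcal{E}_n$ since $(\varphi_n/\varphi - 1)=(\varphi_n-\varphi)\varphi^{-1}$.

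It remains to bound $R(z)$ uniformly for $|z-1|\le 1/2$. Parametrising $s=1+t(z-1)$, $t\in[0,1]$, one finds
\begin{equation*}
R(z)=(z-1)^2\!\int_0^1\!\frac{t-1}{(1+t(z-1))^2}\,dt,
\end{equation*}
and since $|1+t(z-1)|\ge 1/2$ on $\mathcal{E}_n$, the integral is bounded by $2$, so $|R(z)|\le 2|z-1|^2$. Plugging in $z=\varphi_n(u)/\varphi(u)$ and using the uniform concentration above,
\begin{equation*}
\sup_{|u|\le h_n^{-1}}|R_n(u)|\lesssim\sup_{|u|\le h_n^{-1}}|\varphi_n(u)-\varphi(u)|^2=O_{\Pr}\!\big(n^{-1}\log^{1+2\bar\delta}(h_n^{-1})\big),
\end{equation*}
for any $\bar\delta\in(0,\delta]$, which is the claimed rate. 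The main subtlety — and the only genuinely non-routine step — is the passage from the principal-branch Taylor expansion to the distinguished logarithm appearing in the statement; this is resolved by restricting to $\mathcal{E}_n$, where the two branches provably agree because $\varphi_n/\varphi$ never leaves a small convex neighbourhood of $1$. Everything else is a concentration input (from Theorem \ref{ThmLogMoment}) plus the quadratic bound on $R$.
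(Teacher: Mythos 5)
Your proposal is correct and follows essentially the same route as the paper: the same concentration input from Theorem \ref{ThmLogMoment} combined with $\inf_u|\varphi(u)|>0$ to confine $\varphi_n/\varphi$ to the disk $|z-1|\le 1/2$, the same identification of the distinguished logarithm with the principal branch there, and the same integral-form Taylor remainder. Your bound $|1+t(z-1)|\ge 1/2$ along the segment is a slightly cleaner way to control $R(z)$ than the paper's, which instead bounds $|R_n(u)|$ by $\max\{1,|\varphi(u)/\varphi_n(u)|^2\}|\varphi_n(u)/\varphi(u)-1|^2$ and then separately establishes a uniform lower bound on $|\varphi_n|$ over $[-h_n^{-1},h_n^{-1}]$; both yield the stated rate.
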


\begin{proof}
	From the construction of the distinguished logarithm (see Theorem 7.6.2 in \cite{C01}) we know that in the closed disk $|z-1|\leq 1/2$ it coincides with the principal branch of the complex logarithm. Thus, there it satisfies that
	\begin{equation*}
		\Log z = (z-1) + R(z),
	\end{equation*}
	where $R(z)$, the integral form of the remainder, is as in the statement of the theorem, so
	\begin{equation}\label{eq:boundremainder}
		|R(z)| \leq \sup_{s\in [1,z]} |s|^{-2} \int_{1}^z|z-s|ds \leq  \max\{1,|z|^{-2}\} |z-1|^2.
	\end{equation}
	Taking $z=\varphi_n(u)/\varphi(u)$, we see that the left hand side of \eqref{eqStochIdent} is well-defined in sets of $\Pr$-probability approaching one as $n\to \infty$ and \eqref{eqStochIdent} holds in them if we show that in such type of sets $|\varphi_n(u)/\varphi(u)-1|\leq 1/2 $ whenever $u \in [-h_n^{-1},h_n^{-1}]$. In view of the finite logarithmic moment assumption, the asymptotics of $h_n$ and Theorem \ref{ThmLogMoment}, we have that for any $\delta>0$
	\begin{equation}\label{PhintophioP1}
		\sup_{\abs{u}\le h_n^{-1}}\abs{\varphi_n(u)-\varphi(u)}=O_{\Pr}\big(n^{-1/2}\log^{1/2+\delta} (h_n^{-1}) \big)=o_{\Pr}(1).
	\end{equation}
	The conclusion then follows because, due to the strictly positive lower bounded on $|\varphi|$ in \eqref{eq:boundvarphi}, 
	\begin{equation}\label{PhintophiCond1}
		\left|\frac{\varphi_n(u)}{\varphi(u)}-1\right| \1_{|u|\leq h_n^{-1}} \leq \sup_{\abs{u}\le h_n^{-1}}\abs{\varphi_n(u)-\varphi(u)} \, \Big(\inf_{\abs{u}\le h_n^{-1}} \abs{\varphi(u)} \Big)^{-1} =o_{\Pr}(1).
	\end{equation}
	
	To show the second claim of the theorem we first note that evaluating \eqref{eq:boundremainder} at $z=\varphi_n(u)/\varphi(u)$, 
	\begin{equation}\label{IneqRemTerm}
		|R_n(u)| \leq \max\left\{1,\left|\frac{\varphi(u)}{\varphi_n(u)}\right|^{2}\right\} \left|\frac{\varphi_n(u)}{\varphi(u)}-1\right|^2.
	\end{equation}
	In view of \eqref{PhintophioP1}, the second term on the right hand side is of the required order. We claim that 
	\begin{equation}\label{eqinfphin}
		\inf_{\abs{u}\le h_n^{-1}}\abs{\varphi_n(u)}>\kappa \quad \mbox{ for every } \kappa\in \Big(0,\inf_{u \in \R}|\varphi(u)|\Big)
	\end{equation}
	in the same  $\Pr$-probability sets where \eqref{PhintophioP1} holds. Indeed, by \eqref{eq:boundvarphi} we have that for any $\epsilon>0$
	\begin{align*}
		\Pr \Big(\sup_{\abs{u}\le h_n^{-1}}\abs{\varphi_n(u)-\varphi(u)} \leq \epsilon \Big) & \leq \Pr \Big(\sup_{\abs{u}\le h_n^{-1}}\Big||\varphi_n(u)|-|\varphi(u)| \Big| \leq \epsilon \Big)\\
		& \leq \Pr \Big(|\varphi_n(u)|\geq |\varphi(u)|-\epsilon \, \text{ for all } u \in [-h_n^{-1}, h_n^{-1}] \Big)\\
		& \leq \Pr \Big(|\varphi_n(u)|\geq \inf_{u \in \R}|\varphi(u)|-\epsilon \,  \text{ for all } u \in [-h_n^{-1}, h_n^{-1}]\Big)\\
		& = \Pr \Big(\inf_{\abs{u}\le h_n^{-1}} |\varphi_n(u)| \geq \inf_{u \in \R}|\varphi(u)|-\epsilon \Big) \leq 1,
	\end{align*}
	and the conclusion follows by taking $\epsilon<\inf_{u \in \R}|\varphi(u)|$ and noting that the left hand side converges to $1$ as $n\to \infty$ by \eqref{PhintophioP1}.
\end{proof}

The following two lemmas are relatively long to prove or will be repeatedly used after and therefore we include them to improve the flow of other proofs.

\begin{lem}\label{LemmaFIphim1FTKh}
	Let $\varphi$ and $\FII{\varphi^{-1}(-\cdot)}$ be as in \eqref{eqLevyKhintchineCPP} and \eqref{eqFIPhim1nP} with $\gamma\in\R, \lambda>0$ and $\nu$ any finite measure on $\R$ satisfying $\nu(\R)=\lambda$. Let $K$ be a kernel function satisfying $K, \FT K  \in L^1$. Then for any $h>0$ fixed
	\begin{equation}\label{FIphim1FTKh=FIphim1astKh}
		\FII{\varphi^{-1}(-\cdot) \FT K_h }(x)=\FII{\varphi^{-1}(-\cdot) } \ast K_h \, (x) \quad \mbox{ for all } x\in \R
	\end{equation}
	and, consequently, if $K,g\in L^2(\R)$ we have that
	\begin{equation}\label{eqFIFTg...}
		\FII{\FT g \, \varphi^{-1}(-\cdot) \FT K_h}(x) = g \ast K_h \ast \FII{\varphi^{-1}(-\cdot)}(x) \quad \mbox{ for all } x\in \R.
	\end{equation}
\end{lem}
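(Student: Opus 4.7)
The plan is to treat $\mu:=\FII{\varphi^{-1}(-\cdot)}$ as the finite signed measure identified explicitly in \eqref{eqFIPhim1nP}; a direct computation of $\FT\mu$ from that series representation, using $\FT \bar\nu(u)=\FT\nu(-u)$ and the exponential power series, yields $\FT\mu(u)=e^{\lambda\Deltaup+iu\gamma\Deltaup-\Deltaup \FT\nu(-u)}=\varphi^{-1}(-u)$, so $\mu$ is precisely the signed measure whose Fourier transform is $\varphi^{-1}(-\cdot)$. With this identification, \eqref{FIphim1FTKh=FIphim1astKh} becomes the assertion that applying the Fourier multiplier $\FT K_h$ on the frequency side corresponds to convolving with $K_h$ on the spatial side, i.e.\ the convolution theorem for a finite signed measure and an $L^1$ function; and \eqref{eqFIFTg...} reduces to the same statement iterated with the $L^2$ function $g$.

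To prove \eqref{FIphim1FTKh=FIphim1astKh}, I would first observe that the polynomial-decay assumption on $K$ in \eqref{conditionsK} together with compact support of $\FT K$ makes $K$ (and hence $K_h$) continuous, bounded, and in $L^1$. The convolution $(\mu\ast K_h)(x)=\int K_h(x-y)\,\mu(dy)$ is therefore pointwise well-defined and defines a continuous bounded $L^1$ function, with $\norm{\mu\ast K_h}_1\le\norm{\mu}_{TV}\norm{K_h}_1$. Fubini's theorem, justified by this same total-variation bound, then gives
\[
\FT[\mu\ast K_h](u)=\FT\mu(u)\,\FT K_h(u)=\varphi^{-1}(-u)\,\FT K_h(u).
\]
Thanks to the uniform lower bound \eqref{eq:boundvarphi}, $\varphi^{-1}(-\cdot)$ is bounded on $\R$, and the support of $\FT K_h$ in $[-h^{-1},h^{-1}]$ then places this product in $L^1\cap L^\infty$. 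Fourier inversion therefore applies pointwise to $\mu\ast K_h$, proving \eqref{FIphim1FTKh=FIphim1astKh} for every $x\in\R$.

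For \eqref{eqFIFTg...}, I would bootstrap from (a). By associativity of convolution,
\[
g\ast K_h\ast\mu=g\ast(K_h\ast\mu)=g\ast\FII{\varphi^{-1}(-\cdot)\,\FT K_h},
\]
and $K_h\ast\mu$, already shown to be continuous and bounded, also lies in $L^1\cap L^2$; together with $g\in L^2$ this makes the iterated convolution a well-defined continuous $L^2$ function. On the Fourier side, $\FT g\in L^2$ and $\varphi^{-1}(-\cdot)\FT K_h$ is bounded with compact support, so by Cauchy--Schwarz their product lies in $L^1\cap L^2$; Plancherel and (a) then give
\[
\FT[g\ast K_h\ast\mu](u)=\FT g(u)\,\varphi^{-1}(-u)\,\FT K_h(u),
\]
and Fourier inversion applied to this $L^1$ right-hand side yields a continuous function equal to both sides of \eqref{eqFIFTg...}, so pointwise equality follows. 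The main obstacle is essentially bookkeeping: each step requires the right integrability class (finite signed measure versus $L^1$, $L^2$, or $L^\infty$ function) in order to justify Fubini and Fourier inversion \emph{pointwise} rather than merely almost everywhere. The compact support of $\FT K_h$ and the uniform bound on $|\varphi^{-1}|$ coming from \eqref{eq:boundvarphi} are precisely what force all intermediate quantities to be simultaneously integrable, bounded, and continuous, which is what makes this identification clean.
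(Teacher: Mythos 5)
Your proof is correct and follows essentially the same route as the paper: identify $\FII{\varphi^{-1}(-\cdot)}$ with the finite signed measure given by the exponential series (the paper justifies the interchange of the infinite sum with the Fourier transform via total-variation convergence), apply the convolution theorem for a finite measure and an $L^1$ kernel, and use that $\varphi^{-1}(-\cdot)\FT K_h\in L^1$ together with continuity of the convolution to invert pointwise. The only cosmetic difference is that you invoke the compact support of $\FT K$ and the decay from \eqref{conditionsK}, which the lemma does not assume; the stated hypotheses $K,\FT K\in L^1$ (plus $K,g\in L^2$ for the second identity) already supply everything your argument needs.
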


\begin{proof}
	By the expression for $\varphi$ given in \eqref{eqLevyKhintchineCPP} and under the notation introduced right after \eqref{eqFIPhim1nP},  $\varphi^{-1}(-u)= \exp \left({\Deltaup}(i u\gamma + \lambda - \FT \bar \nu(u))\right)$ for any $u \in \R$. Then, noting that $\FT \delta_{\gamma \Deltaup}(u)= \exp \left( i u \gamma \Deltaup \right)$ and writing the exponential of $- {\Deltaup} \FT \bar \nu (u)$ as an infinite series, we have for any $h>0$ fixed 
	\begin{align}\label{Identphim1infsum}
		\varphi^{-1}(-u)  = e^{\lambda \Deltaup} \FT \delta_{\gamma \Deltaup}(u) \sum_{k=0}^{\infty} (\FT \bar \nu(u) )^k\frac{(-{\Deltaup})^k}{k!}  = e^{\lambda \Deltaup} \FT \delta_{\gamma \Deltaup}(u)\sum_{k=0}^{\infty} \FTT{ \bar \nu^{\ast k}}(u) \frac{(-{\Deltaup})^k}{k!},
	\end{align} 
	where in the last equality we use that if $\mu_1$ and $\mu_2$ are finite measures on $\R$ then their convolution is a finite measure too and $\FT \mu_1 (u)\FT \mu_2 (u)= \FTT{\mu_1 \ast \mu_2}(u)$ for every $u\in \R$. These properties of convolution theory and Fourier analysis are repeatedly used in what follows and at the end of the proof we use that the same identity holds for $L^2(\R)$ functions too. To introduce the infinite sum into the Fourier transform we use that if $\mu_m$, $m=1,2, \ldots$, and $\mu$ are finite measures on $\R$ such that $\norm{\mu_m - \mu}_{TV} \to 0$ then $\FT \mu_m \to \FT \mu$ pointwise. Taking
	\begin{equation*}
		\mu_m := \sum_{k=0}^{m} \bar \nu^{\ast k} \frac{(-{\Deltaup})^k}{k!} \quad \mbox{ and } \quad  \mu := \sum_{k=0}^{\infty} \bar \nu^{\ast k} \frac{(-{\Deltaup})^k}{k!}
	\end{equation*}
	we see the condition is readily satisfied because
	\begin{equation*}
		\bnorm{ \sum_{k=m+1}^{\infty} \bar \nu^{\ast k} \frac{(-{\Deltaup})^k}{k!} }_{TV} \leq \sum_{k=m+1}^{\infty} \frac{(\lambda {\Deltaup})^k}{k!} \to 0 \qquad \mbox{as } m\to \infty
	\end{equation*}
	because it is the tail of a convergent series. Therefore
	\begin{align*}
		\varphi^{-1}(-u) \FT K_{h}(u)  = e^{\lambda \Deltaup} \FT \delta_{\gamma \Deltaup}(u) \FTT{ \sum_{k=0}^{\infty} \bar \nu^{\ast k} \frac{(-{\Deltaup})^k}{k!} }(u) \FT K_{h}(u)  = \FTT{ \FII{\varphi^{-1}(-\cdot)} \ast K_{h} }(u),
	\end{align*}
	and the first display in the lemma follows by taking the Fourier inverse on both sides, noting that $\varphi^{-1}(-\cdot) \FT K_{h}, \FII{\varphi^{-1}(-\cdot)} \ast K_{h} \in L^1(\R)$ and that the latter is continuous because $K,\FT K\in L^1(\R)$. The second display is then justified by the remark after \eqref{Identphim1infsum} and because \eqref{FIphim1FTKh=FIphim1astKh} is in $L^2(\R)$ if $K$ is by Minkowski's inequality for convolutions.
\end{proof}

\begin{lem}\label{Lemmalimsints}
	Let $\varphi$, $\FII{\varphi^{-1}(-\cdot)}$ and $P$ be given by \eqref{eqLevyKhintchineCPP} and \eqref{eqFIPhim1nP}, where $\gamma\in\R, \lambda>0$ and $\nu$ is a finite measure satisfying $\nu(\R)=\lambda$. Let $K$ be a symmetric kernel function satisfying $K \in L^1(\R) $ and $\supp \FT K \subseteq [-1,1]$, and let the bandwidth $h_n\to 0$. Let $g^{k)}_n \in L^2(\R)$, $k=1,2$, be functions satisfying  $\sup_{n} \sup_{x\in\R}|g^{k)}_n(x)| <\infty$ and such that $g^{k)}:= \lim_{n\to \infty} \, g^{k)}_n \ast K_{h_n}$ exists at every point. Then $g^{k)}$ is finite everywhere, 
	\begin{align}\label{LimCov}
		& \lim_{n\to \infty} \int_{\R} g^{1)}_n \ast K_{h_n} \ast \FII{\varphi^{-1}(-\cdot)} \, (x) \, \, g^{2)}_n \ast K_{h_n} \ast \FII{\varphi^{-1}(-\cdot)} \, (x) \, P(dx) \notag \\
		& = \int_{\R} g^{1)}  \ast \FII{\varphi^{-1}(-\cdot)} \, (x)\, \,  g^{2)} \ast  \FII{\varphi^{-1}(-\cdot)} \, (x) \, P(dx) 
	\end{align}
	exists and so does
	\begin{equation}\label{LimMean}
		\lim_{n\to \infty}  \int_{\R}  g^{k)}_n \ast K_{h_n} \ast \FII{\varphi^{-1}(-\cdot)} \, (x) \, P(dx) = \lim_{n\to \infty} g^{k)}_n \ast K_{h}\, (0)=:  g^{k)}(0), \quad k=1,2.
	\end{equation}
	Furthermore, if $\nu$ satisfies Assumption \ref{enum:th:ass1a} and, for $k=1,2$, $\tilde{g}^{k)}$ is a bounded function agreeing with $g^{k)}$ everywhere up to a zero Lebesgue-measure set disjoint from $\Z$ then
	\begin{align}\label{LimEqual}
		&\int_{\R} \tilde{g}^{1)}  \ast \FII{\varphi^{-1}(-\cdot)} \, (x) \, \, \tilde{g}^{2)} \ast  \FII{\varphi^{-1}(-\cdot)} \, (x) \, P(dx) \notag \\
		& = \int_{\R} g^{1)}  \ast \FII{\varphi^{-1}(-\cdot)} \, (x) \, \, g^{2)} \ast  \FII{\varphi^{-1}(-\cdot)} \, (x) \, P(dx).
	\end{align}
\end{lem}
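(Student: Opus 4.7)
The plan is to prove the three assertions in turn, leaning on the fact that $\mu:=\FII{\varphi^{-1}(-\cdot)}$ is a finite signed measure with $\norm{\mu}_{TV}\le e^{2\lambda\Deltaup}$ (from the series representation in \eqref{eqFIPhim1nP} and the triangle inequality), and that $K\in L^1$ with $\supp\FT K\subseteq[-1,1]$ forces $\FT K\in L^1\cap L^2$ and hence $K\in L^2$ by Plancherel. Setting $C:=\sup_n\sup_x|g^{k)}_n(x)|<\infty$, Young's inequality yields $|g^{k)}_n\ast K_{h_n}(x)|\le C\norm{K}_1$ uniformly in $n$ and $x$, which makes $g^{k)}$ finite everywhere. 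For \eqref{LimCov}, dominated convergence applied to the signed-measure integral $\int g^{k)}_n\ast K_{h_n}(x-y)\,\mu(dy)$ gives pointwise convergence to $g^{k)}\ast\mu(x)$; a second application of dominated convergence, now under $P$ with dominating constant $(C\norm{K}_1 e^{2\lambda\Deltaup})^2$, then delivers \eqref{LimCov}.

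For \eqref{LimMean}, set $f_n:=g^{k)}_n\ast K_{h_n}\in L^2$; since $\supp\FT K\subseteq[-1,1]$, the transform $\FT f_n=\FT g^{k)}_n\cdot\FT K_{h_n}$ is $L^2$ with support in $[-h_n^{-1},h_n^{-1}]$, and therefore in $L^1$ by Cauchy--Schwarz. Lemma \ref{LemmaFIphim1FTKh} then gives $f_n\ast\mu=\FII{\FT f_n\,\varphi^{-1}(-\cdot)}$. Fubini (justified because the $(u,x)$-integrand is bounded with compact $u$-support and $P$ is a probability measure) together with $\int e^{-iux}P(dx)=\varphi(-u)$ yield
\[\int(f_n\ast\mu)(x)\,P(dx)=\frac{1}{2\pi}\int\FT f_n(u)\varphi^{-1}(-u)\varphi(-u)\,du=\frac{1}{2\pi}\int\FT f_n(u)\,du=f_n(0),\]
where the last step is Fourier inversion at zero; letting $n\to\infty$ gives \eqref{LimMean}.

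The identity \eqref{LimEqual} is the delicate part and where the main obstacle lies. Under Assumption \ref{enum:th:ass1a} (with $\varepsilon=1$), expanding the series in \eqref{eqFIPhim1nP} shows that both $P$ and $\mu$ decompose as a discrete part supported on the coset $\gamma\Deltaup+\Z$ plus an absolutely continuous part: the atoms of $\nu^{\ast k}$ and $\bar\nu^{\ast k}$ come only from $\nu_d^{\ast k}$ and $\bar\nu_d^{\ast k}$, which sit on $\Z$, while any factor of $\nu_{ac}$ renders the whole convolution absolutely continuous. Let $d^{k)}:=\tilde g^{k)}-g^{k)}$, supported on a Lebesgue-null set $A\subset\R\setminus\Z$. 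I would check that $d^{k)}\ast\mu$ vanishes at every atom of $P$ and Lebesgue-a.e.\ off a null set: at $x=\gamma\Deltaup+j$, $j\in\Z$, the $\mu_{ac}$-contribution is zero because $d^{k)}$ vanishes Lebesgue-a.e., whereas the $\mu_d$-contribution equals $\sum_{l\in\Z}\mu_d(\{\gamma\Deltaup+l\})\,d^{k)}(j-l)=0$ since every $j-l\in\Z$ sits outside $A$; the same two-part reasoning shows $(d^{k)}\ast\mu)(x)=0$ for $x\notin\gamma\Deltaup+\Z+A$, a Lebesgue-null exception. Consequently $\tilde g^{k)}\ast\mu=g^{k)}\ast\mu$ holds $P$-almost everywhere, and the bilinear identity $h_1h_2-h_1'h_2'=(h_1-h_1')h_2+h_1'(h_2-h_2')$ (with both factors bounded in $x$) yields \eqref{LimEqual}. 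The substantive difficulty is precisely this atomic/AC bookkeeping: Assumption \ref{enum:th:ass1a} is exactly what confines every atom to the single coset $\gamma\Deltaup+\Z$ disjoint from $A$.
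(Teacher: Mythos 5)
Your proposal is correct and follows essentially the same route as the paper: uniform boundedness via $\norm{K_{h_n}}_1=\norm{K}_1$ and the finiteness of $\FII{\varphi^{-1}(-\cdot)}$ followed by dominated convergence for \eqref{LimCov}, the Plancherel/Fourier-inversion computation reducing \eqref{LimMean} to $g^{k)}_n\ast K_{h_n}(0)$, and the atomic-versus-absolutely-continuous bookkeeping for \eqref{LimEqual}. The only difference is organizational: for the last part the paper passes to the triple product measure on $\R^3$ and cites Sato's Lemma 27.1, whereas you decompose $P$ and $\FII{\varphi^{-1}(-\cdot)}$ individually and track where $\tilde g^{k)}-g^{k)}$ can fail to vanish, but both arguments rest on the identical observation that all atomic evaluations land in $\Z$, where the two functions agree.
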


\begin{proof}
	Let $k=1,2$ be fixed throughout. Note that $\norm{K_{h}}_{TV}=\norm{K}_{TV}<\infty$ for all $h>0$, $\sup_{n} \sup_{x\in\R}|g^{k)}_n(x)| <\infty$ by assumption, and $\norm{\FII{\varphi^{-1}(-\cdot)}}_{TV}<\infty$. Then, in view of Minkowski's inequality for integrals, $\sup_{n} \sup_{x\in \R} |g^{k)}_n\ast K_{h_n}(x)| <\infty$ and therefore $g^{k)}$ is finite everywhere and $\sup_{n} \sup_{x\in \R} |g^{k)}_n\ast K_{h_n}\ast \FII{\varphi^{-1}(-\cdot)} (x)|< \infty$. The existence of all the displays therefore follows and so does equality \eqref{LimCov} using dominated convergence.
	
	Note that $\norm{\FT K_h}_2\lesssim h^{-1/2} \norm{K}_1<\infty$, which in particular implies that $K\in L^2(\R)$ used below. Additionally, $\FT g \in L^2(\R)$ and $\varphi^{-1}\in L^{\infty}(\R)$ so, by H\"{o}lder's inequality, $\FT g \, \varphi^{-1}(-\cdot) \FT K_h \in L^1(\R)$ .Then, in view of \eqref{eqFIFTg...}, the symmetry of $K$ and the finiteness of $P$, the left hand side in \eqref{LimMean} equals the limit as $n\to \infty$ of
	\begin{align*}
		\frac{1}{2\pi} \int_{\R} \FT g^{k)}_n(-u) \, \FT K_{h_n}(u) \, \varphi^{-1}(u) \, \FT P (u) \, du= \int_{\R} g^{k)}_n(x) \, K_{h_n}(x) \, dx = g^{k)}_n \ast K_{h_n}\, (0),
	\end{align*}
	where the first equality follows by the symmetry of $K$ and Plancherel's formula due to $g, K \in L^2(\R)$. The limit of this quantity is $g^{k)}(0)$ by definition and the first part of the lemma follows.
	
	To show the last claim we note that because $\tilde{g}^{k)}$ is bounded we can apply Fubini's theorem to write the left hand side of \eqref{LimEqual} as
	\begin{equation*}
		\int_{\R^3} \tilde{g}^{1)}(x-y_1) \, \tilde{g}^{2)}(x-y_2) \, P(dx) \, \FII{ \varphi^{-1}(-\cdot)} (dy_1) \, \FII{ \varphi^{-1}(-\cdot)} (dy_2).
	\end{equation*}
	In view of Lemma 27.1 in \cite{S99} the product of the three measures gives a finite measure on $\R^3$ comprising a (possibly null) atomic component and a (possibly null) absolutely continuous component. Recall that the former may have atoms only at $ \Z^3$. In any of these point measures the functions $\tilde{g}^{1)}$ and $\tilde{g}^{2)}$ are evaluated at a value in $\Z$ and therefore there they coincide with $g^{1)}$ and $g^{2)}$ by assumption. In the rest of the values they agree up to a set of zero Lebesgue-measure but in these they are integrated with respect to an absolutely continuous measure and thus the conclusion follows.
\end{proof}

The following is the central result when dealing with the stochastic terms arising from proving joint convergence of one-dimensional parameters.

\begin{thm}\label{ThmGenericCLT}
	Let $\varphi$ and $\varphi_n$ be as in Section \ref{SecDefNot}, i.e. the characteristic function, and its empirical counterpart, of the ${\Deltaup}$-increments of a compound Poisson process with law $P$, empirical measure $P_n$, drift $\gamma \in \R$, intensity $\lambda>0$ and finite jump measure $\nu$ satisfying $\nu(\R)=\lambda$ and Assumption \ref{enum:th:ass2}. Let $K$ be a symmetric kernel function satisfying $K \in L^1(\R) $ and $\supp \FT K \subseteq [-1,1]$, and let the bandwidth $h_n\to 0$ be such that $\log(h_n^{-1})/n^{1/(1+2\delta)} \to 0$ for some $\delta>0$. Finally, let $g_n\in L^2  (\R)$ be a function satisfying
	\begin{equation}\label{assumpggenCLT}
		\int_{-h_n^{-1}}^{h_n^{-1}} \big| \FT g_n (u)\big| \, du =o\big(n^{1/2}(\log h_n^{-1})^{-(1+2\delta)}\big), \quad \sup_{n} \sup_{x\in\R}|g_n(x)| <\infty \quad  \mbox{and} \quad g:= \lim_{n\to \infty} \, g_n \ast K_{h_n} 
	\end{equation}
	exists at every point. Then we have that as $n\to \infty$
	\begin{align}
		\!\! \sqrt{n} \, \frac{1}{{\Deltaup}} \! \int_{\R} \! g_n(x) \,  \FII{\Log \frac{\varphi_n}{\varphi} \FT K_h}\!(x) dx  \, & =^{\Pr} \!\! \sqrt{n} \, \frac{1}{{\Deltaup}} \! \int_{\R} g_n \ast K_h \ast \FII{\varphi^{-1}(-\cdot)}\! (x)(P_n \! - \! P)(dx) \notag \\
		& \qquad + \sqrt{n} \, \frac{1}{2 \pi {\Deltaup}} \int_{\R} \FT g_n(-u) R_n(u) \FT K_h(u) \, du \label{eqThmGenCLT} \\
		&  \to^d \, N(0,\sigma_g^2), \label{eqThmGenCLT2}
	\end{align}
	where $R_n$ is as in Theorem \ref{ThmLinearisation} and $\sigma_g^2$ is finite and satisfies
	\begin{align*} 
		{\Deltaup}^2\sigma_g^2 = \! \int_{\R} \! \left( g \ast \FII{ \varphi^{-1}(-\cdot)} (x) - g (0) \right)^2 P(dx) = \! \int_{\R}  \left( g \ast \FII{ \varphi^{-1}(-\cdot)} \! (x) \right)^2 P(dx) - g (0)^2.
	\end{align*}
\end{thm}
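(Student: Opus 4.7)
The plan is to prove identity \eqref{eqThmGenCLT} first, and then derive \eqref{eqThmGenCLT2} by showing that the linear term on the right hand side is asymptotically normal with variance $\sigma_g^2$, while the remainder term is $o_{\Pr}(1)$. Assumption \ref{enum:th:ass2} together with \eqref{eq:boundvarphi} and the hypothesis $\log(h_n^{-1})/n^{1/(1+2\delta)} \to 0$ place us squarely within the scope of Theorem \ref{ThmLinearisation} applied to $\xi_k = Z_k$; in the sets of $\Pr$-probability tending to one provided there, we may substitute \eqref{eqStochIdent} inside $\FII{\Log(\varphi_n/\varphi)\FT K_h}$ and split the integral of $g_n$ against it into a linear piece driven by $(\varphi_n-\varphi)\varphi^{-1}\FT K_h$ and a remainder piece driven by $R_n \FT K_h$.

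For the linear piece, a standard Parseval-type identity rewrites
\[
\frac{1}{\Deltaup}\int_{\R} g_n(x)\, \FII{(\varphi_n-\varphi)\varphi^{-1}\FT K_h}(x)\, dx
= \frac{1}{2\pi\Deltaup}\int_{\R} \FT g_n(-u)\,(\varphi_n(u)-\varphi(u))\,\varphi^{-1}(u)\,\FT K_h(u)\,du.
\]
Since $\varphi_n-\varphi = \FT(P_n-P)$, Lemma \ref{LemmaFIphim1FTKh} (specifically \eqref{eqFIFTg...}) identifies the inverse Fourier transform of $\FT g_n(-\cdot)\,\varphi^{-1}\,\FT K_h$ with $g_n \ast K_h \ast \FII{\varphi^{-1}(-\cdot)}$, and another Parseval step expresses the display above as $\Deltaup^{-1}\int \psi_n\, d(P_n-P)$ with $\psi_n := \Deltaup^{-1}\, g_n \ast K_h \ast \FII{\varphi^{-1}(-\cdot)}$. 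For the remainder piece, the same Parseval argument yields exactly the integral in the second term of \eqref{eqThmGenCLT}, establishing that equality.

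To obtain asymptotic normality of $\sqrt n \int \psi_n\, d(P_n-P) = n^{-1/2}\sum_{k=1}^n (\psi_n(Z_k)-\E\psi_n(Z_1))$, I would invoke the Lindeberg CLT for triangular arrays. The functions $\psi_n$ are uniformly bounded in $n$ and $x$: by the third condition in \eqref{assumpggenCLT} together with $\|K_{h_n}\|_1 = \|K\|_1$ and $\|\FII{\varphi^{-1}(-\cdot)}\|_{TV}<\infty$ from \eqref{eqFIPhim1nP}. This uniform bound makes Lindeberg's condition trivial for any fixed $\epsilon>0$ and large $n$. The variance $\mathrm{Var}(\psi_n(Z_1)) = \int \psi_n^2\, dP - (\int \psi_n\, dP)^2$ is then handled by Lemma \ref{Lemmalimsints}: applying \eqref{LimCov} with $g_n^{(1)}=g_n^{(2)}=g_n$ identifies the limit of the first term as $\Deltaup^{-2}\int (g\ast\FII{\varphi^{-1}(-\cdot)})^2\, dP$, while \eqref{LimMean} gives $\int \psi_n\, dP \to g(0)/\Deltaup$. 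Combining, $\mathrm{Var}(\psi_n(Z_1))\to \sigma_g^2$ in both equivalent forms stated in the theorem, and the finiteness of $\sigma_g^2$ follows from \eqref{eqFinVar}.

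The main obstacle and the only step where the hypotheses on $g_n$ and $h_n$ are binding is controlling the remainder term. Since $\supp \FT K_h \subseteq [-h_n^{-1}, h_n^{-1}]$ and $\|\FT K_h\|_\infty \le \|K\|_1$, we have
\[
\bigg|\sqrt n \,\frac{1}{2\pi\Deltaup}\int_{\R} \FT g_n(-u)\, R_n(u)\, \FT K_h(u)\, du\bigg|
\lesssim \sqrt n \,\sup_{|u|\le h_n^{-1}} |R_n(u)|\,\int_{-h_n^{-1}}^{h_n^{-1}} |\FT g_n(u)|\, du.
\]
The supremum is $O_{\Pr}(n^{-1}(\log h_n^{-1})^{1+2\delta})$ by Theorem \ref{ThmLinearisation} (taking $\bar\delta=\delta$), and the integral is $o(n^{1/2}(\log h_n^{-1})^{-(1+2\delta)})$ by the first condition in \eqref{assumpggenCLT}; multiplying by $\sqrt n$ and the above bound gives $o_{\Pr}(1)$. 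Slutsky's lemma then combines the normal limit of the linear part with this vanishing remainder to produce \eqref{eqThmGenCLT2}.
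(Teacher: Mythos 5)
Your proposal is correct and follows essentially the same route as the paper: linearise $\Log(\varphi_n/\varphi)$ via Theorem \ref{ThmLinearisation}, pass to the Fourier side by Plancherel, identify the linear part as $\int g_n \ast K_{h_n} \ast \FII{\varphi^{-1}(-\cdot)}\, d(P_n-P)$ via Lemma \ref{LemmaFIphim1FTKh}, kill the remainder with the bound on $\sup_{|u|\le h_n^{-1}}|R_n(u)|$ and the first condition in \eqref{assumpggenCLT}, and apply the Lindeberg--Feller CLT with the variance limit supplied by Lemma \ref{Lemmalimsints}. The only slip is cosmetic: the uniform boundedness of $\psi_n$ comes from the \emph{second} condition in \eqref{assumpggenCLT} (namely $\sup_n\sup_x|g_n(x)|<\infty$) together with Minkowski's inequality for convolutions, not from the third.
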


\begin{proof}
	We first note that the conditions of Theorem \ref{ThmLinearisation} are readily satisfied when $\xi_k=Z_k$ because $\inf_{u \in \R} |\varphi(u)| \geq \exp(-2 \lambda {\Deltaup})>0$, $\supp(\FT K_h) \subseteq [-h^{-1}, h^{-1}]$ and we have assumed the same decay in $h_n$ as well as Assumption \ref{enum:th:ass2}. Then, using the compact support of $\FT K_h$, the quantity in the Fourier inverse transform of
	\begin{equation}\label{eqStochTerm}
		\frac{1}{{\Deltaup}} \int_{\R} g_n(x) \, \FII{\Log \frac{\varphi_n}{\varphi} \FT K_h}(x) dx
	\end{equation}
	is in $L^2(\R)$ in sets of $\Pr$-probability approaching $1$ as $n\to \infty$ and, in these, Plancherel's formula can be used, noting that $g_n \in L^2(\R)$, to write the last display as
	\begin{align}\label{eqStochTermDec}
		\frac{1}{2 \pi{\Deltaup}} \int_{\R} \FT g_n(-u)\Log \frac{\varphi_n(u)}{\varphi(u)} \FT K_h(u) du =^{\Pr}  & \frac{1}{2 \pi {\Deltaup}} \int_{\R} \FT g_n(-u)\left( \varphi_n(u) - \varphi(u)\right) \varphi^{-1}(u) \FT K_h(u) du \notag \\
		& + \frac{1}{2 \pi {\Deltaup}} \int_{\R} \FT g_n(-u) R_n(u) \FT K_h(u) du.
	\end{align}
	Due to $|\FT K_h| \leq \norm{K}_1 \1_{[-h^{-1}, h^{-1}]}$, the second summand is bounded by ${\Deltaup}^{-1}  \norm{K}_1 $ times
	\begin{align*}
		\! \sup_{|u|\leq h_n^{-1}}  \! |R_n(u)| \!\int_{-h_n^{-1}}^{h_n^{-1}} \! |\FT g_n(u)| \, du = O_{\Pr} \Big( n^{-1} (\log h_n^{-1})^{1+2\delta} \! \!  \int_{-h_n^{-1}}^{h_n^{-1}} \! |\FT g_n(u)| \, du \Big) = o_{\Pr} \big( n^{-1/2} \big),
	\end{align*}
	where the equalities are justified using Theorem \ref{ThmLinearisation} and the first condition in \eqref{assumpggenCLT}. Hence this summand is negligible in the asymptotic distribution of the left hand side of \eqref{eqThmGenCLT}. The first summand in \eqref{eqStochTermDec} is ${\Deltaup}^{-1}$ times
	\begin{align*}
		\frac{1}{2\pi }\int_{\R} \FT g_n(-u) \varphi^{-1}(u) \FT K_h(u) \FTT{P_n - P} (du)  = \int_{\R} \FII{\FT g_n \, \varphi^{-1}(-\cdot) \FT K_h} (x)\left(P_n - P \right)(dx),
	\end{align*}
	where the equality follows by the symmetry of $K$, because $P_n-P$ is a finite measure and due to $\FT g_n \, \varphi^{-1}(-\cdot) \FT K_h \in L^1(\R)$ by H\"{o}lder's inequality noting that $\FT g_n \in L^2(\R)$, $\varphi^{-1}\in L^{\infty}(\R)$ and $\norm{\FT K_h}_2\leq h^{-1/2} \norm{K}_1<\infty$. Note that \eqref{eqThmGenCLT} is then justified by Lemma \ref{LemmaFIphim1FTKh}. Therefore we point out that the stochastic quantity \eqref{eqStochTerm} is centred after linearisation and the Lindeberg-Feller central limit theorem applies to 
	\begin{equation}\label{eqCLTTerm}
		\frac{1}{{\Deltaup}} \int_{\R} g_n \ast K_h \ast \FII{\varphi^{-1}(-\cdot)} (x)\left(P_n - P \right)(dx)
	\end{equation}
	if 
	\begin{equation}\label{CondCLT}
		\sup_{x\in \R} \left| g_n \ast \FII{\varphi^{-1}(-\cdot) \FT K_{h_n}} (x) \right| =o(n^{1/2})
	\end{equation}
	and if
	\begin{equation}\label{LimVar}
		\lim_{n\to \infty} \int_{\R} \left( g_n \ast K_{h_n} \ast \FII{\varphi^{-1}(-\cdot)} (x) - g_n \ast K_{h_n}(0) \right)^2 P(dx)
	\end{equation}
	exists (see, for example, Proposition 2.27 in \cite{AvdV98}). Note that $\norm{K_{h}}_{TV}=\norm{K}_{TV}<\infty$ for all $h>0$ and, by assumption, $\sup_{n} \sup_{x\in\R}|g_n(x)| <\infty$ and $\norm{\FII{\varphi^{-1}(-\cdot)}}_{TV}<\infty$. Then, in view of Minkowski's inequality for integrals, $\sup_{n} \sup_{x\in \R} |g_n\ast K_{h_n}(x)| <\infty$ and therefore $g$ is finite everywhere and $\sup_{n} \sup_{x\in \R} |g_n\ast K_{h_n}\ast \FII{\varphi^{-1}(-\cdot)} (x)|< \infty$. Consequently, \eqref{CondCLT} is satisfied and \eqref{LimVar} follows by dominated convergence because of the third assumption in \eqref{assumpggenCLT}. This also justifies that \eqref{eqThmGenCLT2} holds for the first expression for $\sigma_g^2$. The second expression for this quantity follows by expanding the square in \eqref{LimVar} and using \eqref{LimMean}.
\end{proof}

The following auxiliary result is immediate to prove but it will be subsequently used several times. Therefore, and for the sake of clarity, we formulate it in the form of a lemma.

\begin{lem}\label{LemmaRemainderTerm}
	Let $a,b,U,V\in \R$ such that $U>1$, $V>0$ and $|a|,|b|< V$. Then, for $k=0,1$,
	\begin{equation*}
		\int_{-U}^U \big|\FTT{ \, (\cdot)^{\,k} \1_{[a,b]}} \big|(u) \, du \lesssim  U^{2k-1}V^{k+1} + (1-k)\log U.
	\end{equation*} 
\end{lem}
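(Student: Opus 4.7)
I would handle the two cases $k=0$ and $k=1$ separately, using quite different arguments.

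For $k=1$, the crude pointwise bound $|\FTT{(\cdot)\1_{[a,b]}}(u)| \leq \int_a^b |x|\,dx \leq V(b-a) \leq 2V^2$, which holds uniformly in $u\in\R$ because $|a|,|b|<V$, already suffices: integrating this constant bound over $[-U,U]$ immediately gives $\int_{-U}^U |\FTT{(\cdot)\1_{[a,b]}}(u)|\,du \leq 4UV^2 \lesssim UV^2$, matching $U^{2\cdot 1-1}V^{1+1}$.

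For $k=0$, the analogous crude argument $|\FTT{\1_{[a,b]}}(u)| \leq b-a \leq 2V$ would yield only $4UV$, which is one power of $U$ too large. I would therefore combine it with the explicit formula $\FTT{\1_{[a,b]}}(u) = (e^{iub}-e^{iua})/(iu)$ (valid for $u\neq 0$), which gives the oscillatory bound $|\FTT{\1_{[a,b]}}(u)| \leq 2/|u|$. The key step is to split the integration domain at $|u|=1/U$, rather than at the more standard $|u|=1$: on $\{|u|\leq 1/U\}$ the crude bound contributes at most $2V\cdot(2/U)=4V/U$, while on $\{1/U<|u|\leq U\}$ the oscillatory bound contributes at most $4\int_{1/U}^{U} du/u = 8\log U$. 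Summing the two pieces yields $4V/U + 8\log U \lesssim V/U + \log U$, which is exactly $U^{2\cdot 0-1}V^{0+1}+(1-0)\log U$.

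The only substantive decision in the argument is the choice of split point $|u|=1/U$; this is what produces the $V/U$ term rather than a $V$ term, and it reflects the fact that the low-frequency contribution is being confined to a neighborhood of $0$ of the precisely matching size $2/U$. With this choice in place, the rest is routine and I foresee no real obstacle.
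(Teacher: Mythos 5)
Your proof is correct and follows essentially the same route as the paper's: the uniform $L^1$ bound $\lesssim V^{k+1}$ settles $k=1$ outright, and for $k=0$ the paper likewise splits the domain at $|u|=U^{-1}$, using the crude bound near the origin and the oscillatory bound $|(e^{iub}-e^{iua})/(iu)|\lesssim |u|^{-1}$ on the remainder. No substantive differences.
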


\begin{proof}
	Note that for all $u\in \R$
	\begin{equation*}
		\big|\FTT{ \, (\cdot)^{\,k} \1_{[a,b]}} \big| (u) \leq \norm{\, (\cdot)^{\,k} \1_{[a,b]}}_1 \leq V^{k} \big(|a| + |b|\big) \lesssim V^{k+1}.
	\end{equation*}
	Therefore, the conclusion for the case $k=1$ follows immediately. 
	
	Due to $U>1$, when $k=0$ we split the integral in the statement as the sum of that over $[-U^{-1}, U^{-1}]$ and $[-U,U]\setminus [-U^{-1}, U^{-1}]$. By the last display the former is bounded above by $U^{-1}V$, up to constants independent of $U$ and $V$. To bound the latter we note that
	\begin{align*}
		\int_{[-U,U]\setminus [-U^{-1}, U^{-1}]} \left| \frac{e^{iub}-e^{iua}}{iu}  \right| \, du & \lesssim \int_{U^{-1}}^U u^{-1} \, du \lesssim \log(U).
	\end{align*}
\end{proof}

The next lemma guarantees the functions $g_n$ we subsequently input into Theorem \ref{ThmGenericCLT} satisfy the necessary conditions therein. We denote pointwise equality between functions by $\equiv$.

\begin{lem}\label{Lemmaglambdagnu}
	Suppose $K$ satisfies \eqref{conditionsK} and $h_n, \varepsilon_n\to 0$, $H_n\to \infty$ are such that $h_n \sim\exp(-\vartheta_h)$ with $\vartheta_h<1/4$, $h_n \varepsilon_n^{-1}=o(1)$ and $h_n H_n=o(1)$, hence satisfying the assumptions of Theorem \ref{ThmGenericCLT} on $K$ and $h_n$. Then, for all $c\neq 0, j\in \Z\setminus\{0\}$ and $t\in \R$ the functions $f^{(\lambda)}_n$, $f^{(\gamma)}_n$, $f^{(q_j)}_n$ and $f^{(N)}_{t,n}$ defined in Section \ref{SecSettingEstimators} are bounded, belong to $L^2(\R)$ and satisfy \eqref{assumpggenCLT} for $g_n$ equal to each of them with the constant hidden in the notation small-$o$ of the first condition not depending on $t$ or $j$ when $g_n=f^{(N)}_{t,n}$ or $g_n=f^{(q_j)}_n$. Recalling the definitions of $f^{(\lambda)}$, $f^{(q_j)}$ and $f^{(N)}_{t}$ in Section \ref{SecCLT} and defining $f^{(\gamma)}:=0$, 
	we have that
	\begin{equation} \label{Limglambdagmu}
		f^{(\lambda)}:= \1_{\R\setminus \{0\}} \equiv \lim_{n\to \infty} \, f^{(\lambda)}_n \ast K_{h_n} , \quad  f^{(\gamma)}:= 0 \equiv \lim_{n\to \infty} \, f^{(\gamma)}_n \ast K_{h_n},
	\end{equation}
	
	\begin{equation}\label{Limgj}
		\quad f^{(q_j)}: =\1_{ \{   j\} } \equiv \lim_{n\to \infty} \, f^{(q_j)}_n \ast K_{h_n}, \, \,  j\in \Z,
	\end{equation}
	and
	\begin{equation}\label{Limftn}
		l_t:= \lim_{n\to \infty} \, f^{(N)}_{t,n} \ast K_{h_n} \equiv f^{(N)}_{t} - \frac{1}{2} \1_{\{t\}} \1_{\R\setminus   \Z  }(t).
	\end{equation}
	Furthermore, for any finite set $ \mathcal{T}\subset \R$, any $C_{(\lambda)}, C_{(\gamma)}, C_{(j)}, C_t  \in \R$, $j\in \Z\setminus\{0\}$ and $t\in  \mathcal{T}$, and any $\widetilde{H}_n\to \infty$ define the linear combination 
	\begin{equation}\label{defg}
		f_{t,n}:=C_{(\lambda)} f^{(\lambda)}_n + C_{(\gamma)} f^{(\gamma)}_n + \sum_{\substack{ |j| \leq \widetilde H_n \\ j \neq 0}} C_{(j)} f^{(q_j)}_n + \sum_{t\in  \mathcal{T}} C_t f^{(N)}_{t,n}.
	\end{equation}
	Then $f_{t,n}$ enjoys the above-mentioned properties the individual functions in it satisfy if either 
	\begin{enumerate}
		\item finitely many coefficients $C_{(j)}$ are not zero;
		\item or $\sup_{j\in \Z\setminus\{0\}} |C_{(j)}| <\infty$, $\widetilde{H}_n \sim n^{\vartheta_{\widetilde{H}}}$ with $\vartheta_{\widetilde{H}} <1/2$ and $\vartheta_{h} < (1-2\vartheta_{\widetilde{H}})/4$.
	\end{enumerate}
	In both cases
	\begin{equation*}
		\lim_{n\to \infty} \, f_{t,n} \ast K_{h_n} \equiv C_{(\lambda)} f^{(\lambda)} + C_{(\gamma)} f^{(\gamma)} + \sum_{j\in  \Z\setminus\{0\}} C_{(j)} f^{(q_j)} + \sum_{t\in  \mathcal{T}} C_t l_t.
	\end{equation*}
\end{lem}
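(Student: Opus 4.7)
The plan is to verify the three conditions of \eqref{assumpggenCLT} separately for each building block $f^{(\lambda)}_n, f^{(\gamma)}_n, f^{(q_j)}_n, f^{(N)}_{t,n}$, to identify the corresponding pointwise limits of $(\cdot)\ast K_{h_n}$, and finally to combine everything linearly to obtain the claim for $f_{t,n}$ in both cases (a) and (b). Uniform boundedness and $L^2(\R)$-membership of each building block are immediate because each is either a bounded indicator or a bounded function times one, supported in $[-H_n,H_n]$. The main technical input is the Fourier $L^1$-bound on $[-h_n^{-1},h_n^{-1}]$, for which I invoke Lemma \ref{LemmaRemainderTerm} with $U=h_n^{-1}$ and an appropriate choice of $V$.

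For $f^{(\lambda)}_n=\1_{[-H_n,-\varepsilon_n]}+\1_{[\varepsilon_n,H_n]}$ the lemma with $k=0$ and $V\asymp H_n$ yields $O(h_n H_n+\log h_n^{-1})=O(\log h_n^{-1})$ since $h_n H_n=o(1)$. For $f^{(\gamma)}_n=c^{-1}x\1_{|x|<h_n}$ the case $k=1$ with $V\asymp h_n$ gives the sharper $O(h_n)$. For $f^{(q_j)}_n$ and $f^{(N)}_{t,n}$ the key observation is that $|\FT\1_{[a,b]}|$ depends only on $b-a$, so a shift reduces them to single-interval indicators of length $2\varepsilon_n$ and at most $2H_n$, respectively (the latter is a union of at most two such intervals), producing $O(\log h_n^{-1})$ uniformly in $j$ and $t$. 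Since $\log h_n^{-1}\sim n^{\vartheta_h}$ with $\vartheta_h<1/4$, picking $\delta>0$ small so that $(2+2\delta)\vartheta_h<1/2$ yields $\log h_n^{-1}=o(n^{1/2}(\log h_n^{-1})^{-(1+2\delta)})$, which is the required rate.

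The pointwise limits then follow from the approximate-identity property of $K_{h_n}$ combined with the polynomial decay in \eqref{conditionsK}. At continuity points of the candidate limit a dominated-convergence argument gives the limit directly, so the only genuine subtleties are at $x=0$ and at $x=t$. At $x=0$, because $h_n/\varepsilon_n=o(1)$, after substitution $y=uh_n$ the convolution integral is asymptotically supported in $|u|>\varepsilon_n/h_n\to\infty$, and the tail bound $|K(u)|\lesssim(1+|u|)^{-\eta}$ with $\eta>2$ shows the contribution vanishes; this gives the zero at the origin in \eqref{Limglambdagmu} and \eqref{Limftn}. At $x=t$ with $t\notin\Z$, the first alternative in \eqref{defftn} is eventually active so that $f^{(N)}_{t,n}$ genuinely jumps at $t$, and the symmetry of $K$ produces the averaged value $1/2$, which is precisely the correction $-\tfrac12\1_{\{t\}}\1_{\R\setminus\Z}(t)$ in \eqref{Limftn}; when instead $t\in\Z$ the second or third alternative in \eqref{defftn} shifts the jump to $t\pm\varepsilon_n$, away from $t$, so no averaging is seen at $x=t$ and $l_t(t)=f^{(N)}_t(t)$. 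The analogous calculation for $f^{(q_j)}_n$ yields $\1_{\{j\}}$, and for $f^{(\gamma)}_n$ the trivial bound $|f^{(\gamma)}_n\ast K_{h_n}(x)|\le c^{-1}h_n\norm{K}_1$ already gives the zero limit.

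For the linear combination $f_{t,n}$, case (a) follows by linearity of the three conditions. The substance is case (b), which is the main obstacle: the sum ranges over $O(\widetilde H_n)=O(n^{\vartheta_{\widetilde H}})$ values of $j$. Uniform boundedness uses that, for $n$ large, the intervals $(j-\varepsilon_n,j+\varepsilon_n)$ are pairwise disjoint, so $|\sum_j C_{(j)}f^{(q_j)}_n(x)|\le\sup_j|C_{(j)}|$ pointwise; $L^2$-membership is then clear because the combined support is contained in $[-\widetilde H_n-1,\widetilde H_n+1]$. The Fourier $L^1$-bound becomes $\widetilde H_n\cdot O(\log h_n^{-1})=O(n^{\vartheta_{\widetilde H}}\log h_n^{-1})$, and the condition $\vartheta_{\widetilde H}+(2+2\delta)\vartheta_h<1/2$ needed for the rate in \eqref{assumpggenCLT} is met for $\delta$ sufficiently small precisely because $\vartheta_h<(1-2\vartheta_{\widetilde H})/4$. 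Finally, for the pointwise limit of the growing sum I use the off-diagonal bound $|f^{(q_j)}_n\ast K_{h_n}(x)|\lesssim h_n^{\eta-1}\varepsilon_n|x-j|^{-\eta}$ (valid for $|x-j|$ bounded below), together with $\sum_j|x-j|^{-\eta}<\infty$ (since $\eta>2$), to apply dominated convergence inside the $j$-sum; this localises the limit to the at most one $j$ equal to $x$ and produces $\sum_j C_{(j)}\1_{\{j\}}(x)$, completing the identification of $\lim f_{t,n}\ast K_{h_n}$.
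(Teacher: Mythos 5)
Your proposal is correct and follows essentially the same route as the paper: Lemma \ref{LemmaRemainderTerm} for the Fourier $L^1$-bounds (with the same choices of $k$ and $V$), the approximate-identity computation with the polynomial tail of $K$ and the condition $h_n/\varepsilon_n\to 0$ for the pointwise limits (including the symmetry argument yielding the $1/2$ at $x=t$), and the bound $\widetilde H_n\cdot O(\log h_n^{-1})$ together with $\vartheta_h<(1-2\vartheta_{\widetilde H})/4$ for case (b). The only small deviation is your treatment of the pointwise limit of the growing $j$-sum via the summable off-diagonal decay $\varepsilon_n h_n^{\eta-1}|x-j|^{-\eta}$, where the paper instead uses the cruder count $\widetilde H_n\cdot O(\varepsilon_n h_n^{\eta-1})=o(1)$; both work.
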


\begin{proof}
	The conditions of Theorem \ref{ThmGenericCLT} on $K$ are trivially satisfied by those assumed here, and those on $h_n$ hold because
	\begin{equation*}
		\log(h_n^{-1})/n^{1/(1+2\delta)}\sim n^{\vartheta_h-1/(1+2\delta)}=o\big( n^{(-3+2\delta)/(4+8\delta)}\big)=o(1)
	\end{equation*}
	for any $\delta \in (0,3/2)$ due to $\vartheta_h<1/4$.
	
	To check \eqref{assumpggenCLT} for each of the individual functions note that for any $n$ fixed, any $j\in \Z\setminus\{0\}$ and $t\in \R$, the functions $f^{(\lambda)}_n$, $f^{(\gamma)}_n$, $f^{(q_j)}_n$ and $f^{(N)}_{t,n}$ have bounded supports and their absolute values are uniformly bounded by $\max\{1,|c|^{-1}\}<\infty$. Therefore each of them is bounded, square integrable and satisfies the second condition in \eqref{assumpggenCLT}. To check the first condition in \eqref{assumpggenCLT} we first note that $f^{(\lambda)}_n=\1_{[-H_n, H_n]} - \1_{(-\varepsilon_n, \varepsilon_n)}$. Then, by Lemma \ref{LemmaRemainderTerm} with $k=0$,
	\begin{equation*}
		\int_{-h_n^{-1}}^{h_n^{-1}} \big| \FT f^{(\lambda)}_n  (u) \big| \, du \lesssim h_n \, H_n + h_n \, \varepsilon_n + \log(h_n^{-1}) =O(\log h_n^{-1}),
	\end{equation*}
	where in the last equality we have used the asymptotics of $h_n$, $\varepsilon_n$ and $H_n$. Taking $k=1$ in Lemma \ref{LemmaRemainderTerm}, the same arguments guarantee that 
	\begin{equation*}
		\int_{-h_n^{-1}}^{h_n^{-1}} \big| \FT f^{(\gamma)}_n  (u) \big| \, du  \lesssim 1,
	\end{equation*}
	and taking $k=0$ we have that for any $j\in \Z \setminus\{0\}$ 
	\begin{equation*}
		\int_{-h_n^{-1}}^{h_n^{-1}} \big| \FT f^{(q_j)}_n  (u) \big| \, du = \int_{-h_n^{-1}}^{h_n^{-1}} \big| e^{iu   j} \FTT{\1_{[-\varepsilon_n, \varepsilon_n]}} (u)\big| \, du  \lesssim h_n \, \varepsilon_n + \log(h_n^{-1}) =O(\log h_n^{-1}).
	\end{equation*}
	To justify that $f^{(N)}_{t,n}$ enjoys the same asymptotic property, note that it can be written as $\1_{[a_1,b_1]} +\1_{[a_2,b_2]}$ for some $a_1, a_2, b_1, b_2\in \R$ satisfying that their absolute values are bounded above by $H_n+\varepsilon_n$. Then, using Lemma \ref{LemmaRemainderTerm} with $k=0$ and the asymptotics of $h_n$, $\varepsilon_n$ and $H_n$,
	\begin{equation*}
		\int_{-h_n^{-1}}^{h_n^{-1}} \big| \FT f^{(N)}_{t,n}  (u) \big| \, du \lesssim h_n \, (H_n+\varepsilon_n) + \log(h_n^{-1}) =O(\log h_n^{-1}) \quad \mbox{ for any $t\in \R$.}
	\end{equation*}
	Then, these functions satisfy the first condition in \eqref{assumpggenCLT} because $h_n \sim \exp(-n^{\vartheta_h})$, $\vartheta_h<1/4$, so
	\begin{equation*}
		(\log h_n^{-1})^{2(1+\delta)}n^{-1/2} \sim n^{2(1+\delta)(\vartheta_h-\frac{1}{4(1+\delta)})}=o(1)
	\end{equation*}
	for some $\delta \!\in\!(0, 1/(4\vartheta_h)-1)$, for which we also have that $\log h_n^{-1}/n^{1/(1+2\delta)} \to 0$ since it is $o(n^{-\vartheta_h(1+2\theta_h)/(1-2\theta_h)})$. The lack of dependence on $j$ or $t$ of the constants hidden in the notation $\lesssim$ is clear by the arguments we have employed. 
	
	We now compute the limits in \eqref{Limglambdagmu} in reverse order. Notice that for any $x\in \R$
	\begin{equation*}
		h_n^{-1} \cdot \mathds{1}_{(-h_n,h_n)} \ast K_{h_n} \, (x) = \frac{x}{h_n}\int_{x/h_n-1}^{x/h_n+1}  K(y)\, dy - \int_{x/h_n-1}^{x/h_n+1} y\,  K(y)\, dy.
	\end{equation*}
	When $x=0$ this equals $- \int_{[-1,1]} \cdot \, K$, which is zero by the symmetry of $K$, and otherwise it is of order $O(h_n^{\eta -1})$ by the decay of $|K|$. Hence the second limit in \eqref{Limglambdagmu} is justified for any $c\neq 0$ because $\eta>2$ and $h_n\to 0$. To show the first limit we start by noting that for any function $g_n$ such that $\sup_n\sup_{x\in\R} |g_n(x)|<\infty$ we have that as $n\to \infty$
	\begin{equation}\label{LimglambdaHn}
		\big| \big( g_n - g_n \1_{[-H_n, H_n]}\big)\ast K_{h_n} \, (x) \big|  \leq \sup_n\sup_{x\in\R} |g_n(x)| \int_{\substack{\\ \\ \big[\frac{x-H_n}{h_n}, \frac{x+H_n}{h_n}\big]^{C}}} \qsub \big| K(y)\big| \,  dy \to 0 \quad \mbox{for all } x\in \R,
	\end{equation}
	because $K\in L^1(\R)$ and $h_n^{-1}, H_n\to \infty$. Therefore the limit of  $f^{(\lambda)}_n \ast K_{h_n}$ as $n\to \infty$ equals that of $\tilde f^{(\lambda)}_n\ast K_{h_n}$ at every point, where $\tilde f^{(\lambda)}_n:= \1_{\R\setminus(-\varepsilon_n, \varepsilon_n)}$. Additionally, for any $x\in \R$,
	\begin{equation*}
		\1_{\R\setminus(-\varepsilon_n, \varepsilon_n)} \ast K_{h_n}(x) = 1- \1_{(-\varepsilon_n, \varepsilon_n)} \ast K_{h_n}(x)
	\end{equation*}
	because $\int_{\R} K =1$, so we only need to compute the limit of the convolution on the right hand side. This coincides with \eqref{Limgj} when $j=0$, so we now compute \eqref{Limgj} for any $j\in \Z$. Note that
	\begin{align*}
		\mathds{1}_{(  j-\varepsilon_n,  j+\varepsilon_n)} \ast K_{h_n}\, (x)= \int_{(x-  j-\varepsilon_n)/h_n}^{(x-  j+\varepsilon_n)/h_n}  K(y) \, dy \quad \mbox{ for any $x\in \R$.}
	\end{align*}
	By the decay of $|K|$, if $x\neq   j$ this is of order $O(\varepsilon_n h_n^{\eta -1})$ and if $x=  j$ then $1$ minus this display equals $\int_{[-\varepsilon_n/h_n,\varepsilon_n/h_n]^c} K$ due to $\int K=1$. Using that $h_n,\varepsilon_n\to 0$, $\eta>2$, $h_n \varepsilon_n^{-1}=o(1)$ and $K\in L^1(\R)$ we thus conclude that
	\begin{align}\label{glambda}
		\lim_{n\to \infty} \, \mathds{1}_{(  j-\varepsilon_n,  j+\varepsilon_n)} \ast K_{h_n} \equiv \1_{\{   j\}},
	\end{align}
	and consequently the first limit in \eqref{Limglambdagmu} and that in \eqref{Limgj} follow.
	
	To show \eqref{Limftn} note that, in view of \eqref{LimglambdaHn} because $\sup_n\sup_{x\in\R} |f^{(N)}_{t,n}(x)|<\infty$, the limit of $f^{(N)}_{t,n}\ast K_{h_n}$ as $n\to \infty$ equals that of $\tilde{f}^{(N)}_{t,n}\ast K_{h_n} $ at every point, where
	\begin{equation}\label{tildeftnandutnbias}
		\!\!\!\!\!\tilde{f}^{(N)}_{t,n}\!:=\! \1_{\R \setminus (-\varepsilon_n, \varepsilon_n)} \1_{(-\infty, u]} \quad \!\mbox{and} \!\quad u\!=\!u(t,n)\!:=\! \left\{  \begin{array}{lll}
			t & \mbox{if} \!\!\!\quad |t-j|>\varepsilon_n & \mbox{for all } j\!\in\! \Z, \\
			j-\varepsilon_n & \mbox{if} \!\!\!\quad  j-\varepsilon_n \leq t < j & \mbox{for some } j\!\in\! \Z, \\
			j+\varepsilon_n & \mbox{if} \!\!\!\quad j \leq t \leq j+\varepsilon_n & \mbox{for some } j\!\in\! \Z\!.
		\end{array}
		\right.\!
	\end{equation}
	Therefore, when $t= j$ for some $j\in \Z$ we have
	\begin{equation}\label{tildeftnj}
		\tilde{f}^{(N)}_{t,n} = \1_{(-\infty,  j-\varepsilon_n]} + \1_{( j-\varepsilon_n,  j+\varepsilon_n)}  - \1_{(-\varepsilon_n, \varepsilon_n)}\1_{[0,\infty)}(j) ,
	\end{equation}
	almost everywhere and hence the following arguments are not affected. Due to $\varepsilon_n \to 0$ as $n\to \infty$, for any other $t$ fixed we can assume 
	\begin{equation}\label{tildeftnnoj}
		\tilde{f}_{t,n}^{(N)} = \1_{(-\infty, t]}-\1_{(-\varepsilon_n, \varepsilon_n)} \1_{(0,\infty)}(t).
	\end{equation}
	Thus, and in view of \eqref{glambda}, we only need to compute the limit of 
	\begin{equation*}
		\1_{(-\infty, y]} \ast K_{h_n} (x) = \int_{(x-y)/h_n}^{\infty} K(z)\, dz
	\end{equation*}
	when $y= j-\varepsilon_n$ for some $j\in \Z$ and when $y=t$. For the former case we have $(x-y)/h_n= (x- j)/h_n +\varepsilon_n/h_n$. Hence the limit of the last display is $\int_{\R} K=1$ if $x< j$ and zero otherwise, which can be written as $\1_{(-\infty,  j)}$, and the limit of \eqref{tildeftnj} convolved with $K_{h_n}$ is $\1_{(-\infty, t]}\1_{\R\setminus\{0\}}$. When $y=t$ the same arguments apply to $x\neq t$ but when $x=t$ we obtain $\int_{\R^+}K=1/2$ by the symmetry of $K$. This gives the limiting function $\1_{(-\infty, t)}+ \frac{1}{2}\1_{\{t\}}$ and therefore the limit of \eqref{tildeftnnoj} convolved with $K_{h_n}$ is $\1_{(-\infty, t]}\1_{\R\setminus\{0\}} -  \frac{1}{2}  \1_{\{t\}}$, thus justifying \eqref{Limftn}. 
	
	The statement regarding the linear combination follows immediately for case (a). To show it for case (b) note that because $\varepsilon_n\to 0$ we can assume $\varepsilon_n<1/2$ and therefore for any $n$ fixed
	\begin{equation*}
		\sup_n\sup_{x\in \R} |f_{t,n}(x)| \leq \mbox{card}( \mathcal{T}) + 1 + 1\!<\!\infty \quad \mbox{and} \quad \norm{f_{t,n}}_2 \leq \mbox{card}( \mathcal{T}) (H_n+\varepsilon_n)^{1/2} + \widetilde{H}_n  \varepsilon_n^{1/2} + h_n^{1/2}\!< \!\infty.
	\end{equation*}
	Notice that assumptions $\vartheta_{\widetilde{H}}<1/2$ and $\vartheta_h<(1-2\vartheta_{\widetilde{H}})/4$ imply
	\begin{equation*}
		\widetilde{H}_n \, n^{-1/2} (\log h_n^{-1})^{2(1+\delta)} \sim n^{\vartheta_{\widetilde{H}}-1/2 + 2 (1+\delta)\vartheta_h } = o(1)
	\end{equation*}
	for any $\delta\in \big(0, (1-2\vartheta_{\widetilde{H}})/(4\vartheta_h) -1\big)$. This interval has non-empty intersection with that under which the first display of the proof holds and consequently for some of those $\delta>0$
	\begin{equation*}
		\int_{-h_n^{-1}}^{h_n^{-1}} |\FT f_{t,n}(u)| \, du = O(\widetilde{H}_n \log(h_n^{-1})) = o\big(n^{1/2} (\log h_n^{-1})^{-(1+2\delta)}\big)
	\end{equation*}
	as required. To compute the limit of $f_{t,n}\ast K_h$ we only need to work with the second sum in $f_{t,n}$ and we recall that for any $j\in \Z \setminus\{0\}$ 
	\begin{equation*}
		f^{(q_j)}_n \ast K_{h_n}(x) = \left\{  \begin{array}{ll}
			\int_{-\varepsilon_n/h_n}^{\varepsilon_n/h_n} K & \mbox{if } x=  j, \\
			& \\
			O(\varepsilon_n h_n^{\eta-1}) & \mbox{otherwise,} 
		\end{array}
		\right.
	\end{equation*}
	where $\eta>2$. Moreover, note that the assumption on the asymptotics of $\widetilde{H}_n$ implies $\widetilde{H}_n=o(n^{1/2})$ and therefore $\widetilde{H}_n h_n^{\epsilon}=o(1)$ for any $\epsilon>0$. Due to $\sup_{j\in \Z\setminus\{0\}} |C_{(j)}|<\infty$ and because $1-\int_{-\varepsilon_n/h_n}^{\varepsilon_n/h_n} K=o(1)$ the result follows in view of
	\begin{equation*}
		\! \sum_{|j|\leq \widetilde{H}_n } \!\! \! \! C_{(j)} \,\! f^{(q_j)}_n \ast K_{h_n}(x) \!=\! \left\{  \begin{array}{ll}
			\!\! C_{(j)}\! +\! o(1)\!+\!O\big( (\widetilde{H}_n-1) h_n^{\eta-2} \varepsilon_n h_n\big)\!=\!C_{(j)} \! +\! o(1) & \mbox{if } x=  j \mbox{ for} \\
			& \mbox{some $j\in \Z\setminus\{0\}$},\\
			\! O\big( \widetilde{H}_n h_n^{\eta-2} \varepsilon_n h_n\big)\!=\!o(1) & \mbox{otherwise.} 
		\end{array}
		\right.
	\end{equation*}
\end{proof}

All the results so far are developed to deal with stochastic quantities but, because of the use of a kernel in the estimators, we also need to control some non-stochastic quantities or bias terms. This is the content of the following lemma.

\begin{lem}\label{Lemmabiasterms}
	
	Adopt the setting and notation of Section \ref{SecDefNot} and assume $\nu$ satisfies Assumption \ref{enum:th:ass1a} for some $\alpha>0$. Let $K$ satisfy \eqref{conditionsK} and take $h_n\sim \exp(n^{-\vartheta_h})$, $\varepsilon_n\sim \exp(n^{-\vartheta_\varepsilon})$ and $H_n\sim \exp(n^{\vartheta_H})$ such that $1/\alpha<2 \vartheta_\varepsilon \leq\vartheta_h<\infty$ and $0<\vartheta_H <\vartheta_h $. Suppose that either
	\begin{enumerate}
		\item Assumption \ref{enum:th:ass2} holds for some $\beta>0$ and $ \vartheta_H \geq 1/(2\beta)$;
		\item or $\int_{\R} |x|^\beta \nu(dx)<\infty$ for some $\beta>0$.
	\end{enumerate}
	Define
	\begin{align*}
		B^{(\lambda)}_n := \frac{1}{{\Deltaup}} \int_{\R} f^{(\lambda)}_n (x) \FII{\Log \varphi \FT K_{h_n}} (x) \, dx - \lambda,
	\end{align*}
	\begin{equation*}
		\quad B^{(\gamma)}_n :=\frac{1}{ {\Deltaup}} \int_{\R} f^{(\gamma)}_n (x) \FII{\Log \varphi \FT K_{h_n}} (x) \, dx -h_n^{-1}\gamma, 
	\end{equation*}
	
	\begin{equation*}
		B^{(q_j)}_n := \frac{1}{{\Deltaup}} \int_{\R} f^{(q_j)}_n(x) \FII{\Log \varphi \FT K_{h_n}} (x) \, dx - q_j, \qquad j\in \Z\setminus\{0\},
	\end{equation*}
	and
	\begin{equation*}
		B^{(N)}_{t,n} := \frac{1}{{\Deltaup}} \int_{\R} f^{(N)}_{t,n}(x) \FII{\Log \varphi \FT K_{h_n}} (x) \, dx - N(t), \qquad t\in \R. 
	\end{equation*}
	Then, all these individual quantities are of order $o\big(n^{-1/2}\big)$ and so are $\sup_{j\in \Z\setminus\{0\}} |B^{(q_j)}_n|$ and $\sup_{t\in \R}|B^{(N)}_{t,n}|$. Furthermore, for any finite set $\mathcal{T}\subset \R$, any $C_{(\lambda)}, C_{(\gamma)}, C_{(j)}, C_t\in \R$, $j\in \Z\setminus\{0\}$ and $t\in \mathcal{T}$, and any $\widetilde{H}_n\to \infty$ define the linear combination
	
	\begin{equation}\label{eqbias}
		B_n:= C_{(\lambda)} B^{(\lambda) }_n + \gamma B^{(\gamma) }_n+\sum_{ j \in \Z\setminus\{0\}} C_{(j)} \big( (B^{(q_j)}_n+q_j) \1_{|j| \leq \widetilde H_n }  - q_j\big) +  \sum_{t\in \mathcal{T}} C_t B^{(N)}_{t,n}. 
	\end{equation}
	Then, $ B_n = o\big(n^{-1/2}\big)$ if either 
	\begin{enumerate}[label*=(\alph**)]
		\item condition (a) above is satisfied and finitely many coefficients $C_{(j)}$ are not zero;
		\item or condition (b) above is satisfied, $\sup_{j\in \Z\setminus\{0\}} |C_{(j)}| <\infty$,  $\widetilde{H}_n\sim n^{\vartheta_{\widetilde{H}}}$ for some $\vartheta_{\widetilde{H}}\geq 1/(2\beta)$ and, moreover, $\vartheta_{\varepsilon}>(1+2\vartheta_{\widetilde{H}})/(2\alpha)$.
	\end{enumerate}
	
\end{lem}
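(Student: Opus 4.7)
My first step is to turn each bias into an explicit deterministic convolution. From \eqref{eqLevyKhintchineCPP}, $\Deltaup^{-1}\Log\varphi(u)=i\gamma u+\FT\nu(u)-\lambda$, so arguing as in Lemma \ref{LemmaFIphim1FTKh} (writing $\varphi$ as a product, using the compact support of $\FT K_{h_n}$ to pass to honest convolutions) and integrating by parts against the $i\gamma u$ factor, one obtains
\begin{equation*}
\frac{1}{\Deltaup}\int_{\R} f(x)\,\FII{\Log\varphi\,\FT K_{h_n}}(x)\,dx \;=\; \gamma\,(f*K_{h_n})'(0)\;-\;\lambda\,(f*K_{h_n})(0)\;+\;\int_{\R}(f*K_{h_n})\,d\nu
\end{equation*}
for any compactly supported $f$ (which covers every choice here). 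This is the rigorous version of \eqref{eqHeuristicsEstimators} with $\delta_0$ replaced by the approximate identity $K_{h_n}$, and reduces every $B_n^{(\cdot)}$ to the discrepancy between this expression and the corresponding target.

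\textbf{Controlling the individual biases.} For each $f_n\in\{f_n^{(\lambda)},f_n^{(\gamma)},f_n^{(q_j)},f_{t,n}^{(N)}\}$ I would evaluate the three terms above. Several contributions drop out immediately by parity: $f_n^{(\lambda)}$ and $K$ are even, so the $\gamma$-term vanishes in $B_n^{(\lambda)}$; $f_n^{(\gamma)}$ is odd, so the $\lambda$-term vanishes in $B_n^{(\gamma)}$, and the constant $c$ is chosen precisely so that the $\gamma$-term produces the correct leading behaviour. The remaining work is in $\int(f_n*K_{h_n})\,d\nu$ minus its target, split via $\nu=\nu_d+\nu_{ac}$. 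For $\nu_d=\sum_{j\in\Z\setminus\{0\}}q_j\delta_j$, the polynomial decay of $K$ together with $h_n\varepsilon_n^{-1}\to 0$ and $h_nH_n\to 0$ yields a stretched-exponential bound on $(f_n*K_{h_n})(j)$ minus its target value, uniformly in $j$. For $\nu_{ac}$, the contributions from the $\varepsilon_n$-neighbourhood of $0$ (and, for $f_n^{(q_j)}$ and $f_{t,n}^{(N)}$, of the relevant atoms) are bounded by Assumption \ref{enum:th:ass1b}, giving $\lesssim\log^{-\alpha}(\varepsilon_n^{-1})=n^{-\alpha\vartheta_\varepsilon}$; the tails $\{|x|>H_n\}$ are bounded either by Assumption \ref{enum:th:ass2} ($\lesssim n^{-\beta\vartheta_H}$) or, under the polynomial moment of part (b), by $H_n^{-\beta}$ which is exponentially small. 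The hypotheses $2\vartheta_\varepsilon>1/\alpha$ and $\vartheta_H\ge 1/(2\beta)$ make both contributions $o(n^{-1/2})$, and none of the bounds depend on the location of the atom or of $t$, so the uniform statements for $B_n^{(q_j)}$ and $B_{t,n}^{(N)}$ follow as well.

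\textbf{Linear combination.} Case (aa): for $n$ large enough $\widetilde H_n$ dominates the finite support of the nonzero $C_{(j)}$'s, so $\1_{|j|\le\widetilde H_n}\equiv 1$ on that support, the $-q_j$ telescoping cancels, and $B_n$ reduces to a finite linear combination of quantities already shown to be $o(n^{-1/2})$. Case (bb): the two new estimates needed are
\begin{equation*}
\Big|\!\!\sum_{0<|j|\le\widetilde H_n}\!\!\! C_{(j)}B_n^{(q_j)}\Big|\lesssim \widetilde H_n\sup_{j}\big|B_n^{(q_j)}\big|\lesssim n^{\vartheta_{\widetilde H}-\alpha\vartheta_\varepsilon},\qquad \Big|\!\!\sum_{|j|>\widetilde H_n}\!\! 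C_{(j)}q_j\Big|\lesssim \widetilde H_n^{-\beta}\int_{\R}|x|^{\beta}\,d\nu(x),
\end{equation*}
which are $o(n^{-1/2})$ by $\vartheta_\varepsilon>(1+2\vartheta_{\widetilde H})/(2\alpha)$ and $\vartheta_{\widetilde H}\ge 1/(2\beta)$ respectively. The remaining terms in $B_n$ are absorbed by the individual bounds of the previous paragraph.

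\textbf{Where I expect the difficulty.} The main technical obstacle is the uniformity in $t$ for $B_{t,n}^{(N)}$: the three-case structure of \eqref{defftn} is designed precisely so that the indicator's boundary stays at distance at least $\varepsilon_n$ from every atom of $\nu_d$, but verifying that the resulting boundary errors (from $K_{h_n}$-smoothing near the atoms and near $\pm H_n$) are $o(n^{-1/2})$ simultaneously for all $t\in\R$ requires a careful case split feeding the logarithmic modulus from Assumption \ref{enum:th:ass1b} and the tail condition in the right places. The second hurdle is the parameter juggling in case (bb): the near-origin error $n^{\vartheta_{\widetilde H}-\alpha\vartheta_\varepsilon}$ and the tail error $n^{-\beta\vartheta_{\widetilde H}}$ must both be sub-$n^{-1/2}$, and the hypothesis $\vartheta_\varepsilon>(1+2\vartheta_{\widetilde H})/(2\alpha)$ is tight for exactly this trade-off.
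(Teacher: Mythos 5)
Your proposal is correct and follows essentially the same route as the paper: the deterministic identity you state is exactly the paper's starting decomposition of $\Deltaup^{-1}\int f\,\FII{\Log\varphi\,\FT K_{h_n}}$ into drift, intensity, discrete and absolutely continuous contributions, the individual biases are controlled by the same combination of parity, kernel decay, the logarithmic modulus of Assumption 1(b) near the atoms and the tail moment conditions, and the two estimates you give for case (b*) are precisely the paper's. The only point needing care is the boundary cases $\vartheta_H=1/(2\beta)$ and $\vartheta_{\widetilde H}=1/(2\beta)$, where your pure power bounds ($n^{-\beta\vartheta_H}$ and $\widetilde H_n^{-\beta}\int_{\R}|x|^{\beta}\,\nu(dx)$) only give $O(n^{-1/2})$; the paper upgrades this to $o(n^{-1/2})$ by retaining the vanishing tail integrals $\int_{|x|>H_n}(\log|x|)^{\beta}\,\nu(dx)$ and $\int_{|x|>\widetilde H_n}|x|^{\beta}\,\nu(dx)$ in the respective bounds.
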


\begin{proof}
	For any $g=g_n$ bounded and by the same arguments that justify the first equality in \eqref{eqHeuristicsEstimators}, 
	\begin{align}\label{eqbiasmain}
		\frac{1}{{\Deltaup}} \int_{\R} g (x) \FII{\Log \varphi \FT K_{h_n}} (x) \, dx =  &  - \gamma \int_{\R} g (x) ( K_{h_n })'(x) -\lambda \int_{\R} g (x) K_{h_n }(x) \, dx \\
		& + \sum_{\mathclap{j\in  \Z\setminus\{0\}}} \,  q_j  \int_{\R} g (x) K_{h_n }(x -  j) \, dx + \int_{\R} g (x) \,  \nu_{ac} \ast K_{h_n}\, (x) \, dx, \notag
	\end{align}
	where we have swapped the infinite sum with the integral because for any $m\in \N$ and $x\in \R$
	\begin{equation*}
		\Bigg|\sum_{\substack{ |j| \leq m \\ j \neq 0}} q_j  g (x) K_{h_n }(x -  j)\Bigg| \leq \sup_{y\in \R} |g(y)| \sum_{j\in \Z\setminus \{0\}} q_j  \left|K_{h_n }\right|(x -  j)
	\end{equation*}
	and that the right hand side is integrable because $\norm{K_h(\cdot - y)}_1=\norm{K}_1$ for all $y\in \R$ and $\sum_{j\in \Z\setminus\{0\}} q_j \leq \lambda<\infty$. 
	
	We start by computing $B^{(\gamma) }_n$. Taking $g=g_n=h_n f^{(\gamma)}_n$ in \eqref{eqbiasmain}, using integration by parts in the first summand and, in view of $\nu_{ac}, K\in L^1(\R)$, Fubini's theorem in the last one, we have that
	\begin{align*}
		c  B^{(\gamma) }_n\!= & \, \gamma \, h_n^{-1} \bigg( \int_{ -1}^{1} K (x) \, dx -\big( K(-1) \!+\! K(1) \big) \!-\! c \bigg)   - \lambda \!\int_{- 1}^{1}\!x \, K (x) \, dx  \\
		& +  \sum_{\mathclap{j\in  \Z\setminus\{0\}}} \, q_j  \int_{\R} \! \frac{h_n x +   j }{h_n}\1_{(-h_n, h_n)}(h_n x +   j )  K(x) \, dx + \!\int_{\R} \!K(y)  \!\int_{\substack{  	\\ \\  -h_n y+h_n}}^{\substack{ - h_n y-h_n \\ \\ }} \!\!\!\!\! \qsub \qsub h_n^{-1} (x+h_n y)  \nu_{ac} (x) \, dx \, dy.
	\end{align*}
	By the symmetry and integrability of $K$, the decay of $|K|$ in \eqref{conditionsK}, the fact that $c:= 2(\int_{[0,1]}K - K(1) )$ and using that Assumption \ref{enum:th:ass1b} is satisfied for some $\alpha>0$ we conclude that
	\begin{equation*}
		|B^{(\gamma) }_n| \lesssim h_n^{\eta} \sum_{j\in  \Z\setminus\{0\}}  \,  q_j  \, j^{-\eta}  + (\log(h_n^{-1}))^{-\alpha}=O\big((\log(h_n^{-1}))^{-\alpha}\big) =o\big( n^{-1/2}\big)
	\end{equation*}
	for some $\eta\!>\!2$, where the last equality follows because $h_n\!\sim\! \exp(n^{-\vartheta_h})$ for some $\vartheta_h\!>\!1/\alpha\!>\!1/(2\alpha)$. 
	
	To compute $B^{(\lambda) }_n$ we write $f^{(\lambda)}_n=1-\1_{[-H_n, H_n]^C}-\1_{(-\varepsilon_n,\varepsilon_n )} $ and analyse \eqref{eqbiasmain} when $g$ equals each of these quantities. When $g=1$ we immediately see it equals $0$ because $K$ is symmetric, $\int_{\R}K=1$ and $\int_{\R} \nu=\lambda$. To analyse the second summand we first consider \eqref{eqbiasmain} for the more general function $ g=g_n= \tilde g_n \1_{[-H_n, H_n]^C}$, where $\tilde g_n$ is a function satisfying $\sup_n \sup_{x\in\R} |\tilde g_n(x)|<\infty$. This generalisation is used later on. In this case the right hand side of \eqref{eqbiasmain} equals
	\begin{align}\label{eqbiastailsvanish}
		-\gamma \, \int_{\substack{ \quad \\ \quad \\ [-H_n, H_n]^C}} \qsub \qsub  \tilde g_n(x) (K_{h_n})'(x) \, dx  - \lambda \int_{\substack{ \quad \\ \quad \\ [-H_n, H_n]^C}} \qsub \qsub \tilde g_n (x) K_{h_n}(x) \, dx + \int_{\R} K(y) \int_{\substack{ \quad \\ \quad \\ [-H_n-h_n y, H_n-h_n y]^C}} \qqsub  \tilde g_n(x+h_n y) \, \nu(dx)\,  dy. 
	\end{align}
	The value of the first integral depends on the function $\tilde g_n$. In this calculation $\tilde g_n=1$, so it equals $K_{h_n}(-H_n)-K_{h_n}(H_n)$, which is zero by the symmetry of $K$. By the decay of $|K|$ in \eqref{conditionsK}, the absolute value of the second integral is bounded above by
	\begin{equation*}
		\sup_{x\in\R} |\tilde g_n(x)|   \int_{\substack{ \quad \\ H_n/h_n}}^{\substack{ \infty \\}} \!\!\!\!  |K|(x) \, dx   = O\big( (h_n/H_n)^{\eta-1}\big) =o\big( n^{-1/2}\big)
	\end{equation*}
	for some $\eta>2$, where the last equality is justified by the exponential decay of $h_n$ and $H_n\to \infty$. The absolute value of the last term in \eqref{eqbiastailsvanish} is bounded, up to constants independent of $n$, by
	\begin{equation*}
		\sup_{x\in\R} |\tilde g_n(x)| \Big( \int_0^{h_n^{-1}} |K| (x) \, dx \int_{H_n}^\infty \nu(dx) + \lambda \int_{h_n^{-1}}^\infty |K| (x) \, dx \Big).
	\end{equation*}
	If condition (a) of the lemma is satisfied $\int_{H_n}^\infty \nu(dx)\leq (\log H_n)^{-\beta} \int_{H_n}^\infty (\log x)^{\beta} \nu(dx)=o\big( n^{-1/2}\big)$. If instead condition (b) holds $\int_{H_n}^\infty \nu(dx) \leq (H_n)^{-\beta} \int_{H_n}^\infty  x^{\beta} \nu(dx)=o\big( n^{-1/2}\big)$. Therefore, and using the decay of $|K|$ and $h_n$, the last display is $o\big( n^{-1/2}\big)$. Consequently we conclude that the second summand in the decomposition of $f^{(\lambda)}_n$ is $o\big(n^{-1/2}\big)$. To analyse the third one we need to compute \eqref{eqbiasmain} when $g_n=f^{(q_0)}_n$. Hence, we first compute $B^{(q_j)}_n$ for any $j\in \Z$ and, only during this calculation and with some abuse of notation, we take $q_0=-\lambda$ in its expression for notational purposes. In view of \eqref{eqbiasmain} we have that 
	\begin{align*}
		B^{(q_j)}_n= & \,\gamma \, h_n^{-1} \bigg(  K\Big(\frac{ j -\varepsilon_n}{h_n}\Big) -  K\Big(\frac{ j +\varepsilon_n}{h_n}\Big) \bigg)  + q_j \int_{\substack{ \quad \\  ( - \varepsilon_n/h_n, \varepsilon_n/h_n)^C}} \qsub \qsub   K (x) \, dx  \\
		& + \sum_{l\in  \Z\setminus\{j\}}  \,  q_l  \int_{\substack{ \quad \\  ( (j-l) - \varepsilon_n)/h_n}}^{\substack{ ( (j-l) + \varepsilon_n)/h_n \\ \quad}}  \qsub  K (x) \, dx  + \int_{\R} K(y)  \int_{\substack{ \quad \\  j -h_n y - \varepsilon_n}}^{\substack{  j - h_n y + \varepsilon_n\\ \quad}}  \qsub \nu_{ac} (x) \, dx \, dy.
	\end{align*}
	Then, by the symmetry of $K$, the decay of $|K|$, the fact that $\int_{\R} K=1$, Assumption \ref{enum:th:ass1b} and the integrability of $K$, the last display can be bounded, up to constants independent of $j$ and $n$, by
	\begin{align}\label{eq|Bj|bias}
		|B^{(q_j)}_n| & \lesssim   + \1_{\Z\setminus\{0\}}(j) \, h_n^{\eta-1}+ |q_j| \, (h_n/\varepsilon_n)^{\eta-1} + h_n^{\eta-1} \sum_{l\in  \Z\setminus\{0\}}  \,  q_{j-l}  \, l^{1-\eta} + (\log(\varepsilon_n^{-1}))^{-\alpha} \notag \\
		& \lesssim h_n^{\eta-1} + \lambda \, (h_n/\varepsilon_n)^{\eta-1} + \lambda \, h_n^{\eta-1} \sum_{l\in  \Z\setminus\{0\}}  l^{1-\eta} +(\log(\varepsilon_n^{-1}))^{-\alpha} =o\big(n^{-1/2}\big)
	\end{align}
	for some $\eta>2$ and $\alpha>0$, where the last equality follows because $h_n\sim\exp(-n^{\vartheta_h})$, $\varepsilon_n\sim \exp(-n^{\vartheta_\varepsilon})$ with $\vartheta_h \geq 2 \vartheta_\varepsilon>\vartheta_\varepsilon$ and $\vartheta_\varepsilon>1/(2\alpha)>0$. This ends showing that $B^{(\lambda) }_n=o\big(n^{-1/2}\big)$ and, in addition, it shows that $B^{(q_j) }_n$, $j\in \Z\setminus\{0\}$, and its supremum are of the same order.
	
	To analyse $B^{(N)}_{t,n}$ we work directly with its supremum over $t\in\R$. We first remove the truncation in the tails of $f^{(N)}_{t,n}$, just as we have done for $B^{(\lambda) }_n$. Take $\tilde g_n\!=\!\tilde g_{t,n}\!=\!\tilde{f}_{t,n}^{(N)}$ in \eqref{eqbiastailsvanish}, where $\tilde{f}_{t,n}^{(N)}$ is defined in \eqref{tildeftnandutnbias}. Note that the upper bounds for the absolute values of the last two summands of \eqref{eqbiastailsvanish} only depend on $t$ through $\sup_{x\in\R} |\tilde g_{t,n}(x)|$. Furthermore $ \sup_n \sup_{t\in \R} \sup_{x\in \R}|\tilde{g}_{t,n}(x)|<\infty$, so the supremum of these two terms are of order $o\big(n^{-1/2}\big)$ by the same arguments used therein. Therefore, if we show that the supremum over $t\in \R$ of the absolute value of the first summand is $o\big(n^{-1/2}\big)$, we only need to analyse the supremum of \eqref{eqbiasmain} for $g=g_{t,n}=\tilde{f}_{t,n}^{(N)}$. But the former follows easily noting that the absolute value of the first summand in \eqref{eqbiastailsvanish}  when $\tilde g_{t,n}=\tilde{f}_{t,n}^{(N)}$ is bounded above by $\gamma h_n^{-1}|K|(H_n/h_n)$ for any $t\in \R$, and this quantity is of order $O\big((h_n/H_n)^{\eta-1} H_n^{-1}\big)=o\big(n^{-1/2}\big)$ for some $\eta>2$ by the decay of $|K|$, $h_n$ and $H_n^{-1}$. To analyse \eqref{eqbiasmain} when $g_{t,n}=\tilde{f}_{t,n}^{(N)}$ we note that for $n$ large enough $\tilde{f}_{t,n}^{(N)}=\1_{(-\infty, \min\{u,-\varepsilon_n\}]}+\1_{[\min\{u,\varepsilon_n\}, u]}$, where $u=u(t,n)$ is defined in \eqref{tildeftnandutnbias}. Inputting this into \eqref{eqbiasmain} gives
	\begin{align*}
		- \gamma \Big( K_{h_n}\big(\min\{u,-\varepsilon_n\}\big) + K_{h_n}(u)- K_{h_n}\big(\min\{u,\varepsilon_n\}\big) \Big) - \lambda \int_{\substack{\quad \\ (-\varepsilon_n,\varepsilon_n)^{C}}} \1_{(-\infty, u]} (x) K_{h_n }(x) \, dx \\
		+\int_{-\infty}^u \nu \ast K_{h_n} \, (x) \, dx \, - \,  \sum_{\mathclap{j\in  \Z\setminus\{0\}}} \,  q_j  \int_{\substack{\quad \\\min\{u,-\varepsilon_n\}}}^{\substack{\min\{u,\varepsilon_n\} \\ \quad }} \qsub K_{h_n }(x -  j) \, dx + \int_{\R} K(y) \int_{\substack{\quad \\\min\{u,-\varepsilon_n\} -h_n y}}^{\substack{\min\{u,\varepsilon_n\} -h_n y \\ \quad }} \qsub \nu_{ac} (x) \, dx \, dy.
	\end{align*}
	We now bound the absolute value of all these terms but the first in the second line by quantities independent of $t$ and of order $o\big(n^{-1/2}\big)$. By the symmetry of $K$ and the decay of $|K|$, the absolute value of the first summand is bounded by $\gamma h_n^{-1} |K|(\varepsilon_n/h_n)|=O(h_n^{\eta-1} \varepsilon_n^{-\eta})$ for some $\eta>2$ and, since $h_n\!\sim\!\exp(-n^{\vartheta_h})$, $\varepsilon_n\!\sim \!\exp(-n^{\vartheta_\varepsilon})$ and $\vartheta_h \!\geq\! 2 \vartheta_\varepsilon$, it is $O(\exp( n^{\vartheta_\varepsilon (2-\eta)}))$ and thus of the required order. These arguments and  $\sup_n\sup_{t\in \R} \sup_{x\in \R} |\tilde{f}_{t,n}^{(N)}(x)|\!\leq\! 1$ justify that the absolute value of second summand is bounded by $2 \lambda \int_{\varepsilon_n/h_n}^{\infty} |K|\!=\! O\big((h_n/\varepsilon_n)^{\eta-1}\big)\!=\!o\big(n^{-1/2}\big)$. Furthermore, for any $t\!\in \!\R$ the absolute value of the second summand in the second line is bounded by
	\begin{equation*}
		\sum_{j\in  \Z\setminus\{0\}}   q_j  \int_{ (-\varepsilon_n- j)/h_n}^{(\varepsilon_n- j)/h_n }  |K|(x) \, dx \lesssim h_n^{\eta-1} \sum_{j\in  \Z\setminus\{0\}}   q_j  j^{1-\eta}= o\big(n^{-1/2}\big).
	\end{equation*}
	Due to Assumption \ref{enum:th:ass1b} and $K\in L^1(\R)$, for any $t\in \R$ the absolute value of the last summand is bounded by 
	\begin{equation*}
		\int_{\R} K(y) \int_{-\varepsilon_n -h_n y}^{\varepsilon_n -h_n y } \nu_{ac} (x) \, dx \, dy \lesssim ( \log (\varepsilon_n^{-1}))^{-\alpha}= o\big(n^{-1/2}\big),
	\end{equation*}
	where the last equality follows because  $\varepsilon_n\sim \exp(-n^{\vartheta_\varepsilon})$ with $\vartheta_\varepsilon>1/(2\alpha)$. We are therefore left with proving that the supremum over $t\in \R$ of the absolute value of
	\begin{align}\label{eqbiast}
		\int_{-\infty}^u \nu \ast K_{h_n} - N(t) = & \sum_{j\in  \Z\setminus\{0\}}  q_j  \bigg(\int_{-\infty}^u  K_{h_n}(x-  j) \, dx - \1_{(-\infty, t]}(  j) \bigg) \notag \\
		& + \int_{\R} K(y) \bigg( \int_{-\infty}^{u -h_n y }  \nu_{ac} (x) \, dx  - \int_{-\infty}^t \nu_{ac} (x) \, dx \bigg) dy 
	\end{align}
	is of the desired order, where to arrive to the last summand we have used that $\int_{\R} K=1$. Note that the quantity in brackets in the infinite sum is
	\begin{equation*}
		\int_{-\infty}^{(u- j)/h_n } K(x) \, dx - \1_{(-\infty, t]}(  j) = \left\{ \begin{array}{ll}
			\int_{(u- j) /h_n}^{\infty} K & \mbox{if } t\geq   j, \\
			& \\
			\int^{(u- j) /h_n}_{-\infty} K & \mbox{if } t<   j. \\
		\end{array}   
		\right.
	\end{equation*}
	For any $t\in \R$ and in view of \eqref{tildeftnandutnbias} we then have that the absolute value of the first summand in \eqref{eqbiast} is bounded by
	\begin{equation*}
		\sum_{j\in  \Z\setminus\{0\}} \,  q_j  \bigg(\int_{-\infty}^{-\varepsilon_n/h_n}  |K|(x) \, dx + \int^{\infty}_{\varepsilon_n/h_n}  |K|(x) \, dx  \bigg) = O\big( (h_n/\varepsilon_n)^{\eta-1}\big) = o\big(n^{-1/2}\big).
	\end{equation*}
	Using Assumption \ref{enum:th:ass1b}, the integrability of $K$, the decay of $|K|$, expression \eqref{tildeftnandutnbias} and the fact that $\int_{\R}\nu_{ac}\leq \lambda$, we conclude that the absolute value of the second summand in \eqref{eqbiast} is bounded, up to constants independent of $t$ and $n$, by
	\begin{align*}
		\int_{-\varepsilon_n/h_n}^{\varepsilon_n/h_n}  |K|(y) (\log(|u-h_n y &-t|^{-1}))^{-\alpha} dy + \lambda \int_{(-\varepsilon_n/h_n, \varepsilon_n/h_n)^{C}}  |K|(y)\, dy \\ &\lesssim (\log(\varepsilon_n^{-1}))^{-\alpha} +  (h_n/\varepsilon_n)^{\eta-1}  = o\big(n^{-1/2}\big),
	\end{align*}
	where the constant hidden in the notation $\lesssim$ is clearly independent of $t$ because $|u-t|\leq \varepsilon_n$ for all $t\in \R$ and the equality follows by the same arguments as above. We have then just shown that $\sup_{t\in \R} |B^{(N)}_{t,n}|=o\big(n^{-1/2}\big)$.
	
	For the last statement of the lemma regarding the linear combination $B_n$ note that
	\begin{equation*}
		\sum_{ j \in \Z\setminus\{0\}} C_{(j)} \big(  (B^{(q_j)}_n+q_j) \1_{|j| \leq \widetilde H_n }  - q_j\big) = \sum_{ \substack{|j| \leq \widetilde H_n \\ j \neq 0}} C_{(j)} B^{(q_j)}_n - \sum_{|j| > \widetilde H_n} C_{(j)} q_j.
	\end{equation*}
	Due to $\widetilde{H}_n\to \infty$, when finitely many coefficients $C_{(j)}$ are not zero $B_n$ has the required order because the individual quantities featuring in its expression do. For the same reason, under the alternative assumptions of (b*) we only have to analyse the last display to find the order of $B_n$. In view of the $j$-independent upper bound for $|B^{(q_j)}_n|$ in \eqref{eq|Bj|bias}, the absolute value of the first sum is bounded, up to constants independent of $n$, by
	\begin{equation*}
		\sup_{j\in \Z\setminus\{0\}} |C_{(j)}| \,\, \widetilde{H}_n \, \big( h_n^{\eta-1}  +  (h_n/\varepsilon_n)^{\eta-1} + (\log(\varepsilon_n^{-1}))^{-\alpha}\big) =o\big(n^{-1/2}\big),
	\end{equation*}
	where the last equality is justified because $\widetilde{H}_n\sim n^{\vartheta_{\widetilde{H}}}$, $h_n\sim \exp(n^{-\vartheta_{h}})$ and $\varepsilon_n\sim \exp(n^{-\vartheta_\varepsilon})$, with $\vartheta_h \geq 2 \vartheta_\varepsilon>\vartheta_\varepsilon$ and $\vartheta_{\varepsilon}>(1+2\vartheta_{\widetilde{H}})/(2\alpha)$. The absolute value of the second sum can be bounded, up to constants independent of $n$, by
	\begin{equation*}
		\sup_{j\in \Z\setminus\{0\}} |C_{(j)}|  \int_{ [-\widetilde{H}_n, \widetilde{H}_n]^{C}} \nu(dx) 
		\lesssim \widetilde{H}_n^{-\beta} \int_{\substack{\quad \\ [-\widetilde{H}_n, \widetilde{H}_n]^{C}}} |x|^{\beta} \nu(dx),
	\end{equation*}
	and this has order $o\big(n^{-1/2}\big)$ because $\widetilde H_n \sim n^{\vartheta_{\widetilde H}}$ for some $\vartheta_{\widetilde H}\geq 1/(2\beta)$ by assumption. Therefore, the infinite sum in $B_n$ is of order $o\big(n^{-1/2}\big)$ and so is $B$ under the second set of assumptions because we analysed \eqref{eqbiastailsvanish} when these hold. 
\end{proof}

Lastly, the following result guarantees joint convergence of finitely and infinitely many one-dimensional estimators and provides an explicit representation of the covariance of the asymptotic distributions. It follows immediately from the preceding results, and Propositions \ref{Proplambdamu} and \ref{Proppsqs} are corollaries of it.

\begin{thm}\label{ThmCLTEstimators}
	
	Adopt the setting and notation of Sections \ref{SecDefNot} and \ref{SecSettingEstimators}, and assume the finite measure $\nu$ satisfies Assumption \ref{enum:th:ass1} for some $\alpha>4$. Suppose $K$ satisfies \eqref{conditionsK} and take $h_n\sim \exp(n^{-\vartheta_h})$, $\varepsilon_n\sim \exp(n^{-\vartheta_\varepsilon})$ and $H_n\sim \exp(n^{\vartheta_H})$ such that $1/\alpha<2\vartheta_\varepsilon \leq \vartheta_h<1/4$ and $0<\vartheta_H <\vartheta_h $.  For any finite set $\mathcal{T}\subset \R$, any $C_{(\lambda)}, C_{(\gamma)}, C_{(j)}, C_t\in \R$, $j\in \Z\setminus\{0\}$ and $t\in \mathcal{T}$, and any $\widetilde{H}_n\to \infty$ we define the linear combination
	\begin{equation*}
		\widehat \varUpsilon_n\!:=\!\sqrt{n}  \Big( \!C_{(\lambda)} \big(\hat{\lambda}_n - \lambda\big)  + C_{(\gamma)} h_n^{-1}\big(\hat{\gamma}_n - \gamma\big) +\!\!\sum_{ j \in \Z\setminus\{0\}} \!\!C_{(j)} \big( \hat q_{j,n} \1_{|j| \leq \widetilde H_n }  - q_j\big) +  \sum_{t\in \mathcal{T}} C_t \big(\widehat{N}_n(t)-N(t)\big) \!\Big).
	\end{equation*}
	Furthermore, assume that either 
	\begin{enumerate}
		\item finitely many  $C_{(j)}$ are not zero, Assumption \ref{enum:th:ass2} holds for some $\beta>2$ and $\vartheta_H \geq 1/(2\beta)$;
		\item or $\sup_{j\in \Z\setminus\{0\}} \!|C_{(j)}| <\!\infty$, $\int_{\R}\! |x|^\beta \nu(dx)\!<\!\infty$ for some $\beta\!>\!1$, Assumption \ref{enum:th:ass1a} is satisfied for some $\alpha\!>\!8\beta/(\beta-1)$, $\widetilde{H}_n\!\sim\! n^{\vartheta_{\widetilde{H}}}$ for some $\vartheta_{\widetilde{H}}\!\in\! [1/(2\beta), 1/2)$, $\vartheta_h \!<\! (1-2\vartheta_{\widetilde{H}})/4$ and $\vartheta_\varepsilon \!>\!1/\alpha$.
	\end{enumerate}
	Then we have that
	\begin{align}\label{lincombCLT}
		\widehat \varUpsilon_n \to^d \! N\big(0,\sigma^2 \big) \qquad \mbox{as } n\to \infty
	\end{align}
	where $\sigma^2$ is finite and satisfies
	\begin{align*} 
		{\Deltaup}^2\sigma^2 \!=\!  \int_{\R} \! \left( l \ast \FII{ \varphi^{-1}(-\cdot)} \! (x) \right)^2 \! P(dx), \quad \! \mbox{where} \!\quad l \! := C_{(\lambda)} f^{(\lambda)} \! +\! \! \!\sum_{ j \in \Z\setminus\{0\}} \! \!\! C_{(j)}  f^{(q_j)} \!+  \sum_{t\in \mathcal{T}} C_t f^{(N)}_{t}\!.
	\end{align*}
\end{thm}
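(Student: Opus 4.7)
The plan is to collapse the linear combination $\widehat\varUpsilon_n$ into a single Plancherel-type integral and apply in sequence the linearisation of Theorem~\ref{ThmLinearisation}, the bias control of Lemma~\ref{Lemmabiasterms}, and the generic central limit theorem~\ref{ThmGenericCLT}. Concretely, I would set
\[
f_n := C_{(\lambda)} f^{(\lambda)}_n + C_{(\gamma)} h_n^{-1} f^{(\gamma)}_n + \!\!\sum_{\substack{|j|\le\widetilde H_n \\ j \ne 0}}\!\! C_{(j)} f^{(q_j)}_n + \sum_{t\in\mathcal T} C_t f^{(N)}_{t,n},
\]
and read off that $\widehat\varUpsilon_n$ equals $\sqrt n\bigl({\Deltaup}^{-1}\int f_n\,\FII{\Log\varphi_n\FT K_{h_n}} - \mu_n\bigr)$, where $\mu_n$ gathers every centring and the deterministic tail $\sum_{|j|>\widetilde H_n} C_{(j)} q_j$ is absorbed into the aggregate bias below. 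Adding and subtracting the deterministic analogue with $\varphi$ in place of $\varphi_n$ then splits $\widehat\varUpsilon_n$ into a bias piece $\sqrt n\, B_n$ matching the combination in \eqref{eqbias} (with the $h_n^{-1}$ absorbed into the $\gamma$-coefficient so that $C_{(\gamma)} h_n^{-1} B^{(\gamma)}_n$ plays the role of $\gamma B^{(\gamma)}_n$) and a stochastic piece driven by $\Log(\varphi_n/\varphi)$.

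For the bias, I would invoke Lemma~\ref{Lemmabiasterms} directly: its case~(a) covers the theorem's case~(a) under Assumption~\ref{enum:th:ass2} and $\vartheta_H\ge 1/(2\beta)$, and its case~(b*) covers case~(b), where $\vartheta_\varepsilon>1/\alpha$ combined with $\vartheta_{\widetilde H}<1/2$ yields $\vartheta_\varepsilon>(1+2\vartheta_{\widetilde H})/(2\alpha)$. Either way $\sqrt n\, B_n = o(1)$. For the stochastic piece I would apply Theorem~\ref{ThmGenericCLT} with $g_n := f_n$, whose three hypotheses in \eqref{assumpggenCLT} are verified by Lemma~\ref{Lemmaglambdagnu}: part~(a) suffices under case~(a); part~(b) covers case~(b), where the conditions $\vartheta_{\widetilde H}<1/2$ and $\vartheta_h<(1-2\vartheta_{\widetilde H})/4$ it requires are precisely those imposed in the theorem.

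This yields $\widehat\varUpsilon_n\to^d N(0,\sigma_g^2)$ with ${\Deltaup}^2\sigma_g^2 = \int(g\ast\FII{\varphi^{-1}(-\cdot)})^2\,dP - g(0)^2$ and $g := \lim_{n\to\infty} f_n\ast K_{h_n}$. Lemma~\ref{Lemmaglambdagnu} identifies
\[
g = C_{(\lambda)} f^{(\lambda)} + \sum_{j\ne 0} C_{(j)} f^{(q_j)} + \sum_{t\in\mathcal T} C_t l_t, \qquad l_t = f^{(N)}_t - \tfrac{1}{2}\1_{\{t\}}\1_{\R\setminus\Z}(t).
\]
Since $f^{(\lambda)}(0)=f^{(q_j)}(0)=0$ for $j\ne 0$ and $l_t(0)=0$ (because $0\in\Z$), one has $g(0)=0$ and the subtractive term disappears. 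Finally $g$ agrees with the target $l$ of the theorem except on the finite subset $\{t\in\mathcal T: t\notin\Z\}$, which has Lebesgue measure zero and is disjoint from $\Z=\supp(\nu_d)$, so identity~\eqref{LimEqual} of Lemma~\ref{Lemmalimsints} gives $\sigma_g^2=\sigma^2$; finiteness follows from~\eqref{eqFinVar}.

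The main technical obstacle is case~(b), where $\widetilde H_n\sim n^{\vartheta_{\widetilde H}}$ grows polynomially and simultaneously inflates the tail bias $\sum_{|j|>\widetilde H_n} C_{(j)} q_j$ controlled by Lemma~\ref{Lemmabiasterms}(b*) and the truncated Fourier integral $\int_{-h_n^{-1}}^{h_n^{-1}}|\FT f_n|$ entering \eqref{assumpggenCLT} via Lemma~\ref{Lemmaglambdagnu}(b). The coupled constraints $\vartheta_{\widetilde H}\in[1/(2\beta),1/2)$, $\vartheta_h<(1-2\vartheta_{\widetilde H})/4$, $\vartheta_\varepsilon>1/\alpha$ and the moment threshold $\alpha>8\beta/(\beta-1)$ are precisely tuned so that both lemmas can be applied at once — indeed the last inequality is exactly what is required for the parameter window $2/\alpha<2\vartheta_\varepsilon\le\vartheta_h<(1-2\vartheta_{\widetilde H})/4$ to be non-empty — and verifying this compatibility is the only non-routine step of the argument.
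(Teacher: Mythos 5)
Your proposal is correct and follows essentially the same route as the paper: the same decomposition of $\widehat\varUpsilon_n$ into the stochastic term $\sqrt{n}\,{\Deltaup}^{-1}\int f_{t,n}\,\FII{\Log(\varphi_n/\varphi)\FT K_{h_n}}$ plus the aggregate bias $\sqrt{n}\,B_n$ of \eqref{eqbias}, the same appeal to Lemma \ref{Lemmabiasterms} and to Theorem \ref{ThmGenericCLT} via Lemma \ref{Lemmaglambdagnu}, and the same final step replacing $l_t$ by $f^{(N)}_t$ through \eqref{LimEqual} after observing that the limit function vanishes at the origin. Your explicit verification that the parameter constraints in case (b) are mutually compatible (and that $\alpha>8\beta/(\beta-1)$ is exactly what makes the window non-empty) is a correct reading of why those hypotheses are stated as they are.
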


\begin{proof}
	Notice that the sum of distinguished logarithms equals the distinguished logarithm of the product. Then, in view of the expressions of the estimators $\hat{\lambda}_n$, $\hat{\gamma}_n$, $\hat q_{j,n}$ and $\widehat{N}_n(t)$ in Section \ref{SecSettingEstimators} and the expressions for $B^{(\lambda)}_n, B^{(\gamma)}_n, B^{(q_j)}_n$ and $B^{(N)}_{t,n}$ in Lemma \ref{Lemmabiasterms} we can write
	\begin{equation} \label{eqLinCombEstimators}
		\widehat \varUpsilon_n= \sqrt{n} \, \frac{1}{{\Deltaup}} \int_{\R} f_{t,n}(x)\, \FII{\Log \frac{\varphi_n}{\varphi} \FT K_h} (x) \, dx + \sqrt{n} \, B_n,
	\end{equation}
	where $f_{t,n}$ and $B_n$ are defined in \eqref{defg} and \eqref{eqbias}, respectively. Note that all the assumptions of Lemma \ref{Lemmabiasterms} are trivially satisfied by those assumed here as the latter include the former. Therefore the second summand in the last display is negligible as $n\to \infty$. To find the asymptotic distribution of the first summand we use Theorem \ref{ThmGenericCLT} applied to $f_{t,n}$ as defined in \eqref{defg}. We can do this in view of Lemma \ref{Lemmaglambdagnu} because its assumptions are trivially satisfied by those assumed here. Then, using the last conclusion of the lemma, it follows that
	\begin{equation*}
		\widehat \varUpsilon_n \to^d N(0,\tilde{\sigma}^2),
	\end{equation*}
	where $\tilde{\sigma}^2$ satisfies the same expression as $\sigma^2$ after \eqref{lincombCLT} for 
	\begin{equation*}
		\tilde l:= C_{(\lambda)} f^{(\lambda)} + \sum_{j\in \Z \setminus\{0\}} C_{(j)} f^{(q_j)} + \sum_{t\in \mathcal{T}} C_t \Big( f^{(N)}_{t} - \frac{1}{2} \1_{\{t\}} \1_{\R \setminus   \Z} (t) \Big)
	\end{equation*}
	in place of $l$. Since $\tilde l(0)=0$ the variance $\tilde{\sigma}^2$ has the same expression as \eqref{LimCov} when $\tilde{g}_t^{1)}=\tilde{g}_t^{2)}= {\Deltaup}^{-1} \tilde l$. Furthermore, the third summand in $\tilde l$ agrees with the third summand in $l$ up to a zero Lebesgue-measure set disjoint from $  \Z$. Hence, the last claim in Lemma \ref{Lemmalimsints} guarantees $\tilde{\sigma}^2=\sigma^2$ and the finiteness of this quantity follows by the conclusions of Theorem \ref{ThmGenericCLT}. 
\end{proof}

\subsection{Proof of Propositions \ref{Proplambdamu} and \ref{Proppsqs}}\label{SecProofProps} 

Proposition \ref{Proplambdamu} follows immediately from Theorem \ref{ThmCLTEstimators} by taking $\mathcal{T}=\emptyset$, $C_{(j)}=0$ for all $j\in \Z\setminus\{0\}$ and $C_{(\lambda)}=1, C_{(\gamma)}=0$ or $C_{(\lambda)}=0, C_{(\gamma)}=1$ when estimating $\lambda$ or $\gamma$, respectively. The conclusion of Proposition \ref{Proppsqs} on estimating $q_j$, $j\in \Z \setminus\{0\}$, follows analogously. To show the conclusion on $p_j$ we write
\begin{equation*}
	\sqrt{n} \, \big( \hat p_{j,n} - p_j  \big) = \hat \lambda_n^{-1} \,  \sqrt{n} \, \Big( \big(\hat q_{j,n} - q_j \big) + p_j \big(\lambda - \hat \lambda_n \big) \Big).
\end{equation*}
In view of Theorem \ref{ThmCLTEstimators} with $C_{(\lambda)}=-p_j, C_{(\gamma)}= C_l=0$ for all $l\in \Z \setminus\{0,j\}$, $\mathcal{T}=\emptyset$ and $C_{(j)}=1$, the quantity by which $\hat \lambda_n^{-1}$ is multiplied on the right hand side converges to $N\big(0, \lambda^2\sigma_{p_j}^2\big)$. Since $\hat \lambda_n$ converges to $\lambda$ (constant) by Theorem \ref{ThmCLTEstimators}, the conclusion follows by Slutsky's lemma.


\subsection{Proof of Theorem \ref{ThmNF}} \label{SecProofNF} 

In view of the expressions for $\widehat{F}_n(t)$ and $F(t)$ in Section \ref{SecSettingEstimators}, for any $t\in \R$ we can write
\begin{align*}
	\sqrt{n} \, \big(  \widehat{F}_n(t) - F(t)\big) = \hat \lambda_n^{-1} \, \sqrt{n} \, \Big( \big(\widehat{N}_n(t) - N(t) \big) + F(t) \big( \lambda - \hat \lambda_n \big) \Big).
\end{align*}
Therefore, if we show that the quantity by which $\hat \lambda_n^{-1}$ is multiplied converges in distribution in $\ell^{\infty}(\R)$ to $\lambda \mathbb{G}^{F}$ and that this limit is tight, the result for $F$ follows by Slutsky's lemma (cf. Example 1.4.7 in \cite{AvdVW96}) because $\hat \lambda_n\to \lambda$ (constant) in distribution in $\R$ in view of Proposition \ref{Proplambdamu}. To show result for $N$, we need to work with the first summand on the right hand side. Consequently, we can unify the proof of the two results by considering the quantity
\begin{equation*}
	\widehat{G}_n(t) := \big(\widehat{N}_n(t) - N(t) \big) + \varsigma F(t) \big( \lambda - \hat \lambda_n \big), \quad t\in \R,
\end{equation*}
where $\varsigma=0$ or $1$ depending on whether we are estimating $N$ or $F$, respectively. With this in mind we define $f^{(G)}_{t,n}:=f^{(N)}_{t,n}- \varsigma F(t) f^{(\lambda)}_n, t\in \R$, where $f^{(\lambda)}_n$ is introduced in Section \ref{SecSettingEstimators}.

Note that the assumptions in the first part of Lemma \ref{Lemmaglambdagnu} are included in those assumed here. Therefore, the assumptions of Theorem \ref{ThmGenericCLT} are satisfied, and in view of the expression of the estimator $\widehat{N}_n(t)$ in Section \ref{SecSettingEstimators}, the expressions for $B^{(N)}_{t,n}$ and $B^{(\lambda)}_{n}$ in Lemma \ref{Lemmabiasterms}, and the properties of the distinguished logarithm, we can write
\begin{align*}
	\sqrt{n} \, \widehat{G}_n(t)  & = \sqrt{n} \, \frac{1}{{\Deltaup}} \int_{\R} f^{(G)}_{t,n}(x) \, \FII{\Log \frac{\varphi_n}{\varphi} \FT K_h} (x) \, dx + \sqrt{n} \, \big(B^{(N)}_{t,n} - \varsigma F(t) B^{(\lambda)}_n\big)\\
	&=^{\Pr}  \sqrt{n} \, \frac{1}{{\Deltaup}} \int_{\R} f^{(G)}_{t,n} \ast K_{h_n} \ast \FII{\varphi^{-1}(-\cdot)}(x) \big( P_n -P \big)(dx) \\
	& \qquad  + \sqrt{n} \, \frac{1}{{\Deltaup}} \int_{\R} \FT f^{(G)}_{t,n} (-u) R_n(u) \FT K_{h_n}(u) du + \sqrt{n} \, \big(B^{(N)}_{t,n} - \varsigma F(t) B^{(\lambda)}_n\big),
\end{align*}
where $R_n$ is as in Theorem \ref{ThmLinearisation}. We are showing a central limit theorem under the uniform norm, so we now argue that the supremum over $t\in \R$ of the last line vanishes as $n\to \infty$ in the same sets of $\Pr$-probability approaching $1$ in which the last equality holds. By Lemma \ref{Lemmabiasterms} we have $\sup_{t\in \R}|B^{(N)}_{t,n}| = o\big(n^{-1/2}\big)$ and $|B^{(\lambda)}_n| = o\big(n^{-1/2}\big)$, so the last summand vanishes as $n\to \infty$ in view of $\sup_{t\in \R}F(t)\leq 1$. Due to $\supp(\FT K_{h}) \subseteq [-h^{-1},h^{-1}]$, the supremum of the first summand in the last line is bounded by
\begin{equation*}
	\norm{K}_1 \, {\Deltaup}^{-1} \, \sqrt{n}   \sup_{|u|\leq h_n^{-1}} |R_n(u)| \, \sup_{t\in \R} \, \int_{-h_n^{-1}}^{h_n^{-1}} \big(|\FT f^{(N)}_{t,n} (u)| + \varsigma F(t) |\FT f^{(\lambda)}_n (u)| \big)  du = o_{\Pr} \big( 1\big),
\end{equation*}
where the equality follows from Theorem \ref{ThmLinearisation} and Lemma \ref{Lemmaglambdagnu}. Consequently we have to show the corresponding functional central limit theorem for the linear term 
\begin{equation*}
	\sqrt{n}\,  (P_n -P)\psi_{t,n}:=\sqrt{n} \int_{\R} \psi_{t,n} (x) (P_n - P)(dx),
\end{equation*}
where 
\begin{equation*}
	\psi_{t,n}:= {\Deltaup}^{-1} f^{(G)}_{t,n} \ast K_{h_n} \ast \FII{\varphi^{-1}(-\cdot)}.
\end{equation*}
This type of result follows by showing a central limit theorem (for triangular arrays in this case) for the finite-dimensional distributions and tightness of the limiting process. Conveniently, Theorem 2.11.23 in \cite{AvdVW96} explicitly gives assumptions under which these follow and using it adds clarity to the proofs. Thus, we recall it adapted to our needs and refer the reader to \cite{AvdVW96} for the concepts in it such as envelope functions, outer measures, bracketing numbers and entropies, etc.

\begin{thm} \label{ThmTightness}
	For each $n$, let $\varPsi_n:= \{\psi_{t,n}:t\in \R \}$ be a class of measurable functions indexed by a totally bounded semimetric space $(\R, \rho)$. Given envelope functions $\Psi_n$ assume that
	\begin{align*}
		P^* \Psi_n^2 = O(1), \qquad P^* \Psi_n^2 \mathds{1}_{\{ \Psi_n > \kappa \sqrt{n}  \}} \to 0 \qquad \mbox{for every } \kappa>0,
	\end{align*}
	\begin{align*}
		\sup_{\mathclap{\rho(s,t)<\delta_n}} \, \, P(\psi_{s,n} - \psi_{t,n})^2 \to 0 \, \, \,  \mbox{ and } \, \,  \int_0^{\delta_n} \sqrt{ \log N_{[\, ]} (\epsilon \norm{\Psi_n}_{P,2}, \varPsi_n, L_2(P))} \, d\epsilon \to 0  \quad \mbox{for every } \delta_n \downarrow 0, 
	\end{align*}
	where $\norm{\psi}_{L_2(P)}=(\int_{\R} |\psi|^2 P)^{1/2}$. Then the sequence of stochastic processes
	\begin{equation*}
		\left\lbrace  \sqrt{n}\,  (P_n -P) \,\psi_{t,n}: t \in \R \right\rbrace 
	\end{equation*}
	is asymptotically tight in $l^{\infty}(\R)$ and converges in distribution to a tight Gaussian process provided the sequence of covariance functions $P \psi_{s,n} \psi_{t,n} - P \psi_{s,n} P \psi_{t,n}$ converges pointwise on $\R\times \R$.
\end{thm}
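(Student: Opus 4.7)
The plan is to establish the two standard ingredients for weak convergence in $\ell^\infty(\R)$ of the sequence of empirical processes $\mathbb{G}_n \psi_{t,n} := \sqrt{n}(P_n - P)\psi_{t,n}$: (i) convergence of all finite-dimensional marginals to a Gaussian limit, and (ii) asymptotic $\rho$-equicontinuity of the sample paths. Combined with the total boundedness of $(\R, \rho)$, these give asymptotic tightness in $\ell^\infty(\R)$; weak convergence to a tight Gaussian Borel process then follows from the general weak-convergence criterion (Theorem 1.5.4 in \cite{AvdVW96}), with the limit uniquely pinned down by the assumed pointwise convergence of the covariance function.

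For the finite-dimensional distributions, I would apply the Lindeberg--Feller central limit theorem for triangular arrays to the independent centred vectors $(\psi_{t_1,n}(Z_k) - P\psi_{t_1,n}, \ldots, \psi_{t_m,n}(Z_k) - P\psi_{t_m,n})/\sqrt{n}$, $k=1, \ldots, n$, for any fixed $t_1, \ldots, t_m \in \R$. The second-moment bound $P^*\Psi_n^2 = O(1)$ combined with $|\psi_{t,n}|\le \Psi_n$ controls the variances uniformly in $n$, while $P^*\Psi_n^2 \1_{\{\Psi_n > \kappa \sqrt n\}} \to 0$ for every $\kappa>0$ is precisely the Lindeberg condition after rescaling. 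The assumed pointwise convergence of $P\psi_{s,n}\psi_{t,n} - P\psi_{s,n}P\psi_{t,n}$ then identifies the limiting covariance, and a Cram\'er--Wold argument delivers joint normality.

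The substantive work is the asymptotic equicontinuity, namely that for every $\eta>0$,
\begin{equation*}
\lim_{\delta \downarrow 0} \limsup_{n\to\infty} \Pr^*\!\Big(\sup_{\rho(s,t) < \delta} |\mathbb{G}_n(\psi_{s,n} - \psi_{t,n})| > \eta \Big) = 0.
\end{equation*}
My approach is a bracketing-chaining argument. After a standard symmetrisation by Rademacher multipliers $(\varepsilon_k)_{k\le n}$, it suffices to control $\E^*\sup_{\rho(s,t)<\delta_n} | n^{-1/2} \sum_k \varepsilon_k (\psi_{s,n} - \psi_{t,n})(Z_k)|$ for any $\delta_n \downarrow 0$. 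For each resolution $\epsilon_j := 2^{-j}\delta_n$, I would pick a minimal set of $L_2(P)$-brackets $[\ell_i^{(j)}, u_i^{(j)}]$ of size at most $\epsilon_j \|\Psi_n\|_{P,2}$ covering $\varPsi_n$, giving for each $t$ a chain of approximants $\psi^{(j)}_{t,n}$. Each successive difference $\psi^{(j)}_{t,n} - \psi^{(j-1)}_{t,n}$ has bounded $L_2(P)$-norm of order $\epsilon_{j-1}\|\Psi_n\|_{P,2}$ and, after truncating at a level of order $\kappa_n\sqrt{n}$ chosen to shrink to zero, Bernstein's inequality gives a subgaussian-type tail bound for the supremum over the finite net at level $j$.

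The key obstacle is making this chaining uniform in $n$ despite the envelopes $\Psi_n$ being possibly unbounded, which is why the Lindeberg-type hypothesis $P^*\Psi_n^2 \1_{\{\Psi_n>\kappa\sqrt n\}}\to 0$ is essential: it lets me discard the truncation tails with an $L_2$ estimate that vanishes in the limit, so that only the bounded pieces enter Bernstein's inequality. Summing the per-level tail bounds along the chain, together with a union bound over the $N_{[\,]}(\epsilon_j \|\Psi_n\|_{P,2}, \varPsi_n, L_2(P))$ bracketing pairs, produces a bound proportional to the bracketing integral $\int_0^{\delta_n} \sqrt{\log N_{[\,]}(\epsilon\|\Psi_n\|_{P,2}, \varPsi_n, L_2(P))}\, d\epsilon$, which goes to $0$ by hypothesis. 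The initial link of the chain is controlled using $\sup_{\rho(s,t)<\delta_n}P(\psi_{s,n}-\psi_{t,n})^2 \to 0$, which ensures the coarsest approximants differ negligibly in $L_2(P)$. Combining this equicontinuity with the finite-dimensional CLT and invoking Prohorov's theorem yields the claimed weak convergence to a tight Gaussian process in $\ell^\infty(\R)$.
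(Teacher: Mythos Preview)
The paper does not prove this theorem at all: it is simply quoted as Theorem 2.11.23 from \cite{AvdVW96}, adapted to the present notation, and the reader is referred there. So there is no ``paper's own proof'' to compare against beyond the citation.

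Your sketch is a reasonable outline of how that theorem is actually established in \cite{AvdVW96}: finite-dimensional convergence via Lindeberg--Feller (using the envelope conditions), plus asymptotic equicontinuity via a bracketing chaining bound that converts the entropy integral into control of the modulus of continuity. One small inaccuracy worth flagging: the standard bracketing argument (as in \cite{AvdVW96}, Lemma 2.11.6 and the proof of Theorem 2.11.23) does \emph{not} pass through Rademacher symmetrisation; it chains directly on the brackets using Bernstein's inequality, because brackets give pointwise control $\ell_i \le \psi \le u_i$ that symmetrisation would discard. Symmetrisation is the natural companion of uniform covering numbers, not bracketing numbers. Your truncation-plus-Bernstein idea is correct in spirit, but you should apply it to the bracket differences $u_i - \ell_i$ and the chaining links directly, rather than to a symmetrised process. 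With that adjustment, your outline matches the textbook proof.
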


We first compute the pointwise limit of the covariance functions. Due to the limiting distribution being a tight (centred) Gaussian process, the former limit uniquely identifies the process. This allows us to identify the so-called intrinsic covariance semimetric of the Gaussian process and, as it is customary, we take $\rho$ equal to this semimetric (see the second half of Chapter 1.5 and Chapter 2.1.2 of \cite{AvdVW96} for a discussion on this and other choices). We then check the remaining assumptions of the theorem in order of appearance. 
\mbox{ }\\

\textit{Convergence of the covariance functions.} Note that the assumptions here include those of Lemma \ref{Lemmalimsints} and of the first part of Lemma \ref{Lemmaglambdagnu}. For any $s,t \in \R$ fixed, $P \psi_{s,n} \psi_{t,n}$ has the form of \eqref{LimCov} when $g^{1)}={\Deltaup}^{-1} f_{s,n}^{(G)}$ and $g^{2)}={\Deltaup}^{-1} f^{(G)}_{t,n}$ and we can use the lemmata to conclude that
\begin{align*}
	\lim_{n\to \infty} P \psi_{s,n} \psi_{t,n} = \frac{1}{{\Deltaup}^2} \int_{\R} & \big(l_s-\varsigma F(s) f^{(\lambda)} \big) \ast \FII{\varphi^{-1}(-\cdot)} \, (x) \\
	& \times \big(l_t-\varsigma F(t) f^{(\lambda)} \big) \ast \FII{\varphi^{-1}(-\cdot)} \, (x) \, P(dx)
\end{align*}
and that this is finite, where $l_t$ is defined in \eqref{Limftn}. Furthermore, $l_t-\varsigma F(t)f^{(\lambda)}$ agrees with $f_t^{(G)}:=f^{(N)}_{t}-\varsigma F(t)f^{(\lambda)}$ up to a zero Lebesgue-measure set disjoint from $  \Z$ for any $t\in \R$. Hence, the last claim in Lemma \ref{Lemmalimsints} guarantees the last display equals
\begin{align*}
	\frac{1}{{\Deltaup}^2} \int_{\R} & f_s^{(G)}\ast \FII{\varphi^{-1}(-\cdot)} \, (x) \, f^{(G)}_{t} \ast \FII{\varphi^{-1}(-\cdot)} \, (x) \, P(dx).
\end{align*}
Conclusion \eqref{LimMean} and the fact that $\big(l_t-\varsigma F(t)f^{(\lambda)}\big)(0)=0$ for any $t\in \R$ justify that $P \psi_{t,n}=0$ for all $t\in \R$ and therefore the sequence of covariance functions $P \psi_{s,n} \psi_{t,n} - P \psi_{s,n} P \psi_{t,n}$ converges pointwise to $(1-\varsigma )\Sigma_{s,t}^{N} + \varsigma \lambda^2 \Sigma_{s,t}^{F}$.
\mbox{ }\\

\textit{ Total boundedness of $\R$ under the internal covariance semimetric $\rho$.}  In view of the limiting covariance we just computed we take 
\begin{equation*}
	\rho(s,t)= {\Deltaup}^{-1} \left(P\Big(f_s^{(G)} \ast \FII{\varphi^{-1}(-\cdot)} - f^{(G)}_{t} \ast \FII{\varphi^{-1}(-\cdot)}\Big)^2\right)^{1/2}, \qquad s,t\in \R.
\end{equation*}
To show that $\R$ is totally bounded under this semimetric we bound this expression by another semimetric under which $\R$ is totally bounded. Due to $\FII{\varphi^{-1}(-\cdot)} $ being a finite measure and $\sup_{t\in \R}\sup_{x\in \R}|f^{(G)}_{t}(x)|\leq 2$, Minkowski's inequality for integrals guarantees that
\begin{align*}
	\rho(s,t)^2 & \lesssim  {\Deltaup}^{-2} P \, \left|\left(f_s^{(G)} -f_t^{(G)}\right)\ast \FII{\varphi^{-1}(-\cdot)} \right|  \leq  {\Deltaup}^{-2} P \left|f_s^{(G)} -f_t^{(G)}\right|\ast \left|\FII{\varphi^{-1}(-\cdot)} \right|,
\end{align*}
where the last inequality follows from Jordan's decomposition of finite measures and $|\FII{\varphi^{-1}(-\cdot)} |$ is the positive measure given by the sum of the positive and negative parts of $\FII{\varphi^{-1}(-\cdot)}$ in this decomposition. Finally, note that
\begin{align*}
	P  \left|f_s^{(G)} -f^{(G)}_{t}\right|\ast \left|\FII{\varphi^{-1}(-\cdot)} \right| &=  \left|f_s^{(G)} -f^{(G)}_{t}\right|\ast \left|\FII{\varphi^{-1}(-\cdot)} \right| \ast \bar P \, (0) \\
	& \leq \left|f_s^{(N)} -f^{(N)}_{t}\right|\ast \left|\FII{\varphi^{-1}(-\cdot)} \right| \ast \bar P \, (0) \\
	& \quad +\varsigma \left|F(s)-F(t)\right| f^{(\lambda)} \ast \left|\FII{\varphi^{-1}(-\cdot)} \right| \ast \bar P \, (0) \\
	& =  \mu_0\big( (\min\{s,t\}, \max\{s,t\}]\big),
\end{align*}
where $\bar P (A) = P(-A)$ and $\bar \mu (A) = \mu(-A)$ for any Borel $A\subseteq \R$, $\mu:= \left|\FII{\varphi^{-1}(-\cdot)} \right| \ast \bar P$ and $\mu_0:= \bar \mu-\bar \mu(\{0\})\delta_0+\varsigma \nu$. The conclusion then follows because $ \mu_0$ is a finite measure on $\R$.
\mbox{ }\\

\textit{ Conditions on the envelope functions of $\varPsi_n:= \{ \psi_{t,n}: t\in \R \}$.}  Note that $$\sup_n\sup_{t\in \R} \sup_{x\in\R}|f^{(G)}_{t,n}(x)|\leq 2, \quad \norm{K_h}_1=\norm{K}_1<\infty$$ and that $\FII{\varphi^{-1}(-\cdot)}$ is a finite measure. Using Minkowski's inequality for integrals we have that $ \sup_n \sup_{t\in \R} \sup_{x\in \R} |\psi_{t,n} (x)| \leq \Psi$ for some $\Psi\in (0, \infty)$ and we can take $\Psi_n=\Psi$ for all $n$. The two conditions on the envelope functions then follow immediately.
\mbox{ }\\

\textit{ Control of $P(\psi_{s,n}-\psi_{t,n})^2$.} In the following we repeatedly use the fact that if $f, g$ are bounded functions and $\mu$ is a finite positive measure then
\begin{equation}\label{eqSquaresCS}
	\mu (f+g)^2:= \int_{\R} (f(x)+g(x))^{2} \mu (dx) \leq 2 \left( \mu f^2+\mu g^2\right),
\end{equation} 
and hence to control the left hand side we can control $\mu f^2$ and $\mu g^2$ separately. Therefore, writing 
\begin{align*}
	\psi_{s,n}-\psi_{t,n} =  & \, \big( \psi_{s,n}- {\Deltaup}^{-1} f_s^{(N)}\ast \FII{\varphi^{-1}(-\cdot)} \big)- \big(\psi_{t,n} - {\Deltaup}^{-1} f^{(N)}_{t}\ast \FII{\varphi^{-1}(-\cdot)}\big) \\
	& - {\Deltaup}^{-1} \varsigma \big( F(s)- F(t) \big) \big( f^{(\lambda)}_n\ast K_{h_n} -f^{(\lambda)}\big) \ast  \FII{\varphi^{-1}(-\cdot)} \\
	& + {\Deltaup}^{-1} (f_s^{(G)}-f^{(G)}_{t})\ast \FII{\varphi^{-1}(-\cdot)}
\end{align*}
and noting that $\sup_{s,t \in \R} \big( F(s)- F(t) \big)^{2} \lesssim 1$, to control $P(\psi_{s,n}-\psi_{t,n})^2$ we only need to control
\begin{equation*}
	P \big(\psi_{t,n} - {\Deltaup}^{-1} f^{(N)}_{t}\ast \FII{\varphi^{-1}(-\cdot)} \big)^2, \quad P \Big( \big( f^{(\lambda)}_n\ast K_{h_n} -f^{(\lambda)}\big) \ast  \FII{\varphi^{-1}(-\cdot)} \Big)^2 \quad \mbox{ and } \quad\rho(s,t).
\end{equation*}
We analyse them in reverse order. The behaviour of the last term when $\rho(s,t)<\delta_n \downarrow 0$ is trivial. By Lemma \ref{Lemmaglambdagnu} the function $f^{(\lambda)}_n\ast K_{h_n} -f^{(\lambda)}$ converges to $0$ pointwise as $n\to \infty$. By the boundedness of this function, the finiteness of $\FII{\varphi^{-1}(-\cdot)} $ and Minkowski's inequality for integrals we can argue as when proving Lemma \ref{Lemmalimsints}, and dominated convergence guarantees the limit as $n\to \infty$ of the second quantity in the last display equals $0$ regardless of the sequence $\delta_n$. Consequently, if we show that the supremum over $t\in\R$ of the first quantity vanishes as $n\to \infty$, then the first term in the second display of Theorem \ref{ThmTightness} also vanishes in the limit. Note that in view of the expressions for $f^{(N)}_{t,n}$ and $\tilde{f}^{(N)}_{t,n}$ in \eqref{defftn} and \eqref{tildeftnandutnbias}, respectively, we can write
\begin{align*}
	\psi_{t,n} - {\Deltaup}^{-1} f^{(N)}_{t}\ast \FII{\varphi^{-1}(-\cdot)} = &\,{\Deltaup}^{-1} ( \tilde{f}^{(N)}_{t,n}\ast K_{h_n}- f^{(N)}_{t})\ast \FII{\varphi^{-1}(-\cdot)} \\
	&- {\Deltaup}^{-1} \1_{[-H_n, H_n]^{C}} \tilde{f}^{(N)}_{t,n} \ast K_{h_n} \ast \FII{\varphi^{-1}(-\cdot)}.
\end{align*}
Then, using \eqref{eqSquaresCS} and \eqref{LimglambdaHn}, together with the finiteness of $\FII{\varphi^{-1}(-\cdot)}$ and $P$, we only need to analyse the supremum over $t\in\R$ of
\begin{align}\label{eqL2Ptildeftnmlt}
	P \Big(( \tilde{f}^{(N)}_{t,n}\ast K_{h_n} \!-\! f^{(N)}_{t})\ast & \FII{\varphi^{-1}(-\cdot)} \! \Big)^2 \notag \\
	=  & P_d \Big(( \tilde{f}^{(N)}_{t,n}\ast K_{h_n} \!-\! f^{(N)}_{t})\ast \varPhi_d + ( \tilde{f}^{(N)}_{t,n}\ast K_{h_n} \!-\! f^{(N)}_{t})\ast \varPhi_{ac} \Big)^2  \\
	&+ \! P_{ac}\Big( ( \tilde{f}^{(N)}_{t,n}\ast K_{h_n} \!-\! f^{(N)}_{t})\ast \varPhi_d + ( \tilde{f}^{(N)}_{t,n}\ast K_{h_n} \!-\! f^{(N)}_{t})\ast \varPhi_{ac}\Big)^2 \! \notag,
\end{align}
where $P_d$ and $\varPhi_d$ are discrete and $P_{ac}$ and $\varPhi_{ac}$ are absolutely continuous finite measures with respect to Lebesgue's measure, and, by the decomposition of $\nu$ in \eqref{LebDecNu} and Lemma 27.1 in \cite{S99}, 
\begin{equation*}
	P_d= \sum_{k=0} \nu_d^{\ast k} \frac{{\Deltaup}^k}{k!}, \quad P_{ac}=P-P_d, \quad \varPhi_d= \sum_{k=0} \bar \nu_d^{\ast k} \frac{(-{\Deltaup})^k}{k!} \quad \mbox{ and } \quad \varPhi_{ac}=\FII{\varphi^{-1}(-\cdot)}- \varPhi_d.
\end{equation*}
By \eqref{eqSquaresCS} we then have to control the four individual terms arising from \eqref{eqL2Ptildeftnmlt}. In view of Assumption \ref{enum:th:ass1a} we notice that $P_d$ may have atoms only at $  \Z$ and hence for the first term we need to analyse $( \tilde{f}^{(N)}_{t,n}\ast K_{h_n}- f^{(N)}_{t})\ast \varPhi_d\,( j)$ for any $j\in \Z$. For the second and last quantities we analyse $( \tilde{f}^{(N)}_{t,n}\ast K_{h_n}- f^{(N)}_{t})\ast \varPhi_{ac}\,(x)$ for any $x\in \R$. For the third quantity we have that, by Fubini's theorem and Jensen's inequality,
\begin{align*}
	P_{ac} \Big(( \tilde{f}^{(N)}_{t,n}\ast K_{h_n}- f^{(N)}_{t})\ast \varPhi_d \Big)^2  \!&= \! \int_{\R^2}  \int_{\R}  \! \big( \tilde{f}^{(N)}_{t,n}\ast K_{h_n}- f^{(N)}_{t}\big)(x-y_1) \\
	& \qquad \qquad \times \big( \tilde{f}^{(N)}_{t,n}\ast K_{h_n}- f^{(N)}_{t}\big)(x-y_2) \\
	& \qquad \qquad \quad \times P_{ac}(dx) \, \varPhi_d (dy_1) \, \varPhi_{d}(dy_2) \\
	& \leq \left( \bar{\varPhi}_d \big(( \tilde{f}^{(N)}_{t,n}\ast K_{h_n}- f^{(N)}_{t})^2 \ast \bar{P}_{ac} \big)^{1/2}\right)^2,
\end{align*}
where as usual $\bar{\varPhi}_d(A)=\bar{\varPhi}_d(-A) $ and $\bar{P}_{ac}(A)=\bar{P}_{ac}(-A)$ for any Borel set $A \subseteq \R$. Since $\varPhi_d$ may have atoms only at $  \Z$ by Assumption \ref{enum:th:ass1a}, to control this term we therefore require to analyse $( \tilde{f}^{(N)}_{t,n}\ast K_{h_n}- f^{(N)}_{t})^2 \ast \bar{P}_{ac}\, ( j)$ for all $j\in \Z$. However, noting that $\bar{P}_{ac}=\bar{\nu}_{ac} \ast \mu_1$ and $\varPhi_{ac}=\bar{\nu}_{ac} \ast \mu_2$ for some finite measures $\mu_1$ and $\mu_2$, to control all the terms in \eqref{eqL2Ptildeftnmlt} but the first we only have to analyse $( \tilde{f}^{(N)}_{t,n}\ast K_{h_n}- f^{(N)}_{t})^k\ast \bar{\nu}_{ac}\,(x)$ for all $x\in \R$ and $k=1,2$. The rest of the section is then devoted to showing that for some $\eta>2$ and $\alpha>0$
\begin{equation*}
	\sup_{t\in \R}\, \sup_{j\in \Z} |( \tilde{f}^{(N)}_{t,n}\ast K_{h_n}- f^{(N)}_{t})\ast \varPhi_d\,( j)| \lesssim \Big( \frac{h_n}{\varepsilon_n}\Big)^{\eta-1}  
\end{equation*}
and
\begin{equation*}
	\sup_{t\in \R} \,\sup_{x\in \R}|( \tilde{f}^{(N)}_{t,n}\ast K_{h_n}- f^{(N)}_{t})^k\ast \bar{\nu}_{ac}\,(x)| \lesssim (\log(\left| \varepsilon_n \right|^{-1}))^{-\alpha} + \Big(\frac{h_n}{\varepsilon_n}\Big)^{\eta-1},
\end{equation*}
from which it easily follows that the supremum over $t\in \R$ of \eqref{eqL2Ptildeftnmlt} vanishes as $n\to \infty$ because $h_n, \varepsilon_n\to 0$ with $h_n=o(\varepsilon_n)$. To bound the first quantity note that using the symmetry of $K$ we have that for any $j\in \Z$ and $t\in \R$
\begin{equation*}
	( \tilde{f}^{(N)}_{t,n}\ast K_{h_n}- f^{(N)}_{t})\ast \varPhi_d\,( j) = \sum_{l\in \Z} \bar \varPhi_d(\{ (l-j)\})  \left( \int_{\R} \tilde{f}^{(N)}_{t,n} ( l +x) K_{h_n}(x)\,dx - f^{(N)}_{t}( l)\right).
\end{equation*}
Without loss of generality assume $\varepsilon_n<1/2$ and recall the definition of $u(t,n)$ in \eqref{tildeftnandutnbias}. Using that $\int_{\R}K=1$ the quantity in brackets in the previous display can then be written as
\begin{align}\label{eqPerturbationsd}
	\int_{\R} & \left( \1_{(-\infty, u]} - \1_{(-\varepsilon_n, \varepsilon_n) } \1_{[0,\infty)} (t) \right) ( l + x) K_{h_n}(x) \, dx - \1_{(-\infty, t]} \1_{\R\setminus\{0\}} ( l) \notag \\
	&= \1_{(-\infty, t]} \1_{\R\setminus\{0\}} ( l) \int_{ (u- l)/h_n}^{\infty} K(x)\,dx+ \left( 1- \1_{(-\infty, t]} \1_{\R\setminus\{0\}} ( l)\right) \int^{(u- l)/h_n}_{-\infty}K(x)\,dx \notag \\
	& \quad - \1_{[0,\infty)} (t) \int_{(-\varepsilon_n- l)/h_n}^{(\varepsilon_n- l)/h_n}  K(x)\,dx.
\end{align}
If $t< 0$ or $t\geq 0$ we have that $u(t,n)\leq -\varepsilon_n$ or $u(t,n) \geq \varepsilon_n$, respectively, so when $l=0$ the absolute value of this display is bounded above by 
\begin{equation*}
	\int_{-\infty}^{-\varepsilon_n/h_n}  |K|(x)\,dx + \int^{\infty}_{\varepsilon_n/h_n}  |K|(x)\,dx \lesssim \Big( \frac{h_n}{\varepsilon_n}\Big)^{\eta-1}
\end{equation*}
for some $\eta>2$ using the decay of $|K|$ in \eqref{conditionsK}. When $l\neq 0 $ the absolute value of the third summand in \eqref{eqPerturbationsd} is also bounded by the last display. The other two summands are also bounded by the last display when $l\neq 0$ because if $ l \leq t$ then $u- l\geq  \varepsilon_n$ and if $ l > t$ then $u- l\leq  \varepsilon_n$. Due to the last display not depending on $t,j$ or $l$ and the fact that $\bar{\varPhi}_d$ is a finite measure we therefore conclude that 
\begin{equation*}
	\sup_{t\in \R}\, \sup_{j\in \Z} |( \tilde{f}^{(N)}_{t,n}\ast K_{h_n}- f^{(N)}_{t})\ast \varPhi_d\,( j)| \lesssim \Big( \frac{h_n}{\varepsilon_n}\Big)^{\eta-1}.
\end{equation*}
To bound the other quantity we note that, using $\int_{\R}K=1$, the symmetry of $K$ and the positivity of $\nu_{ac}$, we have that for $k=1,2$ and any $t,x\in \R$
\begin{align*}
	\left| ( \tilde{f}^{(N)}_{t,n}\ast K_{h_n}- f^{(N)}_{t})^k\ast \bar{\nu}_{ac}\,(x) \right| &\leq  \int_{\R} \left| \int_{\R}\big(\tilde{f}^{(N)}_{t,n}(x+y+h_n z)-f^{(N)}_{t}(x+y)\big) K(z) dz\right|^k \nu_{ac}(y)  dy\\
	& \leq \int_{\R} \int_{\R} \left| \tilde{f}^{(N)}_{t,n}(x+y+h_n z)-f^{(N)}_{t}(x+y)\right| \nu_{ac}(y)  dy \, |K|(z) dz,
\end{align*}
where in the last inequality we have used Jensen's inequality when $k=2$, Fubini's theorem and the fact that $\tilde{f}^{(N)}_{t,n}$ and $f^{(N)}_{t}$ only take values $0,\pm 1$. Note that, because $\nu_{ac}$ is absolutely continuous, the truncation of $f^{(N)}_{t}$ at the origin can be ignored and, similarly to above,
\begin{align*}
	\tilde{f}^{(N)}_{t,n}(x+y+h_n z)-\1_{(-\infty, t]} (x+y) = & \1_{(-\infty, u-x-h_n z]} (y) - \1_{(-\infty, t-x]} (y) \\
	&- \1_{(-\varepsilon_n-x-h_n z, \varepsilon_n-x-h_n z)}(y) \1_{[0,\infty)}(t).
\end{align*}
Therefore, if Assumption \ref{enum:th:ass1b} is satisfied for some $\alpha>0$, we have that for $k=1,2$ and any $t,x\in \R$
\begin{align*}
	\left| ( \tilde{f}^{(N)}_{t,n}\ast K_{h_n}\!-\! f^{(N)}_{t})^k\ast \bar{\nu}_{ac}\,(x) \right| & \!\lesssim \!\int_{\R} \!\min\{(\log(\left| u-t-h_n z \right|^{-1}))^{-\alpha},1\} |K|(z) dz \!+\! (\log(\left| \varepsilon_n \right|^{-1}))^{-\alpha} \\
	& \lesssim \!(\log(\left| \varepsilon_n \right|^{-1}))^{-\alpha} + (h_n/\varepsilon_n)^{\eta-1},
\end{align*}
where in the last inequality we have used that $|u-t|\leq \varepsilon_n$ and the decay of $|K|$ assumed in \eqref{conditionsK}. Recall that $h_n\sim \exp(-n^{\vartheta_h})$ and $\varepsilon_n\sim \exp(-n^{\vartheta_\varepsilon})$ with $\vartheta_h \geq 2\vartheta_\varepsilon$ and $\vartheta_\varepsilon>1/(2\alpha)$. We conclude that the supremum over $x\in\R$ and $t\in\R$ of the left hand side is also bounded by the right hand side by noting that the constants hidden in the notation $\lesssim$ are independent of them in view of the independence of the notation $\lesssim$ in Assumption \ref{enum:th:ass1b}.
\mbox{ }\\

\textit{Checking the bracketing entropy condition.} To check the remaining condition of Theorem \ref{ThmTightness} we first recall that $\Psi_n=\Psi$ is independent of $n$. Second, we claim that the classes $\varPsi_n$ are all contained in a single ball in the space of bounded variation functions. Assuming this, the bracketing entropy in the theorem is bounded above by the bracketing entropy of this ball and, by Corollary 3.7.51 in \cite{GN15}, the latter is bounded above by $(\epsilon \Psi)^{-1}$. Therefore the bracketing entropy condition follows if we prove the claim. 

In view of \eqref{defftn}, the definition of $u(t,n)$ in \eqref{tildeftnandutnbias} that we recalled above and by properties of the convolution, the weak derivative of $\psi_{t,n}$ is given by 
\begin{align*}
	\psi_{t,n}'={\Deltaup}^{-1}\Big( & \big(\delta_{-H_n}-\delta_{-\varepsilon_n} + \delta_{\varepsilon_n} -\delta_{H_n}\big) \1_{(-\infty, u]} - \1_{[-H_n,H_n]\setminus(-\varepsilon_n, \varepsilon_n)} \delta_{u} \\
	&- \varsigma F(t) \big(\delta_{-H_n}-\delta_{-\varepsilon_n} + \delta_{\varepsilon_n} -\delta_{H_n}\big) \Big) \ast K_{h_n}\ast \FII{\varphi^{-1}(-\cdot)}.
\end{align*}
Thus, using Minkowski's inequality for integrals and that $\norm{K_h}_{1}=\norm{K}_1$, we have that
\begin{equation*}
	\norm{\psi_{t,n}'}_{TV} \leq 9 {\Deltaup}^{-1}\norm{K}_1 \norm{\FII{\varphi^{-1}(-\cdot)}}_{TV} <\infty
\end{equation*}
and the claim follows by noting that the upper bound does not depend on $t$ or $n$. 

\subsection{Proof of Theorem \ref{ThmMain}} \label{SecProofJointConv}


Prior to dealing with joint convergence of the infinite dimensional vectors in the statement of the theorem we justify that under the assumptions of part (b) and as $n\to \infty$
\begin{equation*}
	\sqrt{n} \, \big( \hat q_{n} - q \big) \to^d N\big(0, \sigma^2_{q} \big) \quad \mbox{ and } \quad \sqrt{n} \, \big( \hat p_{n} - p \big) \to^d N\big(0, \sigma^2_{p} \big),
\end{equation*}
where, recalling that $f^{(q)}:=\sum_{j\in\Z\setminus\{0\}} f^{(q_j)}$ and $f^{(p)}:=\sum_{j\in\Z\setminus\{0\}} f^{(p_j)}=\lambda^{-1}(f^{(q)} - pf^{(\lambda)})$,
\begin{align*}
	\sigma^2_{q}\! :=\! \frac{1}{{\Deltaup}^2 }\!\! \int_{\R}\!\! \Big(\! f^{(q)} \! \ast \! {\cal F}^{-1}\! \left[ \varphi^{-1}(-\cdot) \right] (x)\!\Big)^2\!\! P(dx) \, \, \mbox{ and } \, \,
	\sigma^2_{p}\! :=\! \frac{1}{{\Deltaup}^2}\!\! \int_{\R}\!\! \Big(\! f^{(p)}\! \ast\! {\cal F}^{-1}\! \left[ \varphi^{-1}(-\cdot) \right] (x)\!\Big)^2 \!\! P(dx). 
\end{align*}
The second set of assumptions of Theorem \ref{ThmCLTEstimators} are satisfied here and, in view of the expression for $\hat q_n$ in Section \ref{SecSettingEstimators}, the first result thus follows by taking $\mathcal{T}=\emptyset$, $C_{(\lambda)}=C_{(\gamma)}=0$ and $C_{(j)}=1$ for all $j\in \Z\setminus\{0\}$ in Theorem \ref{ThmCLTEstimators}. For the second result we write 
\begin{equation*}
	\sqrt{n} \, \big( \hat p_{n} - p \big) = \hat{\lambda}_n^{-1} \, \sqrt{n} \, \big( (q_n - q) + p \, (\lambda - \hat \lambda_n) \big).
\end{equation*}
Taking $\mathcal{T}=\emptyset$, $C_{(\lambda)}=-p$, $C_{(\gamma)}=0$ and $C_{(j)}=1$ for all $j\in \Z\setminus\{0\}$ in Theorem \ref{ThmCLTEstimators}, the quantity by which $\hat \lambda_n^{-1}$ is multiplied on the right hand side converges to $N\big(0, \lambda^2\sigma_{p}^2\big)$. Since $\hat \lambda_n$ converges to $\lambda$ (constant) by the same theorem, the conclusion follows by Slutsky's lemma.

We now prove parts (a) and (b) together under the respective assumptions. From Lemma 1.4.8 in \cite{AvdVW96} we have that joint convergence of the infinite dimensional vectors in both parts follows if we show joint convergence of all their finite dimensional projections. To show the latter let $\delta^{i,j}$ denote the Kronecker delta, i.e. the mapping from $\Z \times \Z$ to $\{0,1\}$ that equals $1$ only if $i=j$. Define $\boldsymbol{\delta}:=\big( \delta^{1,m_\lambda}, \delta^{1,m_\gamma}, \delta^{1,m_q}, \delta^{1,m_p}, \boldsymbol \delta^{(q)}, \boldsymbol \delta^{(p)}, \delta^{1,m_{N}}, \delta^{1,m_F} \big)$, where, for any $j\in \Z \setminus\{0\}$, $\boldsymbol \delta^{(q)}_j=\delta^{1,m_{q_j}}, \boldsymbol \delta^{(p)}_j=\delta^{1,m_{p_j}}$ and $m_\lambda, m_\gamma, m_q, m_p, m_{q_j}, m_{p_j}, m_{N}, m_F\in \{0,1\}$ are such that
\begin{equation*}
	m_\lambda+ m_\gamma+ m_q+ m_p+ M_q + M_p + m_{N} + m_F  \in \N, \qquad \mbox{with} \quad M_{q}:=  \sum_{\mathclap{j\in \Z\setminus\{0\}}} m_{q_j} \quad \mbox{and} \quad M_{p}:=\sum_{\mathclap{j\in \Z\setminus\{0\}}} m_{p_j}.
\end{equation*}
Then, writing $\cdotp$ for the coordinate-wise product of two infinite vectors, we denote joint convergence of a finite dimensional projection as having that as $n\to \infty$
\begin{equation*}
	\sqrt{n} \, \,  \boldsymbol \delta \, \, \cdotp \left(
	\hat \lambda_n -\lambda,\,
	h_n^{-1}(\hat \gamma_n-\gamma),\,
	\hat q_n - q,\,
	\hat p_n - p,\,
	\hat{\boldsymbol{q}}_n-\boldsymbol q ,\,
	\hat{\boldsymbol{p}}_n-\boldsymbol p ,\,
	\widehat{N}_n - N ,\,
	\widehat{F}_n - F 
	\right)
	\to^{\mathcal{L}^{\times, \delta}} \boldsymbol \delta \,  \cdotp \mathbb{L},
\end{equation*}
where $\to^{\mathcal{L}^{\times, \delta}}$ means convergence in distribution in the corresponding product space
\begin{equation*}
	\mathbb{D}=\mathbb{D}\big(m_\lambda, m_\gamma, m_q, m_p, (m_{q_j})_{j\in \Z\setminus\{0\}},  (m_{p_j})_{j\in \Z\setminus\{0\}}, m_{N}, m_F\big).
\end{equation*}
Throughout we fix all these binary parameters. To show the joint convergence displayed above we note that, under the assumptions of each of the two parts of the theorem, marginal convergence of each of the non-zero coordinates holds by Propositions \ref{Proplambdamu} and \ref{Proppsqs}, Theorem \ref{ThmNF} and the calculations regarding $q$ and $p$ at the beginning of the proof. Therefore, the sequence given by each non-zero projection is asymptotically tight and asymptotically measurable in the respective space. Then, by Lemmas 1.4.3 and 1.4.4 in \cite{AvdVW96}, the sequence of random variables given by the finite dimensional vector above taking values in $\mathbb{D}$ is asymptotically tight and asymptotically measurable. By Prohorov's theorem it is relatively compact, i.e. every subsequence has a further weakly convergent subsequence, so to finish the proof it suffices to show that all limits are the same. Denote by $\mathcal{H}$ the linear span of the functions $H: \R^{\N} \times \ell^{\infty}(\R)^2 \to \R$ of the form
\begin{align*}
	H(\boldsymbol L)= & \, \delta^{1,m_\lambda}  h^{(\lambda)}(\boldsymbol L_\lambda) + \delta^{1,m_\gamma} h^{(\gamma)}(\boldsymbol L_\gamma) + \delta^{1,m_q} h^{(q)}(\boldsymbol L_q) + \delta^{1,m_p}  h^{(p)}(\boldsymbol L_p) + \!\!\! \sum_{j\in \Z \setminus\{0\}} \!\! \!\delta^{1,m_{q_j}} h^{(q_j)}(\boldsymbol L_{q_j})\\
	& + \sum_{j\in \Z \setminus\{0\}}  \delta^{1,m_{p_j}} h^{(p_j)}(\boldsymbol L_{p_j}) + \delta^{1,m_{N}} \sum_{i=1}^{M_{N}} h^{(i)}  \big( \boldsymbol L_{N} (t_{i}) \big) + \delta^{1,m_{F}} \sum_{i=M_{N}+1}^{M_{N}+M_F} h^{(i)}  \big( L_{F} (t_{i}) \big) 
\end{align*}
for any $M_{N}, M_F\in \N$, $t_i \in \R$ and $h^{(\cdot)}\in C_b(\R)$ fixed throughout, where if a sum is empty it equals $0$ by convention. Then, for any $m_\lambda, m_\gamma, m_q, m_p, (m_{q_j})_{j\in \Z\setminus\{0\}},  (m_{p_j})_{j\in \Z\setminus\{0\}}, m_{N}, m_F$ fixed, $\mathcal{H}\subset C_b\big(\mathbb{D} \big)$ is a vector lattice containing the constant functions and separating points of $\mathbb{D}$ (see footnote $\flat$ of page 25 in \cite{AvdVW96} for the definition of these terms). We claim that, for any of the parameters here introduced and under the corresponding assumptions depending on whether $m_q+m_p=0$ or not, as $n\to \infty$
\begin{equation*}
	\sqrt{n} \left(
	\begin{array}{c}
		\boldsymbol \delta_{-N, -F} \, \cdotp \left(
		\begin{array}{c}
			\hat \lambda_n -\lambda\\
			h_n^{-1}(\hat \gamma_n-\gamma)\\
			\hat{q}_n- q \\
			\hat{p}_n-p \\
			\hat{\boldsymbol{q}}_n-\boldsymbol q \\
			\hat{\boldsymbol{p}}_n-\boldsymbol p \\
		\end{array}
		\right) \\
		\delta^{1,m_{N}} \big(\boldsymbol{\widehat{N}_n} - \boldsymbol{N} \big)\\
		\delta^{1,m_{F}} \big(\boldsymbol{\widehat{F}_n} - \boldsymbol{F} \big)
	\end{array}
	\right)
	\to \left(
	\begin{array}{c}
		\boldsymbol \delta_{-N, -F} \, \cdotp \mathbb{L}_{-N, -F} \\
		\delta^{1,m_{N}}  \left(
		\begin{array}{c}
			\mathbb{L}_{N}(t_1) \\
			\vdots \\
			\mathbb{L}_{N}(t_{M_{N}}) 
		\end{array}
		\right) \\
		\delta^{1,m_{F}}  \left(
		\begin{array}{c}
			\mathbb{L}_{F}(t_{M_{N}+1}) \\
			\vdots \\
			\mathbb{L}_{F}(t_{M_{N}+M_{F}}) 
		\end{array}
		\right)  
	\end{array}
	\right)
\end{equation*}
in distribution in $\R^{m}$, $m:=m_\lambda+ m_\gamma+ m_q+ m_p+ M_p + M_q + \delta^{1,m_{N}} M_{N}+
\delta^{1,m_{F}} M_{F}$, where subscript ${\scriptstyle \bullet}_{-N, -F}$ denotes vector ${\scriptstyle \bullet}$ without its last two coordinates, $\boldsymbol{N}=\big( N(t_1), \ldots, N(t_{M_{N}}) \big), \boldsymbol{F}=\big( F(t_{M_{N}+1}), \ldots, F(t_{M_{N}+M_{F}}) \big)$ and $\boldsymbol{\widehat{N}_n}, \boldsymbol{\widehat{F}_n}$ are the respective coordinate-wise estimators. Then the continuous mapping theorem together with Lemma 1.3.12 (ii) in \cite{AvdVW96} justify the joint convergence we are seeking. To show the claim we note that by the Cram\'er--Wold device it is sufficient to check that any linear combination of the coordinates on the left hand side converges to the same combination of the coordinates on the right hand side. For any $c_{(\lambda)}, c_{(\gamma)}, c_{(q)}, c_{(p)} \in \R$ any row vectors $\boldsymbol c_{(q)}, \boldsymbol c_{(q)} : \Z \setminus \{0\} \to \R^{\N}$ with respective $j$-th entries $c_{(q_j)}$ and $c_{(p_j)}$, any $\boldsymbol C_{N} =(c_1, \ldots, c_{M_{N}} )$ and $\boldsymbol C_{F} =(c_{M_{N}+1}, \ldots, c_{M_{N}+M_F} )$, the obvious linear combination of the left hand side arising from these parameters can be written as
\begin{equation*}
	\sqrt{n} \big( c_{(\lambda)}, c_{(\gamma)}, c_{(q)},  \boldsymbol c_{(q)},  \boldsymbol C_{N} \big) \!\!\left( \!\!\!
	\begin{array}{c}
		\delta^{1,m_\lambda} (\hat \lambda_n \!-\lambda)\\
		\delta^{1,m_\gamma} (h_n^{-1}(\hat \gamma_n\!-\!\gamma))\\
		\delta^{1,m_q} (\hat{q}_n\!- q )\\
		\boldsymbol \delta^{(q)}  \cdotp (\hat{\boldsymbol{q}}_n\!\!-\boldsymbol q) \\
		\delta^{1,m_{N}} (\boldsymbol{\widehat{N}}_n \!\!- \boldsymbol{N} )
	\end{array}
	\!\!\!\right)
	\!+ \hat{\lambda}_n^{-1} \! \sqrt{n} \big( \!-\tilde{c}_\lambda , c_{(p)},  \boldsymbol c_{(p)}, \boldsymbol C_{F}  \big)\!\!\left(\!\!
	\begin{array}{c}
		(\hat \lambda_n \!-\lambda)\\
		\delta^{1,m_p} (\hat{q}_n\!- q )\\
		\boldsymbol \delta^{(p)}  \cdotp (\hat{\boldsymbol{q}}_n\!\!-\boldsymbol q) \\ 
		\delta^{1,m_{F}} (\boldsymbol{\boldsymbol{\widehat{N}}}_n' \!\!- \boldsymbol{\boldsymbol{N}}' )\\
	\end{array}
	\!\!\right)\!\!,
\end{equation*}
where
\begin{equation*}
	\tilde{c}_{\lambda}:=\delta^{1,m_p} c_{(p)} p + \sum_{j\in \Z\setminus\{0\}} \delta^{1,m_{p_j}} c_{(p_j)} p_j + \delta^{1,m_{F}} \sum_{i=M_{N}+1}^{M_N+M_F} c_i F(t_i), 
\end{equation*}
$\boldsymbol{N}'=\big( N(t_{M_{N} +1}), \ldots, N(t_{M_{N} +M_{F}}) \big)$ and $\boldsymbol{\widehat{N}}_n'$ is its coordinate-wise estimator. To justify that the last display converges to the correct linear combination of limiting distributions we first note that by Theorem \ref{ThmCLTEstimators} the finitely many non-zero coordinates in the column vectors of the last display converge jointly to the vector comprising their respective limits. The conclusion then follows by the continuous mapping theorem and in view of the explicit representation of the variance of the limiting random variable given by Theorem \ref{ThmCLTEstimators} when  $\mathcal{T}=\{ t_1, \ldots, t_{M_{N}+M_F} \}$, $C_{(\lambda)}=\delta^{1,m_{\lambda}}c_{(\lambda)} - \lambda^{-1} \tilde{c}_{\lambda}$, $C_{(\gamma)}=c_{(\gamma)}$, $C_{(j)}= \delta^{1,m_{q}} c_{(q)} + \delta^{1,m_{q_j}} c_{(q_j)} + \lambda^{-1} \big(\delta^{1,m_{p}} c_{(p)} + \delta^{1,m_{p_j}} c_{(p_j)} \big)$, $C_{t}=\delta^{1,m_{N}} c_{t}$ if $t=t_1, \ldots, t_{M_{N}}$ and $C_{t}=\lambda^{-1} \delta^{1,m_{F}} c_{t}$ if $t=t_{M_{N}+1}, \ldots, t_{M_{N}+M_F}$.

\subsection{Proof of Lemma \ref{lemmaAsympSigma}}\label{SecProofLemma}

This lemma follows immediately from the expressions for $\FII{\varphi^{-1}(-\cdot)} $ and $P$ given in \eqref{eqFIPhim1nP}. In view of these and of the observation after \eqref{eqFinVar}, we have that for any $f$ and $g$ bounded in $\R$
\begin{align*}\label{Rep1}
	\int_{\R}\! & \Big(f \ast  {\cal F}^{-1} \left[ \varphi^{-1}(-\cdot) \right] (x)  \Big) \Big( g  \ast {\cal F}^{-1} \left[ \varphi^{-1}(-\cdot) \right] (x)\Big)\, P(dx) \notag \\
	& = e^{\lambda {\Deltaup}} \bigg(f (0) + f \ast \sum_{k=1}^\infty \bar \nu^{\ast k} \frac{(-{\Deltaup})^{k}}{k!} (0)  \bigg) \bigg( g(0) + g  \ast \sum_{k=1}^\infty \bar \nu^{\ast k} \frac{(-{\Deltaup})^{k}}{k!} (0)\bigg)  \\
	& \quad \, + e^{\lambda {\Deltaup}} \! \bigintssss_{\R}  \! \bigg( f(x) + f \ast \sum_{k=1}^\infty \bar \nu^{\ast k} \frac{(-{\Deltaup})^{k}}{k!} (x)  \bigg)  \bigg( g(x) + g  \ast \sum_{k=1}^\infty \bar \nu^{\ast k} \frac{(-{\Deltaup})^{k}}{k!} (x)\bigg)   \sum_{k=1}^{\infty} \nu^{\ast k} \frac{{\Deltaup}^k}{k!} (dx) \\
	& =   f(0) \,  g(0) + {\Deltaup} \! \left( \int_{\R} f(x) g(x) \, \nu(dx) - f(0) \, g \ast \bar \nu \, (0) - g(0) \, f \ast \bar \nu \, (0) + \lambda \, f(0)  g(0) \! \right) \! +\! O\big((\lambda {\Deltaup})^2\big).
\end{align*}




\end{document}